\documentclass[11pt,leqno]{article}
\usepackage{amsfonts}
\usepackage{amsthm,amsfonts,amssymb,amsmath,oldgerm, graphicx}
\usepackage{epsfig}
\numberwithin{equation}{section}
\usepackage[thinlines]{easybmat}


\setlength{\evensidemargin}{0in} \setlength{\oddsidemargin}{0in}
\setlength{\textwidth}{6in} \setlength{\topmargin}{0in}
\setlength{\textheight}{8in}


\renewcommand\d{\partial}

\def\eps{\varepsilon }


\renewcommand\d{\partial}

\def\eps{\varepsilon}


\newcommand\br{\begin{remark}}
\newcommand\er{\end{remark}}
\newcommand\bp{\begin{pmatrix}}
\newcommand\ep{\end{pmatrix}}
\newcommand\be{\begin{equation}}
\newcommand\ee{\end{equation}}
\newcommand\ba{\begin{equation}\begin{aligned}}
\newcommand\ea{\end{aligned}\end{equation}}


\newcommand{\bap}{\begin{app}}
\newcommand{\eap}{\end{app}}
\newcommand{\begs}{\begin{exams}}
\newcommand{\eegs}{\end{exams}}
\newcommand{\beg}{\begin{example}}
\newcommand{\eeg}{\end{exaplem}}
\newcommand{\bpr}{\begin{proposition}}
\newcommand{\epr}{\end{proposition}}
\newcommand{\bt}{\begin{theorem}}
\newcommand{\et}{\end{theorem}}
\newcommand{\bc}{\begin{corollary}}
\newcommand{\ec}{\end{corollary}}
\newcommand{\bl}{\begin{lemma}}
\newcommand{\el}{\end{lemma}}
\newcommand{\bd}{\begin{definition}}
\newcommand{\ed}{\end{definition}}
\newcommand{\brs}{\begin{remarks}}
\newcommand{\ers}{\end{remarks}}

\newtheorem{theo}{Theorem}[section]

\newtheorem{lem}[theo]{Lemma}

\newtheorem{exams}[theo]{Examples}

\numberwithin{equation}{section}

\newcommand{\D }{\mathcal{D}}

\newcommand{\const}{\text{\rm constant}}

\newtheorem{theorem}{Theorem}[section]
\newtheorem{proposition}[theorem]{Proposition}
\newtheorem{corollary}[theorem]{Corollary}
\newtheorem{lemma}[theorem]{Lemma}
\newtheorem{definition}[theorem]{Definition}

\newtheorem{example}[theorem]{Example}
\newtheorem{remark}[theorem]{Remark}


\newcommand\cA{{\cal  A}}

\newcommand\cI{{\cal  I}}



\pagestyle{headings}

\newcommand{\RM}{\mathbb{R}}









\newcommand{\beq}{\begin{equation}}
\newcommand{\eeq}{\end{equation}}



\title{Existence and stability of periodic planar standing waves in phase-transitional
elasticity with strain-gradient effects}


\author{\sc \small
Jinghua Yao\thanks{Department of Mathematics,
Indiana University Bloomington, IN 47405, USA. Email:
yaoj@indiana.edu
 }}

\date{}

\begin{document}

\maketitle

\begin{abstract}
Extending investigations of Antman \& Malek-Madani, Schecter \&
Shearer, Slemrod, Barker \& Lewicka \& Zumbrun, and others, we
investigate phase-transitional elasticity models of strain-gradient
effect. We prove the existence of non-constant planar periodic
standing waves in these models with strain-gradient effects by
variational methods and phase-plane analysis, for deformations of
arbitrary dimension and general, physical, viscosity and
strain-gradient terms. Previous investigations considered
one-dimensional phenomenological models with artificial
viscosity/strain gradient effect, for which the existence reduces to
a standard (scalar) nonlinear oscillator. For our variational
analysis, we require that the mean vector of the unknowns over one
period be in the elliptic region with respect to the corresponding
pure inviscid elastic model. For our ($1$-D) phase-plane analysis,
we have no such restriction, obtaining essentially complete
information on the existence of non-constant periodic waves and
bounding homoclinic/heteroclinic waves. Our variational framework
has implications also for time-evolutionary stability, through the
link between the action functional for the traveling-wave ODE and
the relative mechanical energy for the time-evolutionary system.
Finally, we show that spectral implies modulational nonlinear
stability by using a change of variables introduced by Kotschote to
transform our system to a strictly parabolic system to which general
results of Johnson--Zumbrun apply.  Previous such results were
confined to one-dimensional deformations in models with artificial
viscosity--strain-gradient coefficients.
\end{abstract}

\begin{center}
{\bf Keywords}: elasticity, strain-gradient effect, periodic wave,
Hamiltonian system.
\end{center}

{\bf 2010 MR Subject Classification}: 35Q74, 49A10, 49A22

\section{Introduction}

Elasticity is the typical property of elastic media. Since they have
a wide range of applications, the mathematical study of elasticity
has been an important topic (see
\cite{AB,AM,BLeZ,D,FP,K,RZ,S1,S2,S3,SS,Z1,Z2}, etc., and references
therein). However, up to now, to the best of our knowledge, the
study of phase-transitional elasticity has been carried out only for
phenomenological models \cite{S1,S2,S3,SS,Z2} for one-dimensional
shear flow, with classical double well potential and artificial
viscosity--capillarity terms. The treatment of the general,
physical, case was cited in \cite{BLeZ}, Appendix A, as an important
direction for further study.

In this paper, continuing the work of Antman and Malek-Madani
\cite{AM}, Slemrod \cite{S1,S2,S3}, Schecter and Shearer \cite{SS},
and Barker, Lewicka and Zumbrun \cite{BLeZ}, we study the existence
of planar elastic periodic traveling waves, compressible or
incompressible, for deformations of arbitrary dimensions, starting
from the most general form of the physical equations. It turns out
for general elasticity model with strain-gradient effects that,
similarly as observed for the phenomenological models studied
previously, the periodic traveling waves can only be standing waves
and the corresponding ordinary differential equation (ODE) system
exhibits Hamiltonian structure.

As we know, for a planar Hamiltonian system, we can use phase-plane
analysis to study its closed orbits. This corresponds to the case of
one-dimensional deformations, for which the unknowns are scalar in
the ODE system \eqref{twode} that we obtain. For higher dimensional
Hamiltonian systems, this method does not apply directly. In order
to prove the existence of non-constant periodic waves when the
unknowns are vectors, we consider the problem under the framework of
calculus of variation. There are several problems to overcome.
First, we need to formulate the problem in proper Banach spaces. It
turns out that the proper space for our purpose is the periodic
Sobolev space with mean zero property. Working in this framework
amounts to prescribing the mean of the unknown over one periodic (no
real restriction, since each periodic wave has a mean as long as it
exists).

Second, we need
to make sure that the waves we find are not constant waves. We overcome
this issue by considering the equations satisfied by the difference
between the original unknown and its mean. This makes the $0$
element in our working space always a critical point, which helps to
eliminate the possibility that the solutions we find are trivial.

Third, in the global model of elasticity, we need to consider the
assumption $\tau_3>0$ (see Section 2 or 3). This kind of condition
usually leads to a variational inequality and is related to an
obstacle problem. Meanwhile, this inequality condition makes our admissible
set (to make the wave physically meaningful) not weakly closed.
However, the asymptotic behavior of the elastic potential will help
overcome the related problem. We note that in the pure elastic case
without strain-gradient effects and viscosity, this restriction on
$\tau_3$ imposes significant challenges in the mathematical analysis
(see the discussions in \cite{Ba, AN}).

Besides the Hamitlonian structure of the standing wave equations, we prove that for
the general physical model, there exist non-constant periodic waves no matter whether the
unknowns are scalar or not (see Theorem 6.13) under assumptions on the mean vector and
periodic of the wave. For some specific phase-transitional models, we given explicit conditions
under which the non-constant oscillatory waves exist (see Section 7). In particular,
for the one dimensional models, we use phase-plane analysis to get detailed information on
the wave phenomena (existence of periodic, homoclinic, heteroclinic waves, see Section 8).

We address also the issue of nonlinear stability. Specifically, using a coordinate transformation introduced by Kotschote [K], we show
that the class of systems studied here are effectively strictly parabolic, in the
sense that they may be transformed to an enlarged strictly parabolic system.
This is similar in spirit to, but in practice quite different from, the change of variables introduced by Schecter
and Shearer [SS] in the special case of one-dimensional artificial viscosity/strain-gradient models.
Stability may then be treated within the general framework established for strictly parabolic systems
in [HZ, JZ] for which stability is completely understood, with the result that, up to nondegeneracy conditions,
spectral, linearized, and nonlinear stability are all equivalent, hence may be readily checked by numerical
Evans function computations.  See Appendix B for further discussion.

Comparing our results with others (see the references of this
paper), the problems here is interesting enough even only from the
modeling point of view, without even finding any waves. In
\cite{AM}, the authors treated the shear flow without the
strain-gradient effect and with isotropic assumption preventing
phase transition model (see discussion in \cite{BLeZ}). Here we
consider models with strain-gradient effects and the materials are
anisotropic, which gives rich wave phenomena. Antmann, Slemrod and
others (see \cite{A,AM,S1,S2,S3} and references therein) have
previously studied phenomenological $1$D phase-transitional models
with double-well potentials. Here we justify those types of
qualitative models by direct derivation from the physical shear flow
model; see Section \ref{s:just}.

It is well known that there are many outstanding open problems in
elasticity ranging from elastostatics to dynamics (see \cite{Ba}).
Due to strain-gradient effect
 and
associated higher regularity, we can give a neat treatment of our
problem. It would be very interesting to explore by numerics cases
that do not fit the hypotheses here (which are sufficient but by not necessary in the non-scalar case) but nonetheless support
periodic waves and also to explore either numerically or
analytically the spectral stability of these waves.  We hope to
address these issues in a followup work \cite{BYZ}. Numerical study of existence and stability of shock waves (which, since not necessarily zero-speed, are more plentiful)
would be another interesting direction for future study.

\section{Elasticity models with strain-gradient effects}

In this section, we will follow the presentations of \cite{AN, Ba,
BLeZ, NPT}. Let $\Omega$ be the reference configuration which models
an elastic body with constant temperature and density. A typical
point in $\Omega$ will be denoted by $X$. We use
$\xi:\Omega\times\mathbb{R}_+\rightarrow\mathbb{R}^3$ to denote the
deformation(i.e., the deformed position of the material point $X$).
Consequently, the deformation gradient is given by
$F:=\nabla_{X}\xi$, which we regard as an element in
$\mathbb{R}^{3\times3}$.

Adopting the notations above, the equations of isothermal elasticity
with strain-gradient effect are given through the following balance
of linear momentum
\begin{equation}\label{2.1}
\xi_{tt}-\nabla_{X}\cdot\Big(DW(\nabla\xi)+\mathcal
{Z}(\nabla\xi,\nabla\xi_t)-\mathcal {E}(\nabla^2\xi)\Big)=0.
\end{equation}

We make the following physical constraint on the deformation
gradient (see \cite{Ba, BLeZ} and \cite{AN, NPT} for the physical
background), prohibiting local self-impingement of the material:
\be\label{Fnonzero} \det F>0. \ee In \eqref{2.1}, the operator
$\nabla_X\cdot$ stands for the divergence of an approximate field.
As in \cite{D, NPT}, for a matrix-valued vector field, we use the
convention that the divergence is taken row-wise. In what follows,
we shall also use the matrix norm $|F|=(tr(F^TF))^{1/2}$, which is
induced by the inner product: $F_1:F_2:=tr(F_1^TF_2).$

In view of the second law of thermodynamics (see \cite{Ba, PB}), the
Piola-Kirchhoff stress tensor
$DW:\mathbb{R}^{3\times3}\rightarrow\mathbb{R}^{3\times3}$  is
expressed as the derivative of an elastic energy density
$W:\mathbb{R}^{3\times3}\rightarrow\mathbb{\overline{R}}_+$.
Throughout the paper, we assume as in \cite{AN, Ba, NPT} the elastic
energy density function $W$ is frame-indifference. Let $SO(3)$ be
the group of proper rotations in $\mathbb{R}^3$. Then the
frame-indifference assumption can be formulated as
\begin{equation}\label{2.3}
W(RF)=W(F),\quad\forall F\in\mathbb{R}^{3\times3},\quad \forall R\in
SO(3).
\end{equation}

Also, the material consistency (to avoid interpenetration of matter,
\eqref{Fnonzero},
[AN, Ba]) requires the following important assumption:
\begin{equation}\label{2.4}
W(F)\rightarrow+\infty\quad\mbox{as} \det F\rightarrow 0.
\end{equation}

We emphasize that viscous stress tensor
$\mathcal{Z}:\mathbb{R}^{3\times3}\times\mathbb{R}^{3\times3}\rightarrow\mathbb{R}^{3\times3}$
depends on both the deformation gradient $F$ and the velocity
gradient $Q=F_t=\nabla\xi_t=\nabla v$, where $\tau=\xi_t$. From
physical point of view, the stress tensor $\mathcal{Z}$ should also
be compatible with principles of continuum mechanics (balance of
angular momentum, frame invariance, and the Claussius-Duhem
inequality etc). For the related mathematical descriptions and
corresponding stress forms see \cite{AN, Ba, BLeZ} and references
therein.

The strain-gradient effect $\mathcal {E}$ is given by
$$\mathcal{E}(\nabla^2\xi)=\nabla_X\cdot D\Psi(\nabla^2\xi)=\Big[\sum_{i=1}^{3}\frac{\partial}{\partial X_i}\Big(\frac{\partial}{\partial(\partial_{ij}\zeta^k)}\Psi(\nabla^2\xi) \Big) \Big]_{j,k:1...3}$$
for some convex density $\Psi:
\mathbb{R}^{3\times3\times3}\rightarrow\mathbb{R}$, compatible with
frame indifference.

The corresponding inviscid part of system \eqref{2.1}
\begin{equation}\label{2.5}
\xi_{tt}-\nabla_X\cdot\Big(DW(\nabla\xi)\Big)=0
\end{equation}
can be written as
\begin{equation}\label{2.6}
(F,\tau)_t+\sum_{i=1}^{3}\partial_{X_i}\Big(\tilde{G}_i(F,\tau)\Big)=0.
\end{equation}
Above, $(F,\tau):\Omega\rightarrow\mathbb{R}^{12}$ represents
conserved quantities, while
$\tilde{G}_i:\mathbb{R}^{12}\rightarrow\mathbb{R}^{12}$ are given by
$$-\tilde{G}_i (F,\tau)=\tau^1 e_i\oplus \tau^2 e_i\oplus \tau^3 e_i\oplus\Big[\frac{\partial}{\partial F_{ki}}W(F)\Big]_{k=1}^{3},\,i=1...3$$
are the fluxes, and $e_i$ denotes the $i$-th coordinate vector in
$\mathbb{R}^3$.

The convex density $\Psi$ contributes
to equation \eqref{2.1} the term
\begin{equation}\label{2.7}
\nabla_{X}\cdot\Big( \mathcal {E}(\nabla^2\xi)\Big)=
\nabla_{X}\cdot\{\nabla_{X}\cdot D\Psi(\nabla^{2}\xi)\}.
\end{equation}
In view of the orders of differentiation and convexity of $\Psi$, we
may assume that
$$\Psi\geq 0;\quad\Psi(0)=0;\quad D\Psi(0)=0;\quad \delta Id\leq D^{2}\Psi(\cdot)\leq M Id$$
where $\delta, M$ are two positive real numbers and $Id$ is an
element in the space $\mathcal
{L}(\mathbb{R}^{3\times3\times3};\,\mathbb{R}^{3\times3\times3})$.
The mapping relations (ignoring physical constraints) are
$$\Psi: \mathbb{R}^{3\times3\times3}\rightarrow \mathbb{R}_{+};$$
$$D\Psi: \mathbb{R}^{3\times3\times3}\rightarrow\mathbb{R}^{3\times3\times3};$$
$$D^{2}\Psi: \mathbb{R}^{3\times3\times3}\rightarrow\mathcal {L}(\mathbb{R}^{3\times3\times3};\,\mathbb{R}^{3\times3\times3})$$

When the operator $\nabla_{X}\cdot$ reduces to the operator
$\partial_{x}$ where $x$ is a one dimension variable, \eqref{2.7}
takes the form $\partial_{x}\{\partial_{x}
D\Psi(\partial^{2}_{x}\xi)\}$. If we identify $\xi_{x}$ as $\tau$,
then $\partial^{2}_{x}\xi=\tau_{x}$ and $(2.7)$ becomes
$\partial_{x}\{\partial_{x}
D\Psi(\tau_x)\}=\partial_{x}\{D^{2}\Psi(\tau_x)\tau_{xx}\}$. Note
that $D^{2}\Psi:\mathbb{R}^{3}\rightarrow\mathcal
{L}(\mathbb{R}^{3};\,\mathbb{R}^{3})$ when $\nabla_{X}\cdot$ reduces
to $\partial_{x}$. So we assume that $D^{2}\Psi(\cdot)$ as matrix
function satisfy the assumption $\delta Id\leq D^{2}\Psi(\cdot)\leq
M Id\quad\mbox{as operators}$.

\section{Equations and specific models}\label{s:specific}
In this paper, we focus on the interesting subclass of planar
solutions, which are solutions in the full 3D space that depend only
on a single coordinate direction; that is, we investigate
deformations $\xi$ given by
$$\xi(X)=X+U(z),\quad X=(x,y,z),\quad U=(U_1,U_2,U_3)\in\mathbb{R}^3.$$
Corresponding to the above deformation or displacement $\xi$, the
deformation gradient with respect to $X$
\begin{equation}\label{3.1}
F=\begin{pmatrix}1 & 0 & U_{1,z}\\ 0 & 1 & U_{2,z}\\0 & 0 &
1+U_{3,z}\end{pmatrix}=\begin{pmatrix}1 & 0 & \tau_1\\ 0 & 1 &
\tau_2\\0 & 0 & \tau_3\end{pmatrix}.
\end{equation}
We shall denote $V=(\tau,u)=(\tau_1,\tau_2,\tau_3,u_1,u_2,u_3)$,
where $\tau_1=U_{1,z},\tau_2=U_{2,z},\tau_3=1+U_{3,z}$ and
$u_1=U_{1,t}, u_2=U_{2,t}, u_3=U_{3,t}$ with the physical constraint
$\tau_3>0$, corresponding to $\det F>0$ in the region of physical
feasibility of $V$.

Writing $W(\tau)=W\Big(\begin{pmatrix}1 & 0 & \tau_1\\ 0 & 1 &
\tau_2\\0 & 0 & \tau_3\end{pmatrix}\Big)$, we see that for all $F$
as in \eqref{2.1} there holds
$$\nabla_X\cdot(DW(F))=(D_{\tau}W(\tau))_z.$$
That is, the planar equations inherit a vector-valued variational
structure echoing the matrix valued variational structure (note that the left hand side is the divergence of $DW(F)$). \\

In this paper, we first study the problems (traveling wave ODEs,
Hamiltonian ODEs, existence of standing waves) for general elastic
potential energy and give a rather general abstract existence
result. Then we
study local models by specifying the related terms in system \eqref{2.1}
as follows:

\medskip

 \textbf{1. Elastic potential W.} As described
 in [FP], we shall study the phase-transitional elastic potential:
 $$W(F)=|F^TF-C_-|^2\cdot|F^TF-C_+|^2,$$
which is a potential for anisotropic material with material frame
indifference property. We study models involving this
phase-transitional elasticity potential(See Appendix A for related
computations). It is important to notice that the elastic potential
here does not satisfy the asymptotic behavior when $\det
F\rightarrow 0^+$ . Especially this is an irrelevant assumption for
shear models. Hence the related models are local models for the real
physics.
\medskip

\textbf{2. Viscous stress tensor $\mathcal{Z}$.} We use the
following tress tensor which is compatible with the principles of
continuum mechanics (see \cite{BLeZ})
$$\mathcal{Z}(F,Q)=2(\det F)sym(QF^{-1})F^{-1,T}.$$
We note that the related Cauchy stress tensor $T_2=2(\det
F)^{-1}\mathcal{Z}F^T=2sym(QF^{-1})$ is the Lagrangian version of
the stress tensor $2sym\nabla v$ written in the Eulerian
coordinates. For incompressible fluids $2div(sym\nabla v)$, giving
the usual parabolic viscous regularization of the fluid dynamics
evolutionary system.
\medskip

\textbf{3. The Strain-gradient term $\mathcal{E}$.} For the
strain-gradient effect we will choose $\Psi(P)=\frac{1}{2}|P|^2$, so
that
$\mathcal{E}(\nabla^2\xi)=\nabla_X\cdot\nabla^2\xi=\triangle_XF,$
which is an extension of the 1D case of [S1]. We see that it is the strain-gradient term that makes the models have abundant wave phenomena.\\

\textbf{The system.} As a convention, we shall use
$x\in\mathbb{R}^1$ as the space variable instead of $z$. So we have
the following system
\begin{equation}\label{3.2}
\begin{cases}
\tau_t-u_x=0;\\
u_t +\sigma(\tau)_x=(b(\tau)u_x)_x - (d(\tau_x)
\tau_{xx})_x.\end{cases}
\end{equation}
with
 \be\label{b3_1}
\hbox{\rm $\sigma:=-D_{\tau} W(\tau)$, $d(\cdot):=D^2\Psi(\cdot)=Id$
and $b(\tau)=\tau_3^{-1}\bp 1&0&0\\0&1&0\\0&0&2\ep$. } \ee

We are interested in the existence of periodic traveling waves of
the above system, which involves third order term because of the
strain-gradient effect. See also Appendix \ref{s:structure} for some
structure properties of the system.

\section{Traveling Wave ODE System}

We seek traveling wave solution of the system $(3.2)$,
$(\tau(x,t),u(x,t)):=(\tau(x-st), u(x-st))$, where $s\in\mathbb{R}$
is the wave speed. Let us denote in the following $'$ as
differentiation with respect to $x-st$. For convenience, we still
use $x$ to represent $x-st$ (Indeed, we will show a bit later that
in fact $s=0$ is necessary for the existence of periodic or
homoclinic waves; see equation (5.6)). With further investigation in
mind, we write the related equations for the general class of
elastic models with strain-gradient effects. Now from system
$(3.2)$, we have the ODE system \be
\begin{cases} -s\tau'-u'=0;\\
-su'+\sigma(\tau)'=(b(\tau)u')'-(d(\tau')\tau'')'.
\end{cases}
\ee

Plugging the first equation into the second in the above system, we
obtain the following second-order ODE in $\tau$: \be
s^2\tau'+\sigma(\tau)'=-(b(\tau)s\tau')'-(d(\tau')\tau'')'. \ee

In view of $d(\cdot)=D^2\Psi(\cdot)$, we readily see:
\be\label{4.2}
s^2\tau'+\sigma(\tau)'=-(b(\tau)s\tau')'-(D^2\Psi(\tau')\tau'')' .
\ee
Choosing a specific space point, say $x_0$, we integrate once to
get:
\be \label{4.3}
s^2\tau+\sigma(\tau)+q=-sb(\tau)\tau'-D\Psi(\tau')'
\ee
Here
$q$ is an integral constant vector. Relating this with the elastic
potential function $W$, we have
\be\label{twode}
-DW(\tau)+s^2\tau+q=-sb(\tau)\tau'-D\Psi(\tau')'
\ee

Note carefully that the integral constant vector is given by
\be\label{q}
q=\{DW(\tau)-s^2\tau-sb(\tau)\tau'-D\Psi(\tau')'\}\Big|_{x=x_0}.\ee

\section{Hamiltonian Structure}\label{s:ham}
Defining $G(P):=\langle P,D\Psi(P)\rangle-\Psi(P)$, we see that
$\frac{dG}{dP}=\langle P,D^2\Psi\rangle$. Here $P\in\mathbb{R}^n$
and $\Psi:\mathbb{R}^n\rightarrow\mathbb{R}$(for our purpose
$n=1,2,3$), $G:\mathbb{R}^n\rightarrow\mathbb{R}$ a different scalar
potential type function. Now we are ready to state a structural
property about the traveling wave ODE system \eqref{twode}.

\begin{proposition} When $s=0$, the system \eqref{twode}
is a Hamiltonian system with factor $\Big(D^2\Psi(\tau')\Big)^{-1}$,
preserving the Hamiltonian integral
$$H(\tau,\tau')=-W(\tau)+q\tau+G(\tau')\equiv\mbox{constant}.$$
Here $W$ can be taken in particular as the phase-transitional
elastic potential (see Appendix \ref{s:phase_app}
and also p. 36 of \cite{BLeZ}),
with $(a_1,a_2,a_3)=(\tau_1,\tau_2,\tau_3)$)
$$
W(\tau)=\Big(2(\tau_1^2 + \tau_2^2 + \eps^2) +( |\tau|^2-1
-\eps^2)^2 \Big)^2 - 16\eps^2 \tau_2^2,
$$
and $\Psi(p)$ as any convex function with $\Psi(0)=0$ and
$d\Psi(0)=0$, simplest case $\Psi(P)=|P|^2/2$. (Note: in this simple
case $G(P)=\Psi(P)$.)
\end{proposition}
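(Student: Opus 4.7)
The plan is to set $s=0$ in \eqref{twode}, rewrite the resulting second-order vector ODE as a first-order system in $(\tau,\tau')$, exhibit a skew-symmetric (state-dependent) Poisson structure whose ``factor'' is $\bigl(D^2\Psi(\tau')\bigr)^{-1}$, and then verify by direct differentiation that $H(\tau,\tau')=-W(\tau)+q\tau+G(\tau')$ is constant along solutions. Throughout I will use only two ingredients: the reduced ODE and the elementary identity $\nabla G(P)=D^2\Psi(P)\,P$ coming from the definition of $G$.

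First I would reduce the equation. Putting $s=0$ in \eqref{twode} yields
\begin{equation*}
D\Psi(\tau')' \;=\; DW(\tau)-q,
\end{equation*}
and since $\frac{d}{dx}D\Psi(\tau')=D^2\Psi(\tau')\,\tau''$ with $D^2\Psi$ positive definite (by the lower bound $\delta\,\mathrm{Id}\le D^2\Psi$), this can be inverted to
\begin{equation*}
\tau''\;=\;\bigl(D^2\Psi(\tau')\bigr)^{-1}\bigl(DW(\tau)-q\bigr).
\end{equation*}
Setting $v:=\tau'$, the first-order form is $\tau'=v$, $v'=(D^2\Psi(v))^{-1}(DW(\tau)-q)$.

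Next I would exhibit the Hamiltonian structure. With $H(\tau,v)=-W(\tau)+q\tau+G(v)$ I compute $\nabla_\tau H=-DW(\tau)+q$, and from $G(P)=\langle P,D\Psi(P)\rangle-\Psi(P)$ I get $\nabla_v H=\nabla G(v)=D^2\Psi(v)\,v$ (the two copies of $D\Psi(v)$ cancel, using the symmetry of $D^2\Psi$). Taking the state-dependent Poisson tensor
\begin{equation*}
J(v)\;=\;\begin{pmatrix} 0 & \bigl(D^2\Psi(v)\bigr)^{-1}\\[2pt] -\bigl(D^2\Psi(v)\bigr)^{-1} & 0\end{pmatrix},
\end{equation*}
the equation $(\tau,v)'=J(v)\,\nabla H(\tau,v)$ reproduces exactly $\tau'=(D^2\Psi(v))^{-1}D^2\Psi(v)\,v=v$ and $v'=(D^2\Psi(v))^{-1}(DW(\tau)-q)$. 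This is the precise sense in which the system is Hamiltonian with factor $(D^2\Psi(\tau'))^{-1}$.

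Finally, conservation of $H$ follows by direct calculation along any solution:
\begin{equation*}
\frac{dH}{dx}\;=\;\bigl(-DW(\tau)+q\bigr)\cdot\tau'+\nabla G(\tau')\cdot\tau''\;=\;(q-DW(\tau))\cdot\tau'+\tau'\cdot D^2\Psi(\tau')\tau'',
\end{equation*}
where I used symmetry of $D^2\Psi$ in the last term. Substituting the reduced ODE $D^2\Psi(\tau')\tau''=DW(\tau)-q$ makes the two terms cancel, giving $H\equiv\text{const}$. There is no real obstacle here; the only thing to be careful about is keeping the sign conventions consistent between the ODE, the gradient identity for $G$, and the skew-symmetric block $J(v)$. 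Specialization to the stated phase-transitional $W$ and to $\Psi(P)=|P|^2/2$ (where $G=\Psi$) is then immediate, as these only determine particular choices of $W$ and of the (then constant) factor $(D^2\Psi)^{-1}=\mathrm{Id}$.
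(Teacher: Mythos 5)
Your proposal is correct and follows essentially the same route as the paper: reduce \eqref{twode} with $s=0$ to a first-order system in $(\tau,\tau')$, use $\nabla G(P)=D^2\Psi(P)P$ to identify the equations as $\gamma$-scaled Hamilton equations with $\gamma=(D^2\Psi(\tau'))^{-1}$, and verify $dH/dx=0$ by direct cancellation. Your explicit skew-symmetric block tensor $J(v)$ is just a slightly more formal packaging of the same comparison the paper makes componentwise.
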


\begin{proof}
When $s=0$, the traveling wave ODE \eqref{twode} becomes:
\begin{equation}\label{5.1}
-dW(\tau)+q=-D\Psi(\tau')'
\end{equation}
and the constant $q=\{DW(\tau)-D\Psi(\tau')'\}\Big |_{x=x_0}$.
Noticing the positive-definiteness of $D^{2}\Psi(\cdot)$, we may
write the ODE as a first order system by regarding $\tau,\tau'$ as
independent variables:

\ba\label{visceq}
\tau'&=\tau'=[D^{2}\Psi(\tau')]^{-1}D^{2}\Psi(\tau')\tau';\\
\tau''&=-[D^{2}\Psi(\tau')]^{-1}(-DW(\tau)+q)\\
\ea

Now, consider the energy surface given by: \be\label{H*} H(\tau,
\tau'):= -W(\tau) + q\tau+ G(\tau')  . \ee We see that
\ba\label{Hvisceq}
\frac{\d}{\d\tau'}H(\tau,\tau')&=\frac{dG(\tau')}{d\tau'}=D^2\Psi(\tau')\tau';\\
\frac{\d}{\d\tau}H(\tau,\tau')&=-DW(\tau)+q.\\
\ea

Comparing \eqref{visceq}, \eqref{Hvisceq},  we see that the
traveling wave ODE is a Hamiltonian system with factor
$\gamma:=[D^{2}\Psi(\tau')]^{-1}$. Thus, \eqref{twode} preserves the
Hamiltonian $H$. We can see this also by the explicit computation,
writing $\zeta=x-st$:
\begin{align*}
\frac{d}{d\zeta}H(\tau,\tau')=&\frac{\d}{\d\tau}H(\tau,\tau')\tau' +
\frac{\d}{\d\tau'}H(\tau,\tau')\tau''\\
=&\gamma\frac{\d}{\d\tau}H(\tau,\tau')\frac{\d}{\d\tau'}H(\tau,\tau')
+
\gamma\frac{\d}{\d\tau'}H(\tau,\tau')\{-\frac{\d}{\d\tau}H(\tau,\tau')\}
=
0.
\end{align*}
\end{proof}

From the above structural information, we easily get a necessary
condition for the existence of periodic or homoclinic waves,
extending results of \cite{OZ} in a one-dimensional model case.\\

\begin{theorem}
For \eqref{twode} with $s\gtrless 0$,
there holds $dH/d\zeta\lessgtr
0$, where
\be\label{H}
H(\tau, \tau'):= -W(\tau) +
\frac{s^2}{2}|\tau|^2 +q\tau+ G(\tau') ,
\ee
so that no homoclinic
or periodic orbits can occur unless $s=0$.
\end{theorem}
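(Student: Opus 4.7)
The plan is to extend the computation already carried out in the previous proposition by tracking the two additional terms that appear once $s\ne 0$: the quadratic $\tfrac{s^2}{2}|\tau|^2$ in $H$, and the viscous term $sb(\tau)\tau'$ in \eqref{twode}. The first contributes $s^2\tau$ to $\nabla_\tau H$, which matches the $s^2\tau$ now appearing on the left of \eqref{twode}; so these two cancel in pairing, and the viscous term is what is left.

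Concretely, I would differentiate
\[
H(\tau,\tau')=-W(\tau)+\tfrac{s^2}{2}|\tau|^2+q\tau+G(\tau')
\]
along a solution of \eqref{twode}, obtaining
\[
\tfrac{d}{d\zeta}H(\tau,\tau')=\bigl(-DW(\tau)+s^2\tau+q\bigr)\cdot\tau'+\nabla_{\tau'}G(\tau')\cdot\tau''.
\]
Using the identity $\nabla_P G(P)=D^2\Psi(P)\cdot P$ already derived in the proof of Proposition~5.1, the last term equals $\tau'\cdot D^2\Psi(\tau')\tau''=\tau'\cdot(D\Psi(\tau'))'$. Substituting the ODE \eqref{twode} in the form
\[
-DW(\tau)+s^2\tau+q=-sb(\tau)\tau'-(D\Psi(\tau'))',
\]
and pairing with $\tau'$, the $(D\Psi(\tau'))'$ contributions cancel and I am left with
\[
\tfrac{d}{d\zeta}H(\tau,\tau')=-s\,\langle b(\tau)\tau',\tau'\rangle.
\]
By the explicit form of $b(\tau)$ in \eqref{b3_1} and the physical constraint $\tau_3>0$, the matrix $b(\tau)$ is strictly positive definite, so $\langle b(\tau)\tau',\tau'\rangle\geq 0$ with equality iff $\tau'=0$. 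This gives the signed monotonicity $dH/d\zeta\lessgtr 0$ for $s\gtrless 0$ claimed in the theorem.

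For the nonexistence consequence: on any periodic orbit $H$ must return to its initial value after one period, while the monotonicity above forces $H$ to be weakly monotone and strictly so on the (open) set $\{\tau'\ne 0\}$. This forces $\tau'\equiv 0$ along the orbit, i.e., the orbit is a single equilibrium, contradicting non-triviality. For a homoclinic orbit approaching an equilibrium $(\tau_*,0)$ as $\zeta\to\pm\infty$, $H$ has the same limit at both ends, so the monotone function $H\circ\gamma$ is constant, and the same argument forces $\tau'\equiv 0$.

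There is no real technical obstacle; the only thing one has to be careful about is verifying the exact cancellation between $s^2\tau\cdot\tau'$ (from $\nabla_\tau H$) and the $s^2\tau$ now present in \eqref{twode}, and checking that $\nabla_P G=D^2\Psi\cdot P$ is used in the direction that makes the $(D\Psi(\tau'))'$ terms cancel rather than double. Once these are set up correctly the identity $dH/d\zeta=-s\langle b(\tau)\tau',\tau'\rangle$ falls out immediately, and the dynamical conclusion is a standard Lyapunov-type argument.
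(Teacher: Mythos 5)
Your computation is exactly the paper's proof: differentiate $H$ along the flow, use $DG(\tau')=D^2\Psi(\tau')\tau'$ together with symmetry of the Hessian to recombine the $G$-term as $\langle (D\Psi(\tau'))',\tau'\rangle$, substitute \eqref{twode}, and conclude $dH/d\zeta=-s\langle b(\tau)\tau',\tau'\rangle$ with the sign fixed by positive definiteness of $b(\tau)$ (which uses $\tau_3>0$). The only difference is that you spell out the Lyapunov-type argument ruling out periodic and homoclinic orbits, which the paper leaves implicit; this is a harmless and correct elaboration.
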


\begin{proof}
Considering the evolution of
$\frac{d}{d\zeta}H(\tau,\tau')$ along the flow of traveling wave ODE
system \eqref{twode}, we have
\begin{align*}
\frac{d}{d\zeta}H(\tau,\tau')=&\frac{\d}{\d\tau}H(\tau,\tau')\tau'+\frac{\d}{\d\tau'}H(\tau,\tau')\tau''\\
=&\langle -D_{\tau}W(\tau)+q+s^{2}\tau, \tau' \rangle+\langle
DG(\tau'),
\tau''\rangle\\
=&\langle -D_{\tau}W(\tau)+q+s^{2}\tau, \tau' \rangle+\langle
D^{2}\Psi(\tau')\tau', \tau''\rangle\\
=&\langle -D_{\tau}W(\tau)+q+s^{2}\tau, \tau' \rangle+\langle
D^{2}\Psi(\tau')\tau'', \tau'\rangle\\
=&\langle -D_{\tau}W(\tau)+q+s^{2}\tau+D\Psi(\tau')', \tau' \rangle\\
=&\langle -sb(\tau)\tau', \tau'\rangle.
\end{align*}
The conclusion thus follows from the positive definiteness of
$b(\tau)$.
\end{proof}

{\bf The Hamiltonian system.} From the above analysis, we see that
necessarily $s=0$, i.e., all traveling periodic waves are standing.
The traveling wave ODE system reduces to the following form with an
integral constant $q$

\begin{equation}\label{5.6}
\begin{cases}-\tau''=-D_{\tau}W(\tau)+q;\\
q=\{D_{\tau}W(\tau)-\tau''\}\Big|_{x=x_0}.\end{cases}
\end{equation}

If we take the Hamiltonian point of view, the corresponding
Hamiltonian for the above system is
$$H(\tau,\tau')=\frac{1}{2}|\tau'(x)|^2+V(\tau,\tau'),$$
where $V(\tau,\tau'):=q\cdot\tau(x)-W(\tau(x))$. The periodic
solutions of the system are confined to the surface
$H(\tau,\tau')\equiv\mbox{constant}$.

In the following, we list the elastic potential and related
information for the phase-transitional models we shall deal with in
this paper for completeness and future study. To get these models,
we fix one or two directions of $\tau$ as zero, or, in the
incompressible case, $\tau_3\equiv 1$ (as described in
\cite{AM,BLeZ}, the latter is an imposed constraint, that is
compensated for in the $\tau_3$ equation by a Lagrange multiplier
corresponding to pressure). We refer the reader to
\cite{BLeZ}, Section 3, for details of the derivations of these models.\\

\subsection{2D Incompressible Shear Model} This model corresponds to
setting $\tau_3=1$.
\begin{equation}\label{5.7}
W(\tau)=\Big(2\tau_1^2 + 2(\tau_2-\eps)^2 +(|\tau|^2-\eps^2)^2 \Big)
\Big(2\tau_1^2 + 2(\tau_2+\eps)^2 +(|\tau|^2-\eps^2)^2 \Big).
\end{equation}
Its gradient components are
\begin{equation}\label{5.8}
D_{\tau_1}W(\tau)=8\tau_{1}(|\tau|^{2}+1-\varepsilon^{2})\{2(|\tau|^{2}+\varepsilon^2)+(|\tau|^{2}-\varepsilon^{2})^{2}\};
\end{equation}
\begin{equation}\label{5.9}
D_{\tau_2}W(\tau)=8\tau_{2}(|\tau|^{2}+1-\varepsilon^{2})\{2(|\tau|^{2}+\varepsilon^2)+(|\tau|^{2}-\varepsilon^{2})^{2}\}-32\tau_{2}\varepsilon^2.
\end{equation}
The Hessian components are
\begin{equation}\label{5.10}
w_{11}:=D_{\tau_1\tau_1}W(\tau)=8(|\tau|^{2}+1-\varepsilon^{2}+2\tau_1^2)\{2(|\tau|^{2}+\varepsilon^2)+(|\tau|^{2}-\varepsilon^{2})^{2}\}+32\tau_{1}^{2}(|\tau|^2+1-\varepsilon^2)^2;
\end{equation}
\begin{equation}\label{5.11}
w_{12}=w_{21}:=D_{\tau_1\tau_2}W(\tau)=16\tau_{1}\tau_{2}\{2(|\tau|^{2}+\varepsilon^2)+(|\tau|^{2}-\varepsilon^{2})^{2}\}
+32\tau_{1}\tau_{2}(|\tau|^{2}+1-\varepsilon^2)^2;
\end{equation}
\begin{equation}\label{5.12}
w_{22}:=D_{\tau_2\tau_2}W(\tau)=8(|\tau|^{2}+1-\varepsilon^2+2\tau_{2}^2)[2(|\tau|^{2}+\varepsilon^2)+(|\tau|^{2}-\varepsilon^2)^2]
+32[\tau_{2}^{2}(|\tau|^{2}+1-\varepsilon^2)^2-\varepsilon^2].
\end{equation}

\subsection{1D Shear Model I: $\tau_3\equiv1$; $\tau_2\equiv0$.} The
elastic potential becomes
\begin{equation}\label{5.13}
\begin{aligned}
W(\tau)&= \Big(2\tau_1^2 + 2\eps^2 +(\tau_1^2-\eps^2)^2 \Big)^2.
\end{aligned}
\end{equation}
The first order derivative is
\begin{equation}\label{5.14}
D_{\tau_1}W(\tau)=8\tau_{1}(\tau_1^2+1-\varepsilon^2)\{2(\tau_1^{2}+\varepsilon^2)+(\tau_1^{2}-\varepsilon^{2})^{2}\}.
\end{equation}
The second order derivative is
\begin{equation}\label{5.15}
D_{\tau_1\tau_1}W(\tau)=8(3\tau_1^{2}+1-\varepsilon^{2})\{2(\tau_1^{2}+\varepsilon^2)+(\tau_1^{2}-\varepsilon^{2})^{2}\}+32\tau_{1}^{2}(\tau_1^2+1-\varepsilon^2)^2.
\end{equation}
\subsection{1D Shear Model II: $\tau_3\equiv1;\tau_1\equiv0$.}
Correspondingly, the elastic potential becomes
\begin{equation}\label{5.16}
\begin{aligned}
W(\tau)&= \Big(2(\tau_2-\varepsilon)^2+(\tau_2^2-\eps^2)^2
\Big)\times \Big(2(\tau_2+\varepsilon)^2+(\tau_2^2-\eps^2)^2\Big)
\end{aligned}
\end{equation}
The first order derivative is
\begin{equation}\label{5.17}
D_{\tau_2}W(\tau)=8\tau_{2}(\tau_2^2+1-\varepsilon^2)\{2(\tau_2^{2}+\varepsilon^2)+(\tau_2^{2}-\varepsilon^{2})^{2}\}-32\tau_2\varepsilon^2;
\end{equation}
The second order derivative is
\begin{equation}\label{5.18}
D_{\tau_2\tau_2}W(\tau)=8(3\tau_2^{2}+1-\varepsilon^{2})\{2(\tau_2^{2}+\varepsilon^2)+(\tau_2^{2}-\varepsilon^{2})^{2}\}+32\{\tau_{2}^{2}(\tau_2^2+1-\varepsilon^2)^2-\varepsilon^2\}.
\end{equation}

\br[$1$D vs. $2$D shear solutions]\label{realrmk}
Evidently, solutions of $1$-D shear models I or II determine solutions of
the full $2$
 shear model obtained by adjoining
$ \tau_2\equiv 0$ or $\tau_1\equiv 0$ respectively. It is worth
noting that the structure $dW(\tau)=\tau f(|\tau|)+
c(0,\tau_2)^T$
for $f$ a scalar-valued function and $c$ a scalar constant yields
that the only solutions $\tau(x)$
of \eqref{5.6} with $\eta\cdot \tau\equiv c_2=\const$
for some constant vector $\eta$ are those satisfying
$c_2 f(|\tau|)+c\eta_2\tau_2 \equiv \const$, which gives by direct
computation $\tau\equiv \const$
or else $\eta \cdot \tau\equiv 0$ and $\eta_2\tau_2\equiv 0$,
in which case $\tau_1\equiv 0$ or $\tau_2\equiv 0$.
That is, the $1$D systems derived here are the {\rm only}
solutions of the $2$D shear model that are not genuinely two-dimensional
in the sense that they are confined to a line in the $\tau$-plane.
In particular, if the mean of $\tau_1$ or $\tau_2$ over one period
is not zero, then we can be sure that the solution is genuinely two-dimensional.
\er

\subsection{1D Compressible Model III} In this case $\tau_1=\tau_2\equiv 0$
and we denote $\tau=\tau_3$.
The potential and its derivatives are given below.
The elastic potential becomes
\begin{equation}\label{5.19}
\begin{aligned}
W(\tau)&= \Big(2\eps^2 +(\tau_3^2-1-\eps^2)^2 \Big)^2
\end{aligned}
\end{equation}
\begin{figure}

\end{figure}
 The first order derivative is
\begin{equation}\label{5.20}
D_{\tau_3}W(\tau)=8\tau_{3}(\tau_3^{2}-1-\varepsilon^{2})\{2\varepsilon^2+(\tau_3^{2}-1-\varepsilon^{2})^{2}\};
\end{equation}
And the second order derivative is
\begin{equation}\label{5.21}
w_{33}:=D_{\tau_3\tau_3}W(\tau)=8(3\tau_3^2-1-\varepsilon^2)\{2\varepsilon^2+(\tau_3^{2}-1-\varepsilon^{2})^{2}\}
+32\tau_{3}^2(\tau_3^{2}-1-\varepsilon^2)^2.
\end{equation}

\subsection{2D compressible models.}

First, we consider the case
$\tau=(\tau_2,\tau_3)^T\in\mathbb{R}^2_+$. The elastic potential $W$
and derivatives are as follows.
\begin{equation}\label{5.22}
W(\tau)= \Big(2(\tau_2-\eps)^2 +(|\tau|^2-1-\eps^2)^2
\Big)\Big(2(\tau_2+\eps)^2 +(|\tau|^2-1-\eps^2)^2 \Big);
\end{equation}
The gradient components are
\begin{equation}\label{5.23}
D_{\tau_2}W(\tau)=8\tau_{2}(|\tau|^{2}-\varepsilon^{2})\{2(\tau_2^{2}+\varepsilon^2)+(|\tau|^{2}-1-\varepsilon^{2})^{2}\}-32\tau_{2}\varepsilon^2.
\end{equation}
\begin{equation}\label{5.24}
D_{\tau_3}W(\tau)=8\tau_{3}(|\tau|^{2}-1-\varepsilon^{2})\{2(\tau_2^{2}+\varepsilon^2)+(|\tau|^{2}-1-\varepsilon^{2})^{2}\};
\end{equation}
Similarly, we have the Hessian components
\begin{equation}\label{5.25}
w_{22}:=D_{\tau_2\tau_2}W(\tau)=8(|\tau|^{2}-\varepsilon^2+2\tau_{2}^2)[2(\tau_2^{2}+\varepsilon^2)+(|\tau|^{2}-1-\varepsilon^2)^2]
+32[\tau_{2}^{2}(|\tau|^{2}-\varepsilon^2)^2-\varepsilon^2].
\end{equation}
\begin{equation}\label{5.26}
w_{23}=w_{32}:=D_{\tau_3\tau_2}W(\tau)=16\tau_{2}\tau_{3}\{2(\tau_2^{2}+\varepsilon^2)+(|\tau|^{2}-1-\varepsilon^{2})^{2}\}
+32\tau_{2}\tau_{3}(|\tau|^{2}-\varepsilon^2)(|\tau|^{2}-1-\varepsilon^2)
\end{equation}
\begin{equation}\label{5.27}
w_{33}:=D_{\tau_3\tau_3}W(\tau)=8(|\tau|^2-1-\varepsilon^2+2\tau_3^2)\{2(\tau_2^{2}+\varepsilon^2)+(|\tau|^{2}-1-\varepsilon^{2})^{2}\}
+32\tau_{3}^2(|\tau|^{2}-1-\varepsilon^2)^2
\end{equation}

Second, if we fix the $\tau_2$ direction and let
$\tau:=(\tau_1,\tau_3)^T$, we get another 2D compressible model. We
omit the details here as the form is obvious.

\subsection{The full 3D model.} In this case $\tau=(\tau_1,\tau_2,\tau_3)^T\in\mathbb{R}^3_+$. corresponding to the
phase-transitional elastic potential function $W$, we list the
components of $D^2W(\tau):=(w_{ij})_{3\times3}$.
\begin{equation}\label{5.28}
w_{11}=8(|\tau|^{2}+2\tau_{1}^{2}-\varepsilon^2)\{2(|\tau|^{2}-\tau_{3}^{2}+\varepsilon^2)+(|\tau|^{2}-1-\varepsilon^{2})^{2}\}
+32\tau_{1}^{2}(|\tau|^{2}-\varepsilon^2)^2
\end{equation}

\begin{equation}\label{5.29}
w_{12}=w_{21}=16\tau_{1}\tau_{2}\{2(|\tau|^{2}-\tau_{3}^{2}+\varepsilon^2)+(|\tau|^{2}-1-\varepsilon^{2})^{2}\}
+32\tau_{1}\tau_{2}(|\tau|^{2}-\varepsilon^2)^2
\end{equation}

\begin{equation}\label{5.30}
w_{13}=w_{31}=16\tau_{1}\tau_{3}\{2(|\tau|^{2}-\tau_{3}^{2}+\varepsilon^2)+(|\tau|^{2}-1-\varepsilon^{2})^{2}\}
+32\tau_{1}\tau_{3}(|\tau|^{2}-\varepsilon^2)(|\tau|^{2}-1-\varepsilon^2)
\end{equation}

\begin{equation}\label{5.31}
w_{23}=w_{32}=16\tau_{2}\tau_{3}\{2(|\tau|^{2}-\tau_{3}^{2}+\varepsilon^2)+(|\tau|^{2}-1-\varepsilon^{2})^{2}\}
+32\tau_{2}\tau_{3}(|\tau|^{2}-\varepsilon^2)(|\tau|^{2}-1-\varepsilon^2)
\end{equation}

\begin{equation}\label{5.32}
w_{33}=8(|\tau|^{2}+2\tau_{3}^{2}-1-\varepsilon^2)\{2(|\tau|^{2}-\tau_{3}^{2}+\varepsilon^2)+(|\tau|^{2}-1-\varepsilon^{2})^{2}\}
+32\tau_{3}^{2}(|\tau|^{2}-1-\varepsilon^2)^{2}
\end{equation}

\begin{equation}\label{5.33}
w_{22}=8(|\tau|^{2}-\varepsilon^2+2\tau_{2}^2)[2(\tau_{1}^{2}+\tau_{2}^{2}+\varepsilon^2)+(|\tau|^{2}-1-\varepsilon^2)^2]
+32[\tau_{2}^{2}(|\tau|^{2}-\varepsilon^2)^2-\varepsilon^2].
\end{equation}

\br\label{realrmk2}
Similarly as in Remark \ref{realrmk}, we find that solutions of the
$2$D compressible models are genuinely two-dimensional, in the sense
that they are not confined to a line in the $\tau$-plane, unless
they are solutions of the $1$D model derived above: in particular,
the mean over one period of $(\tau_1,\tau_2)$ is zero.
Likewise, solutions of the full $3$D compressible model are genuinely
three-dimensional in the sense that they are not confined to a plane,
unless they are solutions of one of the $2$D models derived above, in
particular, the mean of $\tau_1$ or of $\tau_2$ over one period is zero.
\er

\subsection{Justification of phenomenological models}\label{s:just}
We note that, for the case of $1$-D shear flow, the coefficients
given by \eqref{b3_1} become $b,d\equiv \const$, and the elastic
potential $W$ is of a generalized double-well form. Thus, we recover
from first principles the type of phenomenological model studied in
\cite{S1,S2,S3,SS,Z2}, though with a slightly modified potential
refining the quartic double-well potential assumed in the
phenomenological models. The $2$-D shear flow gives a natural
extension to multi-dimensional deformations, which is also
interesting from the pure Calculus of Variations point of view (see
the following section), as a physically relevant example of a
vectorial ``real Ginzberg--Landau'' problem of the type studied on
abstract grounds by many authors. Finally, we note that the various
compressible models give a different extension of the
phenomenological models, to the case of ``real'' or nonconstant
viscosity.

\section{Calculus of Variations.}
In this section, we formulate the problem in the framework of
Calculus of Variations and give the proof of the existence result.
\subsection{Space structure}
As a first step, we recall the notions of Sobolev spaces involving
periodicity and introduce the space structure we are going to use
(see \cite{MW}). For fixed real number $T>0$, let $C^{\infty}_T$ be
the space of infinitely differentiable $T$-periodic functions from
$\mathbb{R}$ to
$\mathbb{R}^n$ (for our purpose $n=1,2,3$.).\\

\begin{lem} Let $u,v\in L^1 (0,T;\mathbb{R}^n)$. If the following holds:
for every $f\in C^{\infty}_T$,
$$\int_{0}^T (u(t), f'(t))dt=-\int_0^T (v(t), f(t))dt,$$
then
$$\int_0^T v(s)ds=0$$
and there exists a constant vector $c$ in $\mathbb{R}^N$ such that
$$u(t)=\int_0^t v(s)ds+c\quad\mbox{a.e. on} [0, T].$$
The function $v:=u'$ is called the \textbf{weak derivative of u}.
Consequently, we have
$$u(t)=\int_0^t u'(l)dl+c,$$
which implies the following:
$$u(0)=u(T)=c;$$
$$u(t)=u(s)+\int_s^t u'(l)dl.$$
\end{lem}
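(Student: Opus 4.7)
The plan is to prove the two assertions of the lemma by a standard weak-derivative / fundamental-lemma-of-the-calculus-of-variations argument, adapted to the periodic setting.

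First I would obtain $\int_0^T v(s)\,ds=0$ by plugging in constant test functions: for each standard basis vector $e_i\in\mathbb{R}^n$, the constant map $f\equiv e_i$ belongs to $C^\infty_T$ and has $f'\equiv 0$, so the hypothesis collapses to $0=-\int_0^T (v(t),e_i)\,dt$, which gives the coordinate-wise conclusion. Having this, set
\[
w(t):=\int_0^t v(s)\,ds.
\]
Then $w$ is absolutely continuous on $[0,T]$, $w(0)=w(T)=0$ (the second equality is precisely the first claim), and $w'(t)=v(t)$ a.e.

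Next I would compare $u$ with $w$. For an arbitrary $f\in C^\infty_T$, integration by parts for absolutely continuous functions together with $w(0)=w(T)=0$ yields
\[
\int_0^T (w(t),f'(t))\,dt=-\int_0^T (w'(t),f(t))\,dt=-\int_0^T (v(t),f(t))\,dt.
\]
Subtracting this from the hypothesis gives, for all $f\in C^\infty_T$,
\[
\int_0^T (u(t)-w(t),f'(t))\,dt=0.
\]

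The key remaining step is to deduce from this that $u-w$ is a.e. constant. The observation is that a smooth $T$-periodic map $\varphi$ can be written as $f'$ for some $f\in C^\infty_T$ if and only if $\int_0^T\varphi=0$ (take $f(t)=\int_0^t\varphi(s)\,ds$, which is periodic exactly when $\varphi$ has zero mean). Thus, letting $g:=u-w\in L^1(0,T;\mathbb{R}^n)$, the identity above is equivalent to
\[
\int_0^T (g(t),\varphi(t))\,dt=0\quad\text{for all }\varphi\in C^\infty_T\text{ with }\int_0^T\varphi=0.
\]
Setting $\bar g:=\tfrac{1}{T}\int_0^T g$ and writing any $\psi\in C^\infty_T$ as $\psi=(\psi-\bar\psi)+\bar\psi$, a direct check shows $\int_0^T (g-\bar g,\psi)\,dt=0$ for every $\psi\in C^\infty_T$, and the classical fundamental lemma of the calculus of variations (applied coordinate-wise, using density of $C^\infty_T$ in $L^\infty$ in the appropriate sense) forces $g\equiv\bar g$ a.e. Taking $c:=\bar g$ yields $u(t)=w(t)+c=\int_0^t v(s)\,ds+c$ a.e.

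The ``consequently'' part is then immediate: the right-hand side has a continuous representative, so $u(0)=c$ and $u(T)=c+\int_0^T v=c$ give $u(0)=u(T)=c$, and subtraction gives $u(t)-u(s)=\int_s^t v(l)\,dl=\int_s^t u'(l)\,dl$. I expect the only mildly delicate point to be the legitimacy of integration by parts for the absolutely continuous $w$, and the passage from ``orthogonal to every mean-zero smooth periodic function'' to ``constant a.e.''; both are standard but worth spelling out carefully.
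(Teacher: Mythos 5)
Your argument is correct and complete. The first half (testing against the constant functions $f\equiv e_i$ to get $\int_0^T v=0$) is exactly what the paper does. For the second half, however, you take a genuinely different route from the one the paper sketches: the paper's proof is only a two-line pointer, suggesting that one expand the test function $f$ in a Fourier series and interchange integrals via Fubini's theorem (following Mawhin--Willem), whereas you introduce the primitive $w(t)=\int_0^t v(s)\,ds$, integrate by parts using $w(0)=w(T)=0$ to reduce to $\int_0^T(u-w,f')\,dt=0$ for all $f\in C^\infty_T$, characterize the image of $f\mapsto f'$ as the mean-zero smooth periodic functions, and then invoke the du Bois-Reymond/fundamental lemma of the calculus of variations to conclude $u-w$ is a.e.\ constant. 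Your version is more elementary and self-contained (it needs only the classical fundamental lemma for $L^1$ functions, which already follows from testing against $C^\infty_c(0,T)\subset C^\infty_T$), and it makes explicit the two points the paper leaves implicit --- the legitimacy of integration by parts for the absolutely continuous $w$ and the passage from ``orthogonal to every mean-zero periodic test function'' to ``constant a.e.'' The Fourier route the paper gestures at buys a slightly slicker bookkeeping of the periodicity (the constant mode is automatically separated from the oscillatory modes), but yields nothing more in the end; both proofs establish the identical statement, and the concluding assertions ($u(0)=u(T)=c$, the two-point formula) follow in the same immediate way from the integral representation in either approach.
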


\begin{proof}
For the mean zero property, we could consider the
specific test function $f=e_j$. For the integral formulation, we could consider
the use of Fubini Theorem and Fourier expansion of $f$ to
conclude (\cite{MW}).
\end{proof}

Define the Hilbert space $H^1_T$ as usual (hence reflexive Banach
space) with this inner product and corresponding norm: for $u,v\in
H^1_T$,
$$\langle u,v\rangle:=\int_0^T (u, v)+(u', v')ds;$$
$$\|u\|^2:=\int_0^T|u|^2+|u'|^2 ds.$$\\

Next, we collect some facts for later use.

\begin{proposition} (Compact Sobolev Embedding property)
$H^1_T\subset\subset C[0,T]\mbox{compactly}$.
\end{proposition}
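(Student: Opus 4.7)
The plan is to prove first the continuous embedding $H^1_T \hookrightarrow C[0,T]$ and then upgrade it to compactness via the Arzelà--Ascoli theorem. Since the preceding lemma gives for $u\in H^1_T$ the representation $u(t)=u(s)+\int_s^t u'(l)\,dl$ with $u(0)=u(T)$, the natural continuous representative is at hand from the start, so the embedding is well-defined on representatives without further ado.

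First I would establish equicontinuity: for any $u\in H^1_T$ and any $0\le s\le t\le T$, the integral representation combined with the Cauchy--Schwarz inequality yields
\[
|u(t)-u(s)| \;\le\; \int_s^t |u'(l)|\,dl \;\le\; |t-s|^{1/2}\,\|u'\|_{L^2(0,T)} \;\le\; |t-s|^{1/2}\,\|u\|_{H^1_T}.
\]
Next I would obtain a uniform sup-bound. Using the mean-value theorem (or the continuity plus periodicity) there exists $s_0\in[0,T]$ with $|u(s_0)|\le T^{-1/2}\|u\|_{L^2}$; combining this with the equicontinuity estimate above gives
\[
\|u\|_\infty \;\le\; T^{-1/2}\|u\|_{L^2} + T^{1/2}\|u'\|_{L^2} \;\le\; C_T\,\|u\|_{H^1_T},
\]
which already proves the continuous embedding $H^1_T\hookrightarrow C[0,T]$.

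To pass to compactness, let $\{u_n\}\subset H^1_T$ be bounded, say $\|u_n\|_{H^1_T}\le M$. By the sup-bound the sequence is uniformly bounded in $C[0,T]$, and by the Hölder estimate $|u_n(t)-u_n(s)|\le M|t-s|^{1/2}$ the sequence is uniformly equicontinuous. The Arzelà--Ascoli theorem then provides a subsequence converging uniformly on $[0,T]$, proving the embedding is compact.

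I expect no significant obstacle: the entire argument rests on the periodic representation formula from the preceding lemma, the Cauchy--Schwarz inequality, and Arzelà--Ascoli. The only point requiring a little care is producing a point $s_0$ where $|u(s_0)|$ is controlled by $\|u\|_{L^2}$ in order to anchor the sup-norm bound, but this is immediate from the continuity of $u$ and the integral bound $\int_0^T|u|^2\ge T\min_{[0,T]}|u|^2$.
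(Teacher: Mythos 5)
Your argument is correct: the representation formula from the preceding lemma plus Cauchy--Schwarz gives the uniform H\"older-$1/2$ bound and the sup-norm estimate, and Arzel\`a--Ascoli then yields compactness; the anchoring of the sup-norm at a point $s_0$ with $|u(s_0)|\le T^{-1/2}\|u\|_{L^2}$ is handled properly. The paper itself offers no proof of this proposition, simply citing it as a standard fact from Mawhin--Willem \cite{MW}, and the argument given there is essentially the one you wrote, so there is nothing to compare beyond noting that you have supplied the standard proof the paper omits.
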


\begin{proposition} If $u\in H^1_T$ and
$(1/T)\int_0^Tu(t)\,dt=0$, then we have Wirtinger's inequality
$$\int_0^T|u(t)|^2\,dt\leq(T^2/4\pi^2)\int_0^T|u'(t)|^2\,dt$$
and a Sobolev inequality
$$|u|^2_{\infty}\leq (T/12)\int_0^T|u'(t)|^2\,dt.$$
\end{proposition}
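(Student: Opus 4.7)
Both inequalities will be established by the Fourier series method, which is very natural given that the mean-zero hypothesis is most cleanly expressed in terms of the vanishing of a single Fourier coefficient. Since $u\in H^1_T$, write the Fourier expansion
\[
u(t)=\sum_{k\in\mathbb{Z}} c_k e^{2\pi i k t/T},\qquad c_k\in\mathbb{C}^n,
\]
which converges in $L^2$, and note that termwise differentiation gives $u'(t)=\sum_{k}(2\pi ik/T) c_k e^{2\pi i kt/T}$ in $L^2$, by the characterization of weak derivatives in Lemma 6.1 combined with the identification $\int_0^T u(t) \overline{e^{2\pi i kt/T}}\,dt\cdot (2\pi ik/T)=-\int_0^T u(t)\partial_t\overline{e^{2\pi i kt/T}}\,dt$. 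Parseval's identity then yields
\[
\int_0^T |u(t)|^2\,dt=T\sum_{k\in\mathbb{Z}}|c_k|^2,\qquad
\int_0^T |u'(t)|^2\,dt=T\sum_{k\in\mathbb{Z}}\frac{4\pi^2 k^2}{T^2}|c_k|^2.
\]
The mean-zero hypothesis $(1/T)\int_0^T u=0$ is precisely $c_0=0$.

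For Wirtinger's inequality, the plan is simply to use $|k|\ge 1$ for all $k$ contributing to the sum: this yields $\sum_{k\ne 0}|c_k|^2\le \sum_{k\ne 0} k^2 |c_k|^2$, and substituting into the Parseval identities above delivers the stated inequality with the sharp constant $T^2/4\pi^2$.

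For the Sobolev inequality, I would write $|u(t)|\le \sum_{k\ne 0}|c_k|$ and apply Cauchy--Schwarz with the split $|c_k|=(1/|k|)\cdot (|k|\,|c_k|)$:
\[
|u(t)|^2 \le \Bigl(\sum_{k\ne 0}\frac{1}{k^2}\Bigr)\Bigl(\sum_{k\ne 0}k^2|c_k|^2\Bigr)
= \frac{\pi^2}{3}\cdot \frac{T}{4\pi^2}\int_0^T|u'|^2\,dt = \frac{T}{12}\int_0^T|u'(t)|^2\,dt,
\]
where I used $\sum_{k\ne 0} k^{-2}=2\zeta(2)=\pi^2/3$ and the Parseval expression for $\|u'\|_{L^2}^2$. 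Taking the supremum over $t\in[0,T]$ (noting the bound is independent of $t$) gives the claim.

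There is no real obstacle to these computations; the only small points requiring care are (i) the justification that the Fourier series of $u$ converges uniformly, so that the pointwise bound on $|u(t)|$ is meaningful --- this follows from $\sum_{k\ne 0}|c_k|\le (\pi^2/3)^{1/2}(\sum k^2|c_k|^2)^{1/2}<\infty$, which we establish in the course of the Sobolev estimate itself --- and (ii) the termwise differentiation identity for $u'$, which follows from Lemma 6.1 applied to each trigonometric test function. Both are standard and can be handled in a line or two.
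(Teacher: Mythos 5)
Your proof is correct: the Parseval identities, the bound $1\le k^2$ for $k\ne 0$, and the Cauchy--Schwarz step with $\sum_{k\ne 0}k^{-2}=\pi^2/3$ all check out and yield exactly the constants $T^2/4\pi^2$ and $T/12$, and you rightly flag the uniform convergence needed to make the pointwise bound legitimate. The paper itself gives no proof of this proposition --- it simply cites Mawhin--Willem --- and your Fourier-series argument is the standard one given there, so there is nothing to compare beyond noting agreement.
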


The Compact Sobolev imbedding property will give us the required
weak lower semi-continuity property for the nonlinear functionals.
The Wirtinger's inequality supplies us equivalent norms in related
Sobolev spaces with mean zero property (see \cite{MW} for complete
proofs).

\subsection{Variational formulation of the problems}

Now for a given real positive constant $T$, we consider problem
\eqref{5.6} in $H^1_T$
\begin{equation*}
\begin{cases}
-\tau''=-D_{\tau}W(\tau)+q=-D_{\tau}(W(\tau)-q\cdot\tau);\\
\tau(0)-\tau(T)=0;\tau'(0)-\tau'(T)=0.
\end{cases}
\end{equation*}

Let us first consider the cases and formulations without the
physical restriction $\tau_3>0$. Assume that:
$$\bar\tau:=\frac{1}{T}\int_0^T \tau(x)dx=m.$$
Here $m\in \mathbb{R}^n$, $n=1,2,3$ and we will use bar to represent
mean over one period similarly. Hence, we consider the following
problem

\begin{equation}\label{6.1}
\begin{cases}
\tau''(x)=DW(\tau)-q;\\
\tau(0)=\tau(T);\tau'(0)=\tau'(T);\\
\frac{1}{T}\int_0^T \tau(x)\,dx=m.
\end{cases}
\end{equation}

If we seek periodic solutions, $q$ can be determined by integrating
the equations above over one period; that is,
$$q=\frac{1}{T}\int_0^T DW(\tau(x))\,dx.$$

Define $v(x)=\tau(x)-m$. We see easily that $\frac{1}{T}\int_0^T
v(x)\,dx=0,$ and $v(x)$ satisfies the system of equations:

\begin{equation}\label{6.2}
\begin{cases}
v''(x)=DW(v+m)-q;\\
v(0)=v(T);v'(0)=v'(T);\\
\frac{1}{T}\int_0^T v(x)\,dx=0.
\end{cases}
\end{equation}

For convenience, we rewrite the above system as
\begin{equation}\label{6.3}
\begin{cases}
v''(x)=DW(v+m)-DW(m)+DW(m)-q;\\
v(0)=v(T);v'(0)=v'(T);\\
\frac{1}{T}\int_0^T v(x)\,dx=0.
\end{cases}
\end{equation}

Define $\tilde{W}(v)=W(v+m)-DW(m)\cdot v$ and $\tilde{q}=q-DW(m)$.
We get the following problem

\begin{equation}\label{6.4}
\begin{cases}
v''(x)=D\tilde{W}(v)-\tilde{q};\\
v(0)=v(T);v'(0)=v'(T);\\
\frac{1}{T}\int_0^T v(x)\,dx=0.
\end{cases}
\end{equation}
Here $\tilde{q}$ is determined by integration: $\frac{1}{T}\int_0^T
D\tilde{W}(v)\,dx=\tilde{q}$.

\begin{remark}\label{rmk6.3}
We require $v_3>-m_3$ on $[0, T]$ for models involving $\tau_3$
direction in view of the physical assumption $(2.2)$.
\end{remark}

Define $F(v)=W(v+m)-W(m)-DW(m)\cdot v$ and introduce the functional

\begin{equation}
\cI(v)=\int_0^T\frac{1}{2}|v'|^2\,dx+\int_0^T F(v)\,dx
\end{equation}
on the space
$$H^1_{T,0}:=\{v\in H^1_T; \bar{v}=\frac{1}{T}\int_0^T v\,dx=0\}.$$

\begin{proposition} $0$ is always a critical point of the functional $\cI$ defined
above on $H^1_{T,0}$.
\end{proposition}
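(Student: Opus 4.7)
The plan is to show that the Gateaux derivative of $\cI$ at $v=0$ vanishes along every admissible direction $\phi \in H^1_{T,0}$. First I would verify that $\cI$ is Gateaux (in fact Fr\'echet) differentiable on $H^1_{T,0}$: since $W$ is a polynomial, so is $F(v) = W(v+m) - W(m) - DW(m)\cdot v$, and the compact embedding $H^1_T \hookrightarrow C([0,T])$ from Proposition~6.2 gives a uniform $L^\infty$ bound on $v$, allowing a routine dominated-convergence argument on the difference quotients of both the Dirichlet term and the potential term.

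Next, a direct computation gives the derivative at a general $v$,
\begin{equation*}
\cI'(v)[\phi] = \int_0^T v'\cdot \phi'\,dx + \int_0^T \bigl(DW(v+m) - DW(m)\bigr)\cdot\phi\,dx,
\end{equation*}
where I have used $DF(v) = DW(v+m) - DW(m)$. Evaluating at $v \equiv 0$: the first integrand is identically zero because $v' \equiv 0$, and the second integrand vanishes pointwise because $DW(0+m) - DW(m) = 0$. Hence $\cI'(0)[\phi] = 0$ for every $\phi \in H^1_{T,0}$, so $0$ is a critical point of $\cI$.

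I do not anticipate a genuine obstacle in this step; the argument is essentially bookkeeping. The key observation is that the linear correction $-DW(m)\cdot v$ in the definition of $F$ was introduced precisely so that $DF(0) = 0$. This corresponds on the PDE side to absorbing the constant $DW(m)$ into the redefined Lagrange multiplier $\tilde q = q - DW(m)$ of \eqref{6.4}, which is exactly the mechanism that renders the trivial function $v\equiv 0$ (corresponding to the constant deformation $\tau \equiv m$) a critical point of the shifted variational problem for every choice of mean $m$. Note also that the mean-zero constraint causes no complication here, since $\cI'(0)$ vanishes identically on all of $H^1_T$, not merely on $H^1_{T,0}$; no Lagrange multiplier from the constraint needs to be tracked at this stage. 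The significance of the proposition, of course, is not that $0$ is critical, but that any critical point one subsequently finds by minimization or mountain-pass must be shown to differ from this trivial one — a task handled in the next stage of the variational argument.
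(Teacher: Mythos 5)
Your proof is correct and follows essentially the same route as the paper: compute $\cI'(v)[\phi]=\int_0^T v'\cdot\phi'+D\tilde W(v)\cdot\phi\,dx$ with $D\tilde W(v)=DW(v+m)-DW(m)$, then observe that both terms vanish at $v=0$ since $D\tilde W(0)=0$. The extra remarks on Fr\'echet differentiability and the role of the linear correction $-DW(m)\cdot v$ are sound but not needed beyond what the paper already records.
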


\begin{proof}
It is easy to verify that for $\phi\in H^1_{T,0}$,
there holds
$$I'(v)\phi=\int_0^T v'\cdot\phi'+D\tilde{W}(v)\cdot\phi\,dx.$$
Taking $v=0$ and noticing that $D\tilde{W}(0)=0$, we get the desired
result.
\end{proof}

\begin{remark} For any Hamiltonian ODE, there exists such an equivalent variational, or ``Lagrangian'' formulation,
according to the principle of least action; see \cite{Lan}. By our
formulation, we make $0$ always a critical point and it corresponds
to the constant solution. This geometric property supplies us nice
way to exclude the possibility that the periodic solution we find is
constant, i.e., to help prove the periodic waves we find are
oscillatory.
\end{remark}

\begin{proposition} Without physical restriction on $\tau_3$, the critical point
of $\cI$ corresponds to the solution of $(5.6)$.
\end{proposition}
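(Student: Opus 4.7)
The plan is to take the first variation of $\cI$ over $H^1_{T,0}$, observe that the mean-zero constraint on test functions forces the Euler--Lagrange equation to hold only up to a constant Lagrange multiplier vector, identify that multiplier by averaging, and then unwind the substitutions $v=\tau-m$, $\tilde W(v)=W(v+m)-DW(m)\cdot v$, $\tilde q=q-DW(m)$ to recover exactly the system (5.6).

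First, from the preceding proposition the Gâteaux derivative at a critical point $v\in H^1_{T,0}$ satisfies
\[
0=\cI'(v)\phi=\int_0^T\bigl(v'\cdot\phi'+D\tilde W(v)\cdot\phi\bigr)\,dx
\qquad\text{for all }\phi\in H^1_{T,0}.
\]
Given an arbitrary $\psi\in H^1_T$, decompose $\psi=\bar\psi+(\psi-\bar\psi)$ with $\psi-\bar\psi\in H^1_{T,0}$. Set the constant vector
\[
\tilde q:=\frac{1}{T}\int_0^T D\tilde W(v)\,dx,
\]
which satisfies $\int_0^T D\tilde W(v)\cdot\bar\psi\,dx=\int_0^T \tilde q\cdot\psi\,dx$. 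Substituting $\phi=\psi-\bar\psi$ in the critical-point relation and using $\int_0^T v'\cdot\bar\psi'\,dx=0$, I obtain
\[
0=\int_0^T\bigl(v'\cdot\psi'+(D\tilde W(v)-\tilde q)\cdot\psi\bigr)\,dx
\qquad\text{for every }\psi\in H^1_T.
\]

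Next, since $D\tilde W(v)-\tilde q\in L^2(0,T)$, this identity says $v'$ has a weak derivative in $L^2$, so $v\in H^2_T$; smoothness of $W$ then permits bootstrap to the regularity needed. Periodicity of $\psi$ (so boundary terms at $0$ and $T$ match) and integration by parts give
\[
\int_0^T\bigl(-v''+D\tilde W(v)-\tilde q\bigr)\cdot\psi\,dx=0
\qquad\text{for all }\psi\in H^1_T,
\]
hence $-v''+D\tilde W(v)=\tilde q$ a.e. The trace condition $v(0)=v(T)$ is built into $H^1_T$; then $v\in H^2$ makes $v'$ continuous and periodic, yielding $v'(0)=v'(T)$. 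Setting $\tau:=v+m$, we have $D\tilde W(v)=DW(\tau)-DW(m)$ and $q:=\tilde q+DW(m)=\frac{1}{T}\int_0^T DW(\tau)\,dx$, so the equation reads $-\tau''=-DW(\tau)+q$, matching the first line of (5.6); the integrated expression for $q$ is independent of the reference point $x_0$ on a periodic solution, which reproduces the second line. The mean constraint $\bar v=0$ translates to $\bar\tau=m$.

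The main subtlety, as opposed to a routine Euler--Lagrange derivation, is precisely the restriction of test functions to the mean-zero subspace. Because one cannot use constant test functions, variations only pin down $-v''+D\tilde W(v)$ modulo a constant vector, and the whole point of the argument is to recognize this constant as the Lagrange multiplier $\tilde q$ and then check, by averaging both sides, that it coincides with $\tfrac{1}{T}\int_0^T D\tilde W(v)\,dx$, so no freedom is lost and the full system (5.6) is recovered.
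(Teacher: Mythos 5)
Your argument is correct and follows essentially the same route as the paper: test the critical-point identity against $\phi=\psi-\bar\psi$ for arbitrary $\psi\in H^1_T$, recognize the constant $\tilde q=\frac{1}{T}\int_0^T D\tilde W(v)\,dx$ as the multiplier absorbed by the mean-zero constraint, and integrate by parts to recover $v''=D\tilde W(v)-\tilde q$. The only differences are cosmetic: the paper also records the (immediate) converse direction, that a solution of the ODE is a critical point, while you supply somewhat more detail on the $H^2$ regularity, the periodicity of $v'$, and the unwinding of the substitution $\tau=v+m$ back to (5.6).
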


\begin{proof}
This can be regarded as a simple consequence of Corollary 1.1 in
\cite{MW}. For completeness, we write the details here. First,
assume $v$ solves
\begin{equation}\label{6.5}
\begin{cases}
v''(x)=D\tilde{W}(v)-\tilde{q};\\
v(0)=v(T);v'(0)=v'(T);\\
\frac{1}{T}\int_0^T v(x)\,dx=0.
\end{cases}
\end{equation}

Multiplying the equation by $\phi\in H^1_{T,0}$ and integrating to
get
$$\int_0^T v'\phi'+D\tilde W(v)\cdot\phi\,dx=0,$$
i.e $v$ is a critical point of $\cI$.

Next, we assume $v$ is a critical point and $\phi\in H^1_{T}$. Then
$\phi-\bar\phi\in H^1_{T,0}$. Hence we have
$$\int_0^T v'\cdot(\phi-\bar\phi)'+\int_0^T D\tilde W(v)\cdot(\phi-\bar\phi)\,dx=0$$
i.e.
$$\int_0^T v'\cdot\phi'+\int_0^T D\tilde W(v)\cdot\phi-\int_0^T D\tilde W(v)\cdot\bar\phi\,dx=0$$
Noting that $\bar\phi=\frac{1}{T}\int_0^T \phi\,dx$,
we find that the left-hand
side expression above is:
$$\int_0^T v'\cdot\phi'+\int_0^T D\tilde W(v)\cdot\phi-\int_0^T D\tilde W(v)\cdot(\frac{1}{T}\int_0^T \phi\,dx)\,dx=0$$

Noticing that $\frac{1}{T}\int_0^T D\tilde{W}(v)\,dx=\tilde{q}$, we
get

$$\int_0^T v'\cdot\phi'+\int_0^T (D\tilde W(v)-\tilde{q})\cdot\phi\,dx=0,$$
which implies $v''=D\tilde W(v)-\tilde{q}.$
\end{proof}

\begin{remark}
If we consider models involving the restriction $v_3>-m_3$, we need
to consider a variational problem with this constraint, which will
make the admissible set not weakly closed.
\end{remark}

In view of the physical properties of the elastic potential function
$W$ (in particular, the polynomial structure of the
phase-transitional potential function), we can apply the direct method
of the calculus of variation to show existence. In order to deal with
the integral constant $q$, we may restrict the admissible sets (or
choose proper function space) on which we consider the functional or
use Lagrange multiplier to recover it by adding restriction
functional on the original space on which the
functional is defined. \\

In the following, we give some propositions of general nonlinear
functionals. These propositions and further materials can be found
in \cite{De, Ni, ZFC} and the references therein.

\begin{proposition} Let $\mathcal{X}$ be a Banach space, $I$ a real functional defined on $\mathcal{X}$ and $U$ be a
sequentially weakly compact set in $\mathcal{X}$. If $I$ is weakly
lower semi-continuous, then $I$ attains its minimum on $U$, i.e.
there is $x_0\in U$, such that $I(x_0)=\inf_{x\in U}I(x)$.
\end{proposition}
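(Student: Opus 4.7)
The plan is to execute the standard direct method of the calculus of variations. First, I would set $\mu:=\inf_{x\in U}I(x)$; note that $\mu$ is either a finite real number or $-\infty$, but in either case we can choose a minimizing sequence $\{x_n\}\subset U$ with $I(x_n)\to\mu$ as $n\to\infty$ (using the definition of infimum, or the definition of $-\infty$).

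Second, I would invoke the sequential weak compactness hypothesis on $U$: there exists a subsequence $\{x_{n_k}\}$ and an element $x_0\in U$ such that $x_{n_k}\rightharpoonup x_0$ weakly in $\mathcal{X}$. This is precisely the role of the assumption on $U$, and is the key point that rules out $\mu=-\infty$ a posteriori.

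Third, I would apply the weak lower semicontinuity of $I$ to the subsequence $\{x_{n_k}\}$, obtaining
\[
I(x_0)\le\liminf_{k\to\infty}I(x_{n_k})=\lim_{k\to\infty}I(x_{n_k})=\mu,
\]
where the middle equality holds because $\{I(x_{n_k})\}$ is a subsequence of a convergent sequence. On the other hand, since $x_0\in U$, the definition of infimum forces $I(x_0)\ge\mu$. Combining these inequalities gives $I(x_0)=\mu$ and shows in particular that $\mu>-\infty$, completing the proof.

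The argument is entirely routine; there is no real obstacle, since both ingredients (extraction of a weakly convergent subsequence and passage to the limit inferior under $I$) are supplied directly by the hypotheses. The content of the proposition is really that these two standard tools fit together, and the only subtle point worth flagging explicitly is that weak sequential compactness of $U$ also guarantees the limit point $x_0$ belongs to $U$, so that the minimum is actually attained inside the admissible set rather than merely on its weak closure.
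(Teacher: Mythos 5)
Your proof is correct and follows essentially the same route as the paper: extract a minimizing sequence, use sequential weak compactness to obtain a weakly convergent subsequence with limit $x_0\in U$, and apply weak lower semicontinuity to conclude $I(x_0)=\inf_{x\in U}I(x)>-\infty$. No substantive differences.
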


\begin{proof}
Let $c:=\inf_{x\in U}I(x)$. By definition of $\inf$,
there exists $\{x_n\}\subset U$ such that $I(x_n)\rightarrow c$. In
view that $U$ is sequentially weakly compact, $\{x_n\}$ admits a
weakly convergent subsequence, still denoted by $\{x_n\}$. Denote
$x_0\in \mathcal{X}$ the corresponding weak limit. Since $U$ is
weakly closed, we know $x_0\in U$. Noticing that weakly lower
semi-continuity of $I$, we have $c=\lim_{n}I(x_n)\geq I(x_0)$. By
the definition of $c$, we in turn know $I(x_0)=c>-\infty$, which
completes the
proof.
\end{proof}

It is well-known that a bounded weakly closed set in a reflexive
Banach space is weakly compact. In particular, a bounded closed convex
set in reflexive Banach space is weakly compact since weakly close
and close in norm are equivalent for convex sets. Hence we have the
following corollaries:

\begin{corollary} Let $U$ be a bounded weakly closed set in a reflexive Banach space
$\mathcal{X}$ and $I$ be a weakly lower semi-continuous real
functional on $\mathcal{X}$. Then there exists $x_0\in U$ such that
$I(x_0)=\inf_{x\in U}\mathcal{I}$.
\end{corollary}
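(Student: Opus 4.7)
The plan is to reduce this corollary directly to the preceding proposition by verifying that, under the hypotheses on $\mathcal{X}$ and $U$, the set $U$ is sequentially weakly compact. Once that is established, the existence of a minimizer is an immediate application of the proposition.

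First, I would recall the standard functional-analytic fact that a reflexive Banach space has the property that every bounded sequence admits a weakly convergent subsequence. This is Kakutani's theorem together with the Eberlein--\v{S}mulian theorem: reflexivity of $\mathcal{X}$ means that the closed unit ball $B_\mathcal{X}$ is weakly compact, and by Eberlein--\v{S}mulian, weak compactness in a Banach space is equivalent to weak sequential compactness. Hence any bounded sequence in $\mathcal{X}$, after rescaling into some multiple of $B_\mathcal{X}$, admits a weakly convergent subsequence.

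Next, I would take any sequence $\{x_n\}\subset U$. Since $U$ is bounded, $\{x_n\}$ has a subsequence (still denoted $\{x_n\}$) converging weakly to some $x_0 \in \mathcal{X}$ by the previous step. Since $U$ is weakly closed and $x_n \rightharpoonup x_0$ with $x_n \in U$, we conclude $x_0 \in U$. This shows $U$ is sequentially weakly compact.

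Finally, I would invoke the preceding proposition with this $U$ and the given weakly lower semi-continuous functional $I$ to obtain a point $x_0 \in U$ with $I(x_0) = \inf_{x\in U} I(x)$. There is no real obstacle here; the only subtlety is citing (or at minimum invoking the names of) Kakutani and Eberlein--\v{S}mulian to pass from reflexivity plus boundedness to sequential weak compactness. A more self-contained alternative, if one wishes to avoid Eberlein--\v{S}mulian, is to note that in the intended applications $\mathcal{X}=H^1_T$ (or $H^1_{T,0}$) is a separable Hilbert space, in which case sequential weak compactness of bounded sets follows directly from the Banach--Alaoglu theorem applied to the dual together with the Riesz representation.
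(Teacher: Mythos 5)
Your proposal is correct and follows essentially the same route as the paper, which simply notes that a bounded weakly closed set in a reflexive Banach space is (sequentially) weakly compact and then invokes the preceding proposition. Your additional citation of Kakutani and Eberlein--\v{S}mulian just makes explicit the "well-known" fact the paper leaves unjustified.
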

\begin{definition} A real functional $I$ on a Banach space $\mathcal{X}$ is
said to be \textit{coercive} if
$$\lim_{|x|_{\mathcal{X}}\rightarrow+\infty}I(x)=+\infty.$$
\end{definition}

\begin{corollary} Any coercive weakly lower semi-continuous real functional $I$
defined on a reflexive Banach space $\mathcal{X}$ admits a global
minimizer.
\end{corollary}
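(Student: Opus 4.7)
The plan is to apply the direct method of the calculus of variations, deducing the statement from the preceding Corollary on minimizers over bounded weakly closed sets in reflexive Banach spaces. The key idea is that coercivity lets us localize the problem to an automatically bounded sublevel set of $I$, on which the preceding Corollary applies.

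Concretely, I would set $c := \inf_{x \in \mathcal{X}} I(x) \in [-\infty, +\infty)$, fix any real $\alpha > c$, and consider the nonempty sublevel set
\begin{equation*}
U := \{x \in \mathcal{X} : I(x) \leq \alpha\}.
\end{equation*}
The three hypotheses then discharge the three conditions of the preceding Corollary, as follows. First, weak lower semi-continuity of $I$ makes $U$ (sequentially) weakly closed: if $x_n \rightharpoonup x_0$ with $I(x_n) \leq \alpha$, then $I(x_0) \leq \liminf_{n} I(x_n) \leq \alpha$, so $x_0 \in U$. Second, coercivity forces $U$ to be bounded in norm, for if $U$ contained a sequence with $\|x_n\|_{\mathcal{X}} \to \infty$, coercivity would give $I(x_n) \to +\infty$, contradicting $I(x_n) \leq \alpha$. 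Third, reflexivity of $\mathcal{X}$ combined with the preceding two properties implies that $U$ is weakly (sequentially) compact, via the standard fact that bounded weakly closed subsets of reflexive Banach spaces are weakly compact.

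The preceding Corollary then produces some $x_0 \in U$ with $I(x_0) = \inf_{x \in U} I(x)$. By construction every minimizing sequence of $I$ on $\mathcal{X}$ eventually lies in $U$, so $\inf_{U} I = \inf_{\mathcal{X}} I = c$, and $x_0$ is a global minimizer; in particular $c > -\infty$ is finite. There is no real obstacle here: the roles of the three hypotheses---coercivity to supply boundedness, reflexivity to upgrade boundedness to weak compactness, and weak lower semi-continuity to pass to the limit in the functional---are entirely classical, and the only slightly non-routine step is verifying that sublevel sets of weakly lower semi-continuous functionals are weakly closed, which is immediate from the sequential definition of weak lower semi-continuity used in the paper.
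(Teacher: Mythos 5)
Your proof is correct, and it is essentially the argument the paper intends: the corollary is stated there without proof as an immediate consequence of the preceding corollary on bounded weakly closed sets, and your localization to a sublevel set (bounded by coercivity, weakly closed by weak lower semi-continuity, hence weakly compact by reflexivity) is exactly how that reduction goes. The only cosmetic difference is that the paper, where it actually uses this result (e.g.\ in restricting minimizers to $\cA_i\cap B_{H^1_{T,0}}[0,R]$), localizes to a large closed ball rather than a sublevel set, which is an equivalent device.
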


\subsection{A general existence result}
In this part, we first give a general result for models with the
physical assumption $\tau_3>0$, i.e, $v_3>-m_3$. We will assume the
following conditions on the potential $W$\\

\textbf{(A1)} $W\in C^2$ and $W(\tau)\rightarrow+\infty$ as
$\tau_3\rightarrow 0^+$. For $\tau_3\leq 0$, define
$W(\tau)=+\infty$;

\textbf{(A2)} There exist a positive constant $C$ such that
$W(\tau)\geq\frac{C}{\tau_3^2}$ for $\tau\in R^n$ ($n=1,2,3$);

\textbf{(A3)} There exists a constant vector $m\in
\mathbb{R}^3_+:=\{m\in\mathbb{R}^3; m_3>0\}$ such that
$\sigma\{D^2W(m)\}\cap\mathbb{R}^1_-\neq\emptyset$. Here
$\sigma\{D^2W(m)\}$ is the spectrum set of $D^2W(m)$.\\

Assumption $(A2)$ implies in particular that the potential is
bounded from below. Assumption $(A3)$ amounts to saying that there is a point
where the potential is concave. From the physical point of view,
this is quite reasonable.

\begin{remark}
A simple kind of potential function is that for an isentropic polytropic gas,
for which  $dW(\tau)=c\tau_3^{-\gamma}$, $\gamma>1$.
This yields $W(\tau)=c_2 \tau_3^{1-\gamma}$, with
$0<1-\gamma<1$ for $\gamma$ in the typical range $1<\gamma<2$
suggested by statistical mechanics \cite{B}, hence blowup as $\tau_3\to 0$ at
rate slower than $c/\tau_3^2$.
Indeed, a point charge model with inverse square law yields in the
continuum limit $W(\tau)\sim \tau_3^{-2/3}$ for dimension $3$,
consistent with a monatomic gas law $\gamma=5/3$.
Thus, (A2) requires a near-range repulsion stronger than inverse square.
Alternatively, one may assume not point charges but particles of finite radius,
as is often done in the literature,
in which case $W(\tau)=\infty$ for $\tau_3\le \alpha$, $\alpha>0$, also
satisfying (A2).  However, in this case, a much simpler argument would suffice
to yield $\tau_3\ge \alpha$ a.e.
\end{remark}

\begin{theorem}\label{6.13}
Assume (A1), (A2) and (A3). If $(\frac{2\pi}{T})^2<\lambda(m)$, then we have a physical
nonconstant periodic wave solution for the problem $(3.2)$ for which
the mean over one period of $\tau$ is $m$. Here $-\lambda(m)$ is the
smallest eigenvalue of $D^2W(m)$.
\end{theorem}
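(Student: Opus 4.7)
The plan is to apply the direct method of the calculus of variations to the functional $\cI(v)=\int_0^T \tfrac12|v'|^2\,dx+\int_0^T F(v)\,dx$ on $H^1_{T,0}$, where $F(v)=W(v+m)-W(m)-DW(m)\cdot v$, extended by $+\infty$ on the set where $v_3+m_3\le 0$. By Proposition 6.8, any minimizer yields a solution of \eqref{5.6}, and, by Proposition 5.1, a standing-wave solution of \eqref{3.2} with mean $m$. Boundedness from below and coercivity are quick: (A2) gives $W\ge 0$, and since $\bar v=0$ kills $\int DW(m)\cdot v\,dx$, we get $\int F(v)\,dx\ge -TW(m)$, whence $\cI(v)\ge \tfrac12\int|v'|^2dx-TW(m)$; Wirtinger's inequality (Proposition 6.2) then controls $\|v\|_{H^1}$ by $\int|v'|^2dx$ on $H^1_{T,0}$.

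For weak lower semi-continuity, pick a minimizing sequence $v_n\rightharpoonup \bar v$; the compact embedding $H^1_T\hookrightarrow C[0,T]$ yields uniform convergence $v_n\to\bar v$. The kinetic term is convex and hence w.l.s.c.; for the potential, $F$ is bounded below by $-W(m)-DW(m)\cdot v$, and $F(v_n)(x)\to F(\bar v)(x)$ pointwise at every $x$ with $\bar v_3(x)+m_3>0$, while $F(v_n)(x)\to +\infty$ at any $x$ with $\bar v_3(x)+m_3\le 0$. Fatou's lemma then forces $\int F(\bar v)\,dx\le\liminf\int F(v_n)\,dx<\infty$. Combined with (A2), this integrability means $\int_0^T C(\bar v_3+m_3)^{-2}\,dx<\infty$; since $\bar v_3\in H^1$ is $\tfrac12$-Hölder and $|x-x_0|^{-1}$ is not integrable, $\bar v_3+m_3$ can vanish nowhere, and by continuity has a strict positive lower bound. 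Hence $\bar v$ is an admissible physical configuration and, since the constraint is inactive, arbitrary $H^1_{T,0}$-variations are permitted, so Proposition 6.8 applies and $\bar v$ solves \eqref{5.6}.

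The remaining (and main) issue is to exclude $\bar v\equiv 0$; here hypothesis (A3) and the spectral gap condition $(2\pi/T)^2<\lambda(m)$ enter. Let $\xi\in\mathbb{R}^n$ be a unit eigenvector of $D^2W(m)$ with eigenvalue $-\lambda(m)$, and test with
\begin{equation}
v_\eps(x):=\eps\cos(2\pi x/T)\,\xi\in H^1_{T,0}.
\end{equation}
A direct computation gives $\tfrac12\int_0^T|v_\eps'|^2dx=\pi^2\eps^2/T$, while Taylor expansion $F(v)=\tfrac12 v^T D^2W(m)v+o(|v|^2)$ yields $\int_0^T F(v_\eps)dx=-\tfrac14\lambda(m)T\eps^2+o(\eps^2)$, hence
\begin{equation}
\cI(v_\eps)=\tfrac{T\eps^2}{4}\bigl[(2\pi/T)^2-\lambda(m)\bigr]+o(\eps^2)<0
\end{equation}
for all sufficiently small $\eps>0$ under the hypothesis. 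Therefore the infimum is strictly negative, while $\cI(0)=0$, so the minimizer $\bar v\ne 0$ and the corresponding $\tau=\bar v+m$ is a genuinely nonconstant physical periodic standing wave of \eqref{3.2}.

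The hard part is the physical constraint: the admissible set $\{v_3+m_3>0\}$ is not weakly closed, so a priori a minimizing sequence could concentrate toward the singular boundary. The remedy is the strong blow-up rate in (A2), which together with the $\tfrac12$-Hölder bound intrinsic to $H^1$ forces any finite-energy configuration to stay uniformly bounded away from the singularity; once this is secured, the remaining pieces (coercivity, semicontinuity, the Euler--Lagrange equation, and the test-function computation) are standard.
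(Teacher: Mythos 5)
Your proposal is correct and follows essentially the same route as the paper: the direct method on $H^1_{T,0}$ with coercivity and weak lower semicontinuity from (A2) and the compact embedding, exclusion of the singular boundary via the $|x-x_0|^{-1}$ non-integrability argument combining (A2) with the H\"older bound, and nontriviality via a single Fourier mode along the negative eigenvector of $D^2W(m)$. The only (immaterial) difference is that you evaluate $\cI$ directly on the test function $v_\eps$ and conclude $\inf\cI<0=\cI(0)$, whereas the paper phrases the same computation as a negative second variation $\cI''(0)$ at the critical point $0$; your treatment of the potentially non-weakly-closed admissible set via Fatou is in fact slightly more careful than the paper's Lemma 6.17.
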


In the following Lemmas of this section, we assume assumption (A1), (A2) and (A3) hold. Define two subsets of $H^1_{T,0}$ by
$$\cA_1:=\{v\in H^1_{T,0}; v_3>-m_3\};$$
$$\cA_2:=\{v\in H^1_{T,0}; v_3\geq-m_3\}.$$

\begin{remark} The admissible set $\cA_1$ is not weakly
closed in $H^1_{T,0}$.
\end{remark}

\begin{lemma}
Under assumptions (A1)-(A3), $\cI$ is a coercive functional on
$H^1_{T,0}$.
\end{lemma}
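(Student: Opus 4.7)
The plan is to reduce coercivity to controlling the quadratic gradient term, using the mean-zero condition to kill the linear part of $F$ and the nonnegativity of $W$ (implied by (A2)) to bound the remaining potential from below. Concretely, I would first observe that since $\bar v=0$, the linear term contributes
\[
\int_0^T DW(m)\cdot v\,dx = DW(m)\cdot \int_0^T v\,dx = 0,
\]
so that
\[
\int_0^T F(v)\,dx = \int_0^T W(v+m)\,dx - T\,W(m).
\]
Assumption (A2) forces $W\ge 0$ everywhere (including the convention $W=+\infty$ on $\{\tau_3\le0\}$ from (A1)), hence $\int_0^T W(v+m)\,dx\ge 0$ and consequently
\[
\cI(v)\ \ge\ \tfrac12\int_0^T|v'|^2\,dx - T\,W(m).
\]

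Next I would invoke Wirtinger's inequality, available because $v\in H^1_{T,0}$ has mean zero, to convert the $L^2$ bound on $v'$ into a full $H^1_T$ bound:
\[
\|v\|_{H^1_T}^2 = \int_0^T(|v|^2+|v'|^2)\,dx \ \le\ \Bigl(\tfrac{T^2}{4\pi^2}+1\Bigr)\int_0^T|v'|^2\,dx.
\]
Combining with the previous inequality gives
\[
\cI(v)\ \ge\ \frac{2\pi^2}{T^2+4\pi^2}\,\|v\|_{H^1_T}^2 - T\,W(m),
\]
which tends to $+\infty$ as $\|v\|_{H^1_T}\to\infty$, establishing coercivity.

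The main subtlety — and the one place I would be most careful — is the treatment of configurations where $v_3+m_3$ touches or crosses zero, since then $W(v+m)=+\infty$ on a set that might have measure zero. Two regimes must be addressed: if $v_3+m_3<0$ on an open set, the integral $\int_0^T W(v+m)\,dx$ is automatically $+\infty$ and there is nothing to prove. If instead $v_3+m_3\ge 0$ everywhere and vanishes at some $x_0$, then Sobolev embedding $H^1_T\hookrightarrow C^{0,1/2}$ yields $(v_3+m_3)(x)\le C|x-x_0|^{1/2}$ near $x_0$, so (A2) forces
\[
W(v+m)(x)\ \ge\ \frac{C'}{|x-x_0|},
\]
which is not integrable and again gives $\int_0^T W(v+m)\,dx=+\infty$. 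So in all effective cases one has $\int_0^T W(v+m)\,dx\ge 0$, and the bound above is valid on all of $H^1_{T,0}$. The only genuine computation is the Wirtinger/Hölder estimate just described; the rest is algebra.
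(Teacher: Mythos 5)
Your proof is correct and follows essentially the same route as the paper: the mean-zero condition kills the linear term $DW(m)\cdot v$, assumption (A2) gives $W\ge 0$ so that $\int_0^T F(v)\,dx\ge -T\,W(m)$, and coercivity then follows from the equivalence of $\|v'\|_{L^2}$ with the full $H^1_T$ norm on $H^1_{T,0}$ via Wirtinger's inequality (a step the paper leaves implicit, having noted earlier that Wirtinger supplies equivalent norms on the mean-zero subspace). Your additional discussion of the set where $v_3+m_3$ vanishes is harmless but not needed here, since $W\ge 0$ holds everywhere under the conventions of (A1)--(A2).
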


\begin{proof}
By the definition of $\cI$, we just need to consider
the part $\int_0^T F(v)\,dx$. By assumption (A2), we have

\begin{align*}
\int_0^T F(v)\,dx=&\int_0^T W(v+m)-W(m)-DW(m)\cdot v\,dx\\
=&\int_0^T W(v+m)-W(m)\,dx\\
\geq &-W(m)T>-\infty.
\end{align*}
\end{proof}

By the above lemma, we see that for sufficient large $R$ the
minimizers of $\cI$ on $\cA_i$ are restricted to the sets
$\bar\cA_i:=\cA_i\cap B_{H^1_{T,0}}[0,R]$ for $i=1,2$ where
$B_{H^1_{T,0}}[0,R]$ is the closed ball with center $0$ and radius
$R$ in $H^1_{T,0}$. Define $S_i:=\{v\in\cA_i;
\cI(v)=\inf_{\tilde{v}\in \cA_i}\cI(\tilde{v})\}$. Obviously, we
have $S_i:=\{v\in\bar{\cA_i}; \cI(v)=\inf_{\tilde{v}\in
\cA_i}\cI(\tilde{v})\}$.

\begin{lemma}$\bar{\cA_i}$ is a weakly compact set in $H^1_{T,0}$.
\end{lemma}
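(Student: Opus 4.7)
The plan is to reduce both cases $i=1,2$ to weak compactness of $\cA_2\cap B_{H^1_{T,0}}[0,R]$, which I would prove directly via convexity and Mazur's theorem. Since $H^1_{T,0}$ is a Hilbert space and hence reflexive, the closed ball $B_{H^1_{T,0}}[0,R]$ is weakly compact by Banach--Alaoglu, so the real task is to verify weak closedness of each $\cA_i$ before intersecting with the ball.

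For $i=2$, I would first observe that $\cA_2$ is convex: it is the intersection over $x\in[0,T]$ of the affine half-spaces $\{v\in H^1_{T,0}:v_3(x)\ge -m_3\}$, each convex because the evaluation $v\mapsto v_3(x)$ is a bounded linear functional on $H^1_T$ via the continuous embedding $H^1_T\hookrightarrow C([0,T])$. I would then check norm-closedness: if $v_n\to v$ in $H^1_{T,0}$, the compact Sobolev embedding gives uniform convergence, and the non-strict inequality $v_{n,3}(x)\ge -m_3$ passes to the uniform limit. Mazur's theorem then upgrades convex plus norm-closed to weakly closed, and intersection with the weakly compact ball yields weak compactness of $\bar{\cA_2}$.

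For $i=1$, the strict inequality $v_3>-m_3$ prevents $\cA_1$ from being weakly closed, as already flagged in Remark 6.15. My plan is to show that the weak closure of $\bar{\cA_1}$ coincides with $\bar{\cA_2}$ and then invoke the previous case. For the non-trivial containment I would use the rescaling $v_k:=(1-1/k)v$ applied to any $v\in\bar{\cA_2}$: this preserves mean zero, satisfies $(v_k)_3\ge -(1-1/k)m_3>-m_3$ by $m_3>0$ so that $v_k\in\cA_1$, lies in $B_{H^1_{T,0}}[0,R]$ since $\|v_k\|\le\|v\|\le R$, and converges to $v$ strongly (hence weakly) in $H^1_T$. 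Combined with the fact that weak limits of sequences in $\bar{\cA_1}$ land in $\bar{\cA_2}$ (by the $i=2$ closedness step), this identifies the weak closure of $\bar{\cA_1}$ with the weakly compact set $\bar{\cA_2}$.

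I expect the main obstacle to be the interpretive point that a not-weakly-closed set cannot literally be weakly compact in a Hausdorff topology: a strict reading of the lemma for $i=1$ is false. The honest resolution is to read the lemma for $i=1$ as \emph{relative} weak compactness of $\bar{\cA_1}$, namely weak compactness of its weak closure, which the density step above shows is exactly $\bar{\cA_2}$. This is the form actually needed to run the direct method on $\bar{\cA_1}$ in the sequel, since the argument will only extract weakly convergent subsequences of minimizing sequences.
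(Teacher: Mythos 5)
Your argument for $i=2$ is essentially identical to the paper's: both establish boundedness, convexity, and norm-closedness of $\bar{\cA_2}$ (the latter via the Sobolev embedding into $C[0,T]$, which lets the pointwise constraint $v_3\ge -m_3$ pass to uniform limits), then invoke Mazur's theorem to upgrade norm-closed-plus-convex to weakly closed, and conclude weak compactness from reflexivity of $H^1_{T,0}$. Where you differ is that the paper's proof silently treats only the case $i=2$ and never mentions $i=1$, even though the lemma is stated for both indices and the paper's own Remark just above concedes that $\cA_1$ is not weakly closed --- so the statement for $i=1$, read literally, cannot hold, exactly as you observe. Your resolution via relative weak compactness is correct: the rescaling $v_k=(1-1/k)v$ does preserve mean zero, stays in the ball, satisfies the strict constraint because $m_3>0$, and converges strongly, so the weak closure of $\bar{\cA_1}$ is precisely $\bar{\cA_2}$; and this relative form is indeed all that the direct method needs downstream (the paper itself effectively works with $\cA_2$ and then shows a posteriori, in the lemma on $S_2$, that minimizers satisfy $v_3\ge -m_3+\epsilon$, hence lie in $\cA_1$, which is how the strict constraint is recovered). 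In short: no gap in your proposal; it matches the paper on the case the paper actually proves and supplies a careful reading of the case the paper omits.
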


\begin{proof}
$\mathcal{\bar A}_2$ is  bounded by its definition. Since $H^1_T$ is
reflexive, we know $\mathcal{\bar A}_2$ is weakly sequentially
compact. Also, $\mathcal{\bar A}_2$ is convex. Indeed, we can use
the definition of convexity of a set to check this easily. An appeal
to Sobolev embedding theorem yields that $\mathcal{\bar A}_2$ is
closed in norm topology of $H_1^T$. For a convex set, closeness in
norm topology and weak topology coincides, hence we have that
$\mathcal{\bar A}_2$ is weakly closed. Putting this information
together, we have shown that $\mathcal{\bar A}_2$ is weakly compact.
\end{proof}

\begin{lemma}$\cI$ is a weakly lower semi-continuous functional on
$H^1_{T,0}$.
\end{lemma}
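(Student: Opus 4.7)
The plan is to split $\cI$ into a quadratic Dirichlet term and a nonlinear potential term, and prove weak lower semi-continuity of each piece separately. Write $\cI(v) = D(v) + P(v)$ with $D(v) := \tfrac{1}{2}\int_0^T |v'|^2\,dx$ and $P(v) := \int_0^T \bigl[W(v+m) - W(m) - DW(m)\cdot v\bigr]\,dx$, and fix an arbitrary sequence $v_n \rightharpoonup v$ weakly in $H^1_{T,0}$.

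For the Dirichlet term, weak convergence $v_n \rightharpoonup v$ in $H^1_{T,0}$ implies weak convergence $v_n' \rightharpoonup v'$ in $L^2(0,T)$ (since $v\mapsto v'$ is bounded from $H^1$ to $L^2$, hence weak-weak continuous), and the squared $L^2$-norm is convex and strongly continuous, hence weakly lower semi-continuous; thus $D(v)\le\liminf_n D(v_n)$. For the nonlinear term, I would invoke the compact Sobolev embedding $H^1_T \subset\subset C([0,T])$ of Proposition~6.2, which upgrades weak convergence to strong uniform convergence $v_n \to v$ in $C([0,T])$. The linear contribution $\int_0^T DW(m)\cdot v_n\,dx$ then passes to the limit by uniform convergence, and $\int_0^T W(m)\,dx$ is a constant. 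For the genuinely nonlinear piece $\int_0^T W(v_n+m)\,dx$, use that $W$, extended by $+\infty$ where $\tau_3\le 0$ as in (A1), is lower semi-continuous on all of $\mathbb{R}^3$; combined with $v_n+m \to v+m$ uniformly, this yields the pointwise bound $W(v(x)+m) \le \liminf_n W(v_n(x)+m)$ for every $x \in [0,T]$. Since $W\ge 0$ by (A2), Fatou's lemma then delivers $\int_0^T W(v+m)\,dx \le \liminf_n \int_0^T W(v_n+m)\,dx$. Assembling the three estimates gives $\cI(v) \le \liminf_n \cI(v_n)$.

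The main obstacle, and the reason the lsc extension of $W$ is crucial, is the case when $v$ has $v_3+m_3$ touching or crossing zero: there $P(v)=+\infty$ and the ordinary Lebesgue integral would be improper. The lsc extension together with Fatou handles this uniformly, since if $P(v)=+\infty$ the uniform convergence forces $\liminf_n P(v_n)=+\infty$ as well and the desired inequality is trivial. This is exactly the mechanism that, combined with the asymptotic behavior of $W$, allows the minimization to be carried out cleanly despite the admissible set $\cA_1$ failing to be weakly closed.
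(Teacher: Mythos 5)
Your proof is correct, and its skeleton is the same as the paper's: split $\cI$ into the Dirichlet part and the potential part, treat the Dirichlet part as (half) the squared $L^2$-norm of $v'$ --- which on the mean-zero space $H^1_{T,0}$ is an equivalent norm, hence convex and strongly continuous, hence weakly lower semi-continuous --- and use the compact embedding $H^1_T\subset\subset C([0,T])$ to upgrade weak convergence to uniform convergence for the nonlinear part. Where you genuinely diverge is in the treatment of $\int_0^T W(v_n+m)\,dx$: the paper simply asserts that uniform convergence $v_n\to v$ gives \emph{convergence} $\int_0^T F(v_n)\,dx\to\int_0^T F(v)\,dx$, which tacitly presumes that $W$ is finite and continuous along the limit, i.e.\ that $v_3+m_3$ stays away from zero; you instead use only the lower semi-continuity of the extension of $W$ by $+\infty$ (guaranteed by (A1)--(A2)) together with Fatou's lemma and the lower bound $W\ge 0$, concluding $\int_0^T W(v+m)\,dx\le\liminf_n\int_0^T W(v_n+m)\,dx$. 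This buys robustness exactly in the delicate case the paper cares about --- limits lying on the boundary of $\cA_2$ where $v_3+m_3$ touches zero and $P(v)=+\infty$ --- and it dovetails with the later lemma showing minimizers satisfy $v_3\ge -m_3+\epsilon$; the cost is that you obtain only lower semi-continuity rather than continuity of the potential term, but that is all the lemma claims and all the direct method requires. For the incompressible models of Section 7, where $W$ is a polynomial with no singular set, the two arguments coincide.
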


\begin{proof}
Let $v^{n}\rightarrow v$ weakly in $H^1_{T,0}$. By
Sobolev imbedding, we have $v^n\rightarrow v$ uniformly in $[0,T]$.
Hence we have $\int_0^T F(v^n)\,dx\rightarrow \int_0^T F(v)\,dx$.
Because of the mean zero property, $\int_0^T |v'|^2\,dx$ is of norm
form, hence it is a weakly lower semi-continuous functional.
\end{proof}

\begin{lemma}$S_2\neq\emptyset$ and $v_3\geq-m_3+\epsilon$ for $v\in
S_2$ under the assumption of Theorem 6.12. Here $\epsilon$ is a
positive constant.
\end{lemma}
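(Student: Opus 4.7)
The plan is to combine the direct method of the calculus of variations with the blow-up assumption (A2) in a quantitative way. First, to show $S_2\neq\emptyset$, I would observe that $\bar{\mathcal{A}_2}=\mathcal{A}_2\cap B_{H^1_{T,0}}[0,R]$ is bounded, convex and closed in norm (hence weakly closed by Mazur's theorem), so weakly compact by reflexivity of $H^1_{T,0}$ (Lemma preceding 6.12). By weak lower semi-continuity of $\mathcal{I}$ (previous lemma) and Corollary 6.10 applied on $\bar{\mathcal{A}_2}$, the infimum is attained; by coercivity (Lemma 6.11), enlarging $R$ does not change the infimum, so we obtain $v_*\in S_2$.

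Second, I would use the hypothesis $(2\pi/T)^2<\lambda(m)$ from Theorem 6.13 to show that the infimum is strictly negative. Let $\xi\in\mathbb{R}^n$ be a unit eigenvector of $D^2W(m)$ with eigenvalue $-\lambda(m)$, and test with $v_t(x):=t\cos(2\pi x/T)\,\xi\in H^1_{T,0}$. A Taylor expansion of $F(v)=W(v+m)-W(m)-DW(m)\cdot v$ around $0$ gives
\begin{equation*}
\mathcal{I}(v_t)=\frac{t^2T}{4}\Big(\Big(\frac{2\pi}{T}\Big)^2-\lambda(m)\Big)+o(t^2),
\end{equation*}
which is strictly negative for small $t>0$. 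Thus $\mathcal{I}_0:=\inf_{\mathcal{A}_2}\mathcal{I}<0$, and in particular the minimizer $v_*$ is nonconstant (and automatically lies in $\mathcal{A}_1$ rather than merely $\mathcal{A}_2$).

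Third, to upgrade $v_{*,3}>-m_3$ to $v_{*,3}\geq -m_3+\epsilon$, I would exploit (A2) together with the one-dimensional $H^{1/2}$-Hölder bound. From $\mathcal{I}(v_*)=\mathcal{I}_0$ and $\int_0^T v_*\,dx=0$, one has the a priori bounds
\begin{equation*}
\|v_*'\|_{L^2}^2\leq 2(\mathcal{I}_0+W(m)T)=:K_1^2,\qquad \int_0^T\frac{C}{(v_{*,3}+m_3)^2}\,dx\leq K_2,
\end{equation*}
the second via (A2) and the identity $\int F(v_*)=\int W(v_*+m)-W(m)T$. If $\epsilon:=\min_{[0,T]}(v_{*,3}+m_3)$ is attained at $x_0$, the Hölder estimate $(v_{*,3}+m_3)(x)\leq \epsilon+K_1|x-x_0|^{1/2}$ yields
\begin{equation*}
K_2\geq\int_0^T\frac{C\,dx}{(\epsilon+K_1|x-x_0|^{1/2})^2}\geq \frac{C}{K_1^2}\,\ln\!\Big(1+\frac{K_1^2 T}{4\epsilon^2}\Big),
\end{equation*}
which forces $\epsilon\geq \epsilon_0>0$ for some $\epsilon_0$ depending only on $K_1,K_2,C,T$. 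In particular $\epsilon=0$ is excluded, recovering strict inequality, and a uniform positive lower bound follows.

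The main obstacle is the last step: the admissible set $\mathcal{A}_1$ is not weakly closed (Remark before Lemma 6.11), so one cannot directly minimize over it and must instead minimize over $\mathcal{A}_2$ and then rule out boundary behavior by the quantitative blow-up of $W$. The spectral hypothesis $(2\pi/T)^2<\lambda(m)$ is essential here, because without the strict negativity of $\mathcal{I}_0$ one could only obtain $v_{*,3}+m_3>0$ a.e., not the uniform separation from zero that is needed to guarantee $v_*$ solves the Euler--Lagrange equation (6.5) in the classical sense on all of $[0,T]$.
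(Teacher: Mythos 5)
Your proof is correct and its decisive step is the same as the paper's: the embedding $H^1\hookrightarrow C^{0,1/2}$ combined with the blow-up hypothesis (A2) forces $\int_0^T W(v+m)\,dx$ to diverge if $v_3+m_3$ touches zero, while $\mathcal{I}(v)\le\mathcal{I}(0)=0<+\infty$ at a minimizer. The paper runs this as a contradiction (assume $v_3(x_0)=-m_3$, deduce $\mathcal{I}(v)=+\infty$), whereas your version is quantitative and yields a lower bound $\epsilon_0$ depending only on $C$, $T$ and $W(m)$, hence uniform over all of $S_2$ --- a mild strengthening of what the paper literally proves. Two remarks. First, your step 2 (strict negativity of the infimum via the test function $t\cos(2\pi x/T)\,\xi$) is the content of the paper's \emph{next} lemma, which shows $0\notin S_2$; it is harmless here but redundant for the present statement. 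Second, your closing claim that the spectral condition $(2\pi/T)^2<\lambda(m)$ is ``essential'' for the uniform separation from the boundary is not right: the separation argument uses only that the infimum is $\le 0$ (in particular finite), which follows from $0\in\mathcal{A}_2$ and $\mathcal{I}(0)=0$ with no spectral hypothesis whatsoever. The spectral condition enters only to exclude the trivial minimizer, i.e., in the subsequent lemma.
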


\begin{proof}
By Proposition 6.8, $S_2\neq\emptyset$. Note that $0\in \cA_2$,
$\cI(0)=0$ and hence $\cI(v)\leq 0$. Hence we will have
$v_3\geq-m_3+\epsilon$. Indeed, suppose there were $x_0\in [0,T]$
such that $v_3 (x_0)=-m_3$. Then by Sobolev embedding there would be
a positive constant $K$ such that
$|v_3(x)+m_3|=|(v_3(x)+m_3)-(v_3(x_0)+m_3)|\leq K|x-x_0|^{1/2}$ for
$x\in [0,T]$. By assumption $(A2)$, we would have
$I(v)=\int_0^T(1/2)|v'|^2\,dx+\int_0^T W(v+m)-W(m)\,dx\geq\int_0^T
CK|x-x_0|^{-1}dx-\int_0^T W(m)dx=+\infty$, a contradiction.
\end{proof}

\begin{lemma}$0\not\in S_1=S_2$ under the assumption of Theorem
6.12.
\end{lemma}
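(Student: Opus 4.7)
My plan is to prove the two statements separately: first that $S_1=S_2$ as sets, then that neither of them contains $0$. The identification $S_1=S_2$ will follow almost immediately from the preceding Lemma 6.15, which shows that any minimizer $v\in S_2$ satisfies $v_3\ge -m_3+\epsilon$ strictly. Concretely, since $\cA_1\subset\cA_2$, I have $\inf_{\cA_2}\cI\le \inf_{\cA_1}\cI$; and since any $v\in S_2$ actually lies in $\cA_1$ by Lemma 6.15, the reverse inequality also holds and $\inf_{\cA_1}\cI=\inf_{\cA_2}\cI$. Then $S_2\subset S_1$ by the lemma, and $S_1\subset S_2$ follows because $\cA_1\subset\cA_2$ and the infima coincide.

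The main content is showing $0\not\in S_2$, which I would do by constructing an explicit trial function $v_\delta\in\cA_2$ with $\cI(v_\delta)<0=\cI(0)$, using the spectral hypothesis $(2\pi/T)^2<\lambda(m)$. Let $\phi\in\RR^n$ be a unit eigenvector of $D^2W(m)$ corresponding to the smallest eigenvalue $-\lambda(m)$, and set
\[
v_\delta(x):=\delta\,\phi\,\cos(2\pi x/T),\qquad \delta>0.
\]
This function has mean zero over $[0,T]$, so $v_\delta\in H^1_{T,0}$; and for $\delta$ small enough that $\delta|\phi_3|<m_3$, the pointwise bound $|v_{\delta,3}|\le\delta|\phi_3|<m_3$ gives $v_\delta\in\cA_1\subset\cA_2$.

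Next I would compute each piece of $\cI(v_\delta)$. The kinetic part is straightforward:
\[
\int_0^T \tfrac{1}{2}|v_\delta'|^2\,dx \;=\; \tfrac{\delta^2}{4}|\phi|^2\,(2\pi/T)^2\,T.
\]
For the potential part, I Taylor-expand $F(v)=W(v+m)-W(m)-DW(m)\cdot v$ around $v=0$, using $F(0)=0$, $DF(0)=0$, and $D^2F(0)=D^2W(m)$, to obtain $F(v)=\tfrac{1}{2}\langle D^2W(m)v,v\rangle+r(v)$ with $|r(v)|=o(|v|^2)$ uniformly as $|v|\to 0$. Integrating and using $\langle D^2W(m)\phi,\phi\rangle=-\lambda(m)$ together with $\int_0^T\cos^2(2\pi x/T)\,dx=T/2$ yields
\[
\int_0^T F(v_\delta)\,dx \;=\; -\tfrac{\delta^2}{4}\lambda(m)\,T \;+\; o(\delta^2).
\]
Adding the two contributions,
\[
\cI(v_\delta) \;=\; \tfrac{\delta^2 T}{4}\bigl[(2\pi/T)^2-\lambda(m)\bigr]\;+\;o(\delta^2),
\]
which is strictly negative for all sufficiently small $\delta>0$ by the hypothesis $(2\pi/T)^2<\lambda(m)$. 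Since $\cI(0)=0$, this shows $0$ is not a minimizer on $\cA_2$, hence $0\not\in S_2=S_1$.

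The only subtle point is controlling the $o(\delta^2)$ error: because $v_\delta$ is bounded in $L^\infty$ by $\delta|\phi|$, the remainder $r(v_\delta(x))$ satisfies a uniform pointwise estimate $|r(v_\delta(x))|\le \eta(\delta)\,\delta^2$ with $\eta(\delta)\to 0$ as $\delta\to 0$, which is enough to absorb the Taylor error after integration. Everything else is routine; no deep variational machinery is needed once the correct mean-zero cosine mode in the direction of the concavity of $W$ at $m$ has been identified.
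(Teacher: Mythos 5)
Your proposal is correct and follows essentially the same route as the paper: the paper likewise tests the functional at $0$ along the direction $v_0\,\eta\sin(2\pi x/T)$ with $v_0$ a unit eigenvector for $-\lambda(m)$, computes the second variation $\cI''(0)$ in that direction to be $\frac{\eta^2 T}{2}\{(\frac{2\pi}{T})^2-\lambda(m)\}<0$, and concludes $0$ is not a minimizer; your explicit Taylor expansion of $\cI(v_\delta)$ is just the unwound form of that second-variation argument. The only (welcome) difference is that you spell out the identity $S_1=S_2$ via the strict bound $v_3\ge -m_3+\epsilon$ from the preceding lemma, which the paper dismisses as obvious.
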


\begin{proof}
Consider the second variation. An easy computation
shows that for $v,\phi$ in $H^1_{T,0}$
$$\cI''(v):(\phi\otimes\phi)=\int_0^T|\phi'|^2\,dx+\int_0^T D^2W(v+m):(\phi\otimes\phi)\,dx.$$
To show $0\not\in S_2$, consider
$$\cI''(0):(\phi\otimes\phi)=\int_0^T|\phi'|^2\,dx+\int_0^T D^2W(m):(\phi\otimes\phi)\,dx.$$
Let $\tilde{\phi}(x)=\eta\sin(\frac{2\pi x}{T})$ for $0<\eta<m_3$
and $v_0\in\mathbb{R}^3$ be a unit eigenvector corresponding to
$-\lambda(m)$. We see that $\phi(x):=\tilde{\phi}(x)v_0\in \cA_2$.
Since $0$ is a critical point of $\cI$ on $H^1_{T,0}$ and
\begin{align*}\cI''(0):(\phi v_0\otimes\phi v_0)=&\int_0^T\eta^2(\frac{2\pi}{T})^2(\cos(\frac{2\pi
x}{T}))^2\,dx-\lambda(m)\int_0^T\eta^2(\sin(\frac{2\pi
x}{T}))^2\,dx\\
=&\frac{\eta^2 T}{2}\{(\frac{2\pi}{T})^2-\lambda(m)\}<0.
\end{align*}
Hence we see that $0\not\in S_2$ and $S_1=S_2$ is obvious.
\end{proof}

\begin{proof}[Proof of Theorem 5.11] Combining Lemma 6.14-Lemma 6.18, we
finish the proof of Therem 6.12. \end{proof}

\begin{remark} \label{conrmk} The condition $(\frac{2\pi}{T})^2<\lambda(m)$
in Theorem 6.12 on the period $T$, is readily seen by Fourier
analysis to be the sharp criterion for stability of the constant
solution $\tau\equiv m$, $u\equiv 0$. Equivalently, it is the Hopf
bifurcation condition as period is increased, marking the minimum
period of bifurcating periodic waves. Thus, it is natural, and no
real restriction.
On the other hand, there may well exist minimizers at whose
mean $m$ $W$ is convex; this condition is sufficient but certainly
not necessary.
Likewise, there exist saddle-point solutions not detected by the
direct approach.
\end{remark}

\subsection{Relation to standard results, and directions for further study}

In the scalar case $\tau\in \RM^1$, the condition that $D^2W(m)$
have a negative eigenvalue is equivalent to convexity of the
Hamiltonian $H$ at the equilibrium $(m,0)$, under which assumption
there are many results on existence of periodic solutions of all
amplitudes; see, for example, \cite{Ra} and later elaborations.
Likewise in the vectorial case $\tau\in \RM^d$, $d>1$, if $D^2
W(m)<0$, then we may appeal to standard theory to obtain existence
of periodic solutions by a variety of means; indeed, the convexity
condition may be substantially relaxed for solutions in the large,
as described in \cite{Ra}, and replaced by global conditions
ensuring, roughly, star-shaped level sets of the Hamiltonian.
On the other hand, review of the potentials considered here reveals that,
typically, it is a single eigenvalue of $D^2W$
that becomes negative and not all eigenvalues,
and so these methods cannot be directly applied.

It is an interesting question to what extent such standard methods could
be adapted to the situation of a Hamiltonian potential (in our
case $-W$) with a single convex mode.
Existence of small amplitude periodic waves at least is treatable by
Hamiltonian Hopf bifurcation analysis.
The question is to what extent if any one can make global conclusions
beyond what we have done here, in particular, to relax
for large solutions the nonconvexity condition on $W$ at $m$.
Finally, it would be interesting to find natural and readily verifiable
conditions for existence of saddle-point solutions in this context.

\section{Existence of periodic solutions for specific models}

In this section, we focus on the existence of periodic waves for the
incompressible models (i.e. models with $\tau_3\equiv1$), namely the
$1D$ models I, II and the 2D incompressible model. First note that
these models do not involve the $\tau_3$ direction. Hence we have no
condition corresponding to $(A1)$. However, the specific
phase-transitional elastic potential energy function $W$ has good
growth rate when $|\tau|\rightarrow+\infty$ for $\tau=\tau_1,\tau_2$
or $(\tau_1,\tau_2)$. This will make our functionals coercive. Hence
we have the following:

\begin{theorem} For the incompressible models, there exist
non-constant periodic standing waves respectively if the mean $m$
(either vector or scalar) satisfies $(A3)$ and
$(\frac{2\pi}{T})^2<\lambda(m)$. When $m$ is scalar, assumption
$(A3)$ means $m$ lies in the elliptic region of the viscoelasticity
system $(2.1)$.
\end{theorem}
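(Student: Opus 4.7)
The plan is to reduce the problem to the variational framework already developed in Theorem~\ref{6.13}, exploiting the observation that for incompressible models the physical constraint $\tau_3>0$ is automatic (since $\tau_3\equiv 1$). This entirely removes the obstacle-type complication that forced us to introduce assumptions (A1)--(A2), so I would work directly on the full Hilbert space $H^1_{T,0}$ without any admissible-set restriction. The role of (A1)--(A2) is replaced by the polynomial growth of the specific incompressible phase-transitional potentials.

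Concretely, the first step is to fix $m$ (scalar in Models I, II, or a vector in $\mathbb{R}^2$ in the 2D incompressible model), set $v:=\tau-m$, and introduce the shifted action
$$\cI(v)=\int_0^T \tfrac12|v'|^2\,dx+\int_0^T \bigl(W(v+m)-W(m)-DW(m)\cdot v\bigr)\,dx$$
on $H^1_{T,0}$, exactly as in \eqref{6.4}. The main (but essentially routine) verification is coercivity of $\cI$: inspection of \eqref{5.7}, \eqref{5.13}, \eqref{5.16} shows that in each incompressible case $W$ is a polynomial of degree eight in the surviving components of $\tau$ with strictly positive leading coefficient, so that $W(v+m)-W(m)-DW(m)\cdot v$ grows like $|v|^8$ as $|v|\to\infty$. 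Combined with Wirtinger's inequality on $H^1_{T,0}$, this yields coercivity with no admissible-set constraint. Weak lower semicontinuity of $\cI$ is immediate: the kinetic term is a squared norm, and the nonlinear term is weakly continuous via the compact embedding $H^1_T\hookrightarrow C[0,T]$. The direct method then supplies a minimizer $v_\star\in H^1_{T,0}$, which by the same Euler--Lagrange computation as in the proof of Theorem~\ref{6.13} solves \eqref{5.6} with mean $m$ and yields a $T$-periodic standing wave of \eqref{3.2}.

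To exclude the trivial minimizer $v_\star\equiv 0$, I would repeat verbatim the second-variation argument used in the proof of Theorem~\ref{6.13}: with $v_0$ a unit eigenvector of $D^2W(m)$ corresponding to the negative eigenvalue $-\lambda(m)$ guaranteed by (A3), and test function $\phi(x)=\eta\sin(2\pi x/T)\,v_0\in H^1_{T,0}$, one computes
$$\cI''(0)(\phi,\phi)=\frac{\eta^2 T}{2}\Bigl((2\pi/T)^2-\lambda(m)\Bigr)<0$$
under the hypothesis $(2\pi/T)^2<\lambda(m)$, which forces $v_\star\not\equiv 0$. The scalar interpretation stated at the end of the theorem is a one-line check: $D^2W(m)<0$ gives $\sigma'(m)=-D^2W(m)>0$, whence the flux Jacobian of the inviscid system $\tau_t-u_x=0,\ u_t+\sigma(\tau)_x=0$ has purely imaginary eigenvalues and $m$ lies in the elliptic region.

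The main conceptual point is thus the recognition that incompressibility is exactly what removes the obstacle-type difficulties of the general theory; the only technical obstacle is the coercivity verification, and this is transparent from the leading $|\tau|^8$ asymptotics of the relevant potentials, after which everything else is a literal transcription of the proof of Theorem~\ref{6.13} with (A1)--(A2) deleted.
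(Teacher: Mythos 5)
Your proposal is correct and follows essentially the same route as the paper: the paper's proof likewise notes that the incompressible potentials make $\cI$ coercive and weakly lower semicontinuous on $H^1_{T,0}$ (so the general minimization result, Corollary 6.11, applies without any admissible-set constraint), and then excludes the trivial minimizer by exactly the second-variation computation of Lemma 6.18. Your added details (the degree-eight growth giving coercivity, and the scalar ellipticity check via the flux Jacobian) are consistent with, and merely flesh out, the paper's terser argument.
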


\begin{proof}
It is easy to see the corresponding functionals are
coercive, weakly lower semi-continuous functionals on the reflexive
Banach spaces $H^1_{T,0}$. Hence Corollary 6.11 applies. The
verification that the global minimizers respectively are not zero is
entirely the same as in Lemma 6.18 by considering the second
variation.
\end{proof}

Next, we specify the corresponding conditions in Theorem $7.1$ for
these incompressible models.
\subsection{1D Shear Model I}
The condition is
\begin{equation}\label{7.1}
(\frac{2\pi}{T})^2<-8(3m^{2}+1-\varepsilon^{2})\{2(m^{2}+\varepsilon^2)+(m^{2}-\varepsilon^{2})^{2}\}-32m^{2}(m^2+1-\varepsilon^2)^2.
\end{equation}
In particular, if $m=0$, the condition reads
\begin{equation}\label{7.2}
(\frac{2\pi}{T})^2<-8(1-\varepsilon^{2})(2\varepsilon^2+\varepsilon^{4}).
\end{equation}
Condition $(7.2)$ illustrate that our assumption is not a void
assumption. Also, in the mean zero case, $(7.2)$ holds only if
$\varepsilon>1$. Comparing this with the existence result by
phase-plane analysis (see in particular section 8.1),
we see that these results match very well.
\subsection{1D Shear Model II}
The condition is
\begin{equation}\label{7.3}
(\frac{2\pi}{T})^2<-8(3m^{2}+1-\varepsilon^{2})\{2(m^{2}+\varepsilon^2)+(m^{2}-\varepsilon^{2})^{2}\}-32\{m^{2}(m^2+1-\varepsilon^2)^2-\varepsilon^2\}.
\end{equation}
In particular, if $m=0$, the condition reads
\begin{equation}\label{7.4}
(\frac{2\pi}{T})^2<-8(1-\varepsilon^{2})(2\varepsilon^2+\varepsilon^4)+32\varepsilon^2=8(\varepsilon^6+\varepsilon^4+2\varepsilon^2).
\end{equation}
Condition $(7.4)$ implies in particular that for any
$\varepsilon>0$, we have long-periodic oscillatory waves. Similarly,
for any given $T>0$, we have oscillatory waves as long as
$\varepsilon>0$ large enough. Comparing with the phase-plane
analysis (see section 8.2), we see that the related obtained wave phenomena
match very well.

\subsection{2D Incompressible Shear Model}

In this case $D^2W(m)$ is given by its components
\begin{equation*}
w_{11}:=D_{\tau_1\tau_1}W(m)=8(|m|^{2}+1-\varepsilon^{2}+2m_1^2)\{2(|m|^{2}+\varepsilon^2)+(|m|^{2}-\varepsilon^{2})^{2}\}+32m_{1}^{2}(|m|^2+1-\varepsilon^2)^2;
\end{equation*}
\begin{equation*}
w_{12}=w_{21}:=D_{\tau_1\tau_2}W(m)=16m_{1}m_{2}\{2(|m|^{2}+\varepsilon^2)+(|m|^{2}-\varepsilon^{2})^{2}\}
+32m_{1}m_{2}(|m|^{2}+1-\varepsilon^2)^2;
\end{equation*}
\begin{equation*}
w_{22}:=D_{\tau_2\tau_2}W(m)=8(|m|^{2}+1-\varepsilon^2+2m_{2}^2)[2(|m|^{2}+\varepsilon^2)+(|m|^{2}-\varepsilon^2)^2]
+32[m_{2}^{2}(|m|^{2}+1-\varepsilon^2)^2-\varepsilon^2];
\end{equation*}

The corresponding condition is $(\frac{2\pi}{T})^2<\lambda(m)$. This
is obviously a rather mild condition. To see this, we can consider in
particular the mean $m=(m_1, 0)^T$ or $(0,m_2)^T$. Then the results
on the two 1D incompressible models readily give the conclusion
because
we have diagonal matrices.\\

Based on the analysis of these conditions, we have in particular
($m=0$ case):

\begin{theorem} For the 2D shear model, 1D shear model I and II, we have the
following existence result of periodic viscous traveling/standing waves:\\
(1) Given any $\varepsilon>\varepsilon_0\geq0$, for any $T$
satisfying $T> T(\varepsilon)>0$, system
$(5.6)$ hence $(2.1)$ has a nonconstant periodic solution with
some appropriate integral constant $q$; For the 1D model II we have $\varepsilon_0=0$.\\
(2) Given any $T>0$, for any $\varepsilon$ satisfying
$\varepsilon>\varepsilon(T)>0$, system $(5.6)$ hence $(2.1)$ has
a nonconstant periodic solution with some appropriate integral constant $q$.
\end{theorem}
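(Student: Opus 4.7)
The plan is to deduce Theorem 7.2 directly from Theorem 7.1 specialized to the zero-mean case $m=0$. Theorem 7.1 already gives a nonconstant periodic standing wave whenever $(A3)$ holds at $m$ and $(2\pi/T)^2<\lambda(m)$, where $-\lambda(m)$ is the smallest eigenvalue of $D^2W(m)$. Since incompressibility removes $(A1)$ and $(A2)$, the whole task reduces to (a) computing $D^2 W(0)$ for each of the three incompressible models, (b) identifying the range of $\varepsilon$ for which $(A3)$ is satisfied at $m=0$, and (c) solving the resulting inequality $(2\pi/T)^2<\lambda(0)$ for $T$ and $\varepsilon$.

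For step (a)--(b) I would simply evaluate the formulas collected in Section 5. In 1D Shear Model I, \eqref{5.15} gives $D_{\tau_1\tau_1}W(0)=8(1-\varepsilon^2)(2\varepsilon^2+\varepsilon^4)$, so $(A3)$ holds iff $\varepsilon>1$, and $\lambda(0)=8(\varepsilon^2-1)(2\varepsilon^2+\varepsilon^4)$. In 1D Shear Model II, \eqref{5.18} gives $D_{\tau_2\tau_2}W(0)=8(1-\varepsilon^2)(2\varepsilon^2+\varepsilon^4)-32\varepsilon^2$, which a direct calculation shows equals $-8(2\varepsilon^2+\varepsilon^4+\varepsilon^6)$; thus $(A3)$ holds for every $\varepsilon>0$ and $\lambda(0)=8(2\varepsilon^2+\varepsilon^4+\varepsilon^6)$, giving $\varepsilon_0=0$ as claimed. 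For the 2D incompressible model, \eqref{5.11} evaluated at $m=0$ forces the off-diagonal entry $w_{12}(0)=0$ since $m_1m_2=0$, so $D^2W(0)$ is diagonal with entries equal to the two 1D values above. Hence its smallest eigenvalue is the Model II value, $\lambda(0)=8(2\varepsilon^2+\varepsilon^4+\varepsilon^6)>0$ for all $\varepsilon>0$, so $(A3)$ again holds without restriction.

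Given these expressions, statements (1) and (2) are immediate by elementary manipulation of $(2\pi/T)^2<\lambda(0)$. For (1) I fix $\varepsilon$ in the regime where $\lambda(0)>0$ (i.e.\ $\varepsilon>1$ in Model I, any $\varepsilon>0$ in Model II and the 2D case) and set $T(\varepsilon):=2\pi/\sqrt{\lambda(0)}$; any $T>T(\varepsilon)$ then works. For (2) I fix $T>0$ and observe that $\lambda(0)$ is a polynomial in $\varepsilon$ with leading term $8\varepsilon^6$, so $\lambda(0)\to\infty$ as $\varepsilon\to\infty$; hence the inequality is satisfied for all $\varepsilon$ exceeding some threshold $\varepsilon(T)>0$, and one can even give an explicit lower bound $\varepsilon(T)\sim (2\pi/T)^{1/3}/8^{1/6}$. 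There is no serious obstacle: all the nontrivial analytic work (coercivity, weak lower semicontinuity, ruling out the trivial critical point via the second-variation test along the Fourier mode $\sin(2\pi x/T)\,v_0$) is already absorbed into Theorem 7.1; what remains is bookkeeping on the Hessian formulas of Section 5.
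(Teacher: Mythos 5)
Your proposal is correct and follows essentially the same route as the paper: Theorem 7.1 supplies the existence machinery for the incompressible models, and the content of Theorem 7.3 is exactly the evaluation of $D^2W(0)$ for each model, yielding the paper's conditions \eqref{7.2} and \eqref{7.4} and the thresholds $T(\varepsilon)$, $\varepsilon(T)$. Your Hessian computations at $m=0$ (including the diagonality of the 2D Hessian and the identity $8(1-\varepsilon^2)(2\varepsilon^2+\varepsilon^4)-32\varepsilon^2=-8(2\varepsilon^2+\varepsilon^4+\varepsilon^6)$) match the paper's.
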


\begin{remark}
From the above theorem, we see in particular indeed for the $2D$
shear model, we have infinitely many nontrivial periodic viscous
traveling waves with appropriate corresponding $q$ values. In particular,
we have a sequence of waves with minimum positive period
$T\rightarrow+\infty$.
\end{remark}

\br\label{realrmk3}
Remarks \ref{realrmk} and \ref{realrmk2} show that
solutions of the specific $3$D model of Section \ref{s:ham}
with $m_1$, $m_2$, and $m_3$ nonzero are genuinely $3$-dimensional
in the sense that they are not confined to a plane in $\tau$-space,
and that solutions of the various $2$D models of Section \ref{s:ham}
are genuinely two-dimensional if the means of both components are nonzero.
That is, we have constructed by the variational approach solutions that
are not obtainable by the planar phase-portrait analysis of the
$1$D case (see
just below).
On the other hand, a dimensional count reveals that, generically,
the periodic solutions nearby a $1$D solution are all $1$D, and
likewise
the periodic solutions nearby a $2$D solution are all $2$D.
\er

\section{1D existence by phase-plane analysis.}\label{s:phase}
In this section, we discuss how to generate periodic waves for
1D models. In $(5.6)$, the integral constant
$q=D_{\tau}W(\tau_-)-\tau_-''$.  Here $(\tau_-, \tau''_-)$ is the
vector evaluated at some specific space value $x_0$. If there indeed
exist periodic-T waves, $q=\frac{1}{T}\int_0^T DW(\tau(x))\,dx$.

By the variational formulation and the usual bootstrap argument, we
conclude that the periodic waves are classical solutions of the
system $(5.6)$. For the possible homoclinic/heteroclinic solutions
of $(5.6)$, standard elliptic regularity theory also guarantees
that
these waves are classical solutions. There must be points (say $x_0,
x_1$) in a period [0,T] such that $\tau'(x_0)=0$ and $\tau''(x_1)=0$
, etc., if such periodic solution did exist for $\tau$ scalar. The
reason is that $\tau (x)$ cannot be always monotone and convex in
view of periodicity (this applies to all derivatives). Hence we can make
the integration constant have the form $q=D_{\tau}W(\tau(x_1))$ for
convenience a priori. Then we can show existence, which in turn guarantees
the a priori assumption. Hence, we could assume
$q=D_{\tau}W(\tau_-)$ to show existence. We adopt this convention in
the following analysis.

The guiding principle is that the ODE systems are planar Hamiltonian
systems. To get complete and clear pictures of the phase-portraits,
we just need to specify the ``potential energy" term
$V(\tau,\tau_-)$ in the Hamiltonian $H(\tau,\tau')$.

\subsection{1D Shear Model I: $\tau_2\equiv0; \tau_3\equiv1$.}

In this section, we denote $\tau=\tau_1$. We use similar
notation in other sections. Recall the elastic potential
$W(\tau)= \Big(2\tau_1^2 + 2\eps^2 +(\tau_1^2-\eps^2)^2 \Big)^2$ and
its first and second order derivatives
$$D_{\tau_1}W(\tau)=8\tau_{1}(\tau_1^2+1-\varepsilon^2)\{2(\tau_1^{2}+\varepsilon^2)+(\tau_1^{2}-\varepsilon^{2})^{2}\};$$
$$D_{\tau_1\tau_1}W(\tau)=8(3\tau_1^{2}+1-\varepsilon^{2})\{2(\tau_1^{2}+\varepsilon^2)+(\tau_1^{2}-\varepsilon^{2})^{2}\}+32\tau_{1}^{2}(\tau_1^2+1-\varepsilon^2)^2.$$

The traveling wave ODE and corresponding Hamiltonian system are

\begin{equation}\label{8.1}
\tau''=W'(\tau)-W'(\tau_-).
\end{equation}
\begin{equation}\label{8.2}
\begin{cases}
\tau'=\tau';\\
\tau''=W'(\tau)-W'(\tau_-).
\end{cases}
\end{equation}
We write the Hamitonian system as follows:
\begin{equation}\label{8.3}
\frac{|\tau'|^2}{2}=H(\tau,\tau')-V(\tau;\tau_-)\equiv
E-V(\tau,\tau_-).
\end{equation}
Here $E$ are constants corresponding to energy level curves of
$H(\tau,\tau')$ and $V(\tau,\tau_-):=q\tau-W(\tau)$.

First, we determine the number of equilibria of the Hamiltonian
system, hence focus on the solution of $W'(\tau)=q$.

Note that $W^{\prime}(\tau)$ is an odd function on the real line,
hence we just need to study its graph on the interval $(0,\infty)$.
In view of the expression of $W^{'}(\tau)$, we need consider the
cases: (1) $0<\varepsilon\leq 1$; (2)  $\varepsilon> 1$.

For the case $0<\varepsilon< 1$, we have $W''(\tau)>0$ for $\tau$
real, hence $W'(\tau)$ is strictly monotone increasing and
$$W'(0)=0;\quad W'(\tau)>0,\quad \mbox{for}\, \tau>0;\quad W'(\tau)<0,\quad \mbox{for}\, \tau<0;$$

Hence in this case, for any given $\tau_-$, the solution of
$W^{\prime}(\tau)=W^{\prime}(\tau_-)$ is $\tau_-$ and unique.

Similar analysis holds true for $\varepsilon=1$. Considering the
definition of $V(\tau,\tau_-)$, we have\\

\begin{proposition} When $0<\varepsilon\leq 1$, $V(\tau;\tau_-)$ has
exactly one critical point $\tau_-$, which must be a global
maximum.\end{proposition}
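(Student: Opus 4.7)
The plan is to reduce the statement to strict monotonicity of $W'(\tau)$ on all of $\mathbb{R}$. By our convention $q = W'(\tau_-)$ and $V(\tau;\tau_-) := q\tau - W(\tau)$, so $V'(\tau) = W'(\tau_-) - W'(\tau)$. The critical points of $V$ are therefore exactly the real solutions of $W'(\tau) = W'(\tau_-)$; if $W'$ is strictly increasing, then $\tau = \tau_-$ is the unique such solution.

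To verify strict monotonicity I would inspect the explicit formula \eqref{5.15}:
\[
W''(\tau)=8(3\tau^{2}+1-\varepsilon^{2})\{2(\tau^{2}+\varepsilon^2)+(\tau^{2}-\varepsilon^{2})^{2}\}+32\tau^{2}(\tau^2+1-\varepsilon^2)^2 .
\]
For $0<\varepsilon<1$ both factors of the first summand are strictly positive (since $3\tau^2+1-\varepsilon^2>0$ and the braced term exceeds $2\varepsilon^2>0$) and the second summand is nonnegative, so $W''(\tau)>0$ everywhere. In the borderline case $\varepsilon=1$ the same expression reduces to $W''(\tau)=24\tau^2\{2(\tau^2+1)+(\tau^2-1)^2\}+32\tau^6\ge 0$, vanishing only at $\tau=0$. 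Since a $C^1$ function whose derivative is nonnegative and vanishes only at an isolated point is still strictly increasing, $W'$ is strictly monotone on $\mathbb{R}$ for the full range $0<\varepsilon\le 1$.

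With uniqueness of the critical point in hand, the maximum property is immediate from $V''(\tau)=-W''(\tau)\le 0$, with strict inequality except at most at one point, so $V$ is strictly concave on $\mathbb{R}$ and the unique critical point $\tau_-$ is therefore a global maximum. The only subtlety worth flagging is the borderline case $\varepsilon=1$, where strict convexity of $W$ degenerates at the origin; this is handled by the isolated-zero observation above. I do not expect a genuine obstacle: the argument is a direct one-variable convexity check, and no phase-plane information is needed beyond the explicit formulas \eqref{5.14}--\eqref{5.15}.
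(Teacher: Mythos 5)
Your proposal is correct and follows essentially the same route as the paper: reduce to strict monotonicity of $W'$ via the positivity of $W''$ in \eqref{5.15}, conclude uniqueness of the solution of $W'(\tau)=W'(\tau_-)=q$, and read off the global maximum from concavity of $V$. The only difference is cosmetic --- the paper dismisses the borderline case $\varepsilon=1$ with ``similar analysis holds,'' whereas you spell out the isolated vanishing of $W''$ at $\tau=0$, which is a welcome bit of extra care.
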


\begin{remark} In this case, our Hamiltonian system admits no periodic
orbit for any $\tau_-$ (or equivalently, for any $q$).\end{remark}

Next, consider the case $\varepsilon>1$. In this case, we can see
from the expression of $W'(\tau)$ that $W'(\tau)$ has three distinct
zeros: $-\sqrt{\varepsilon^2-1}$, 0, $\sqrt{\varepsilon^2-1}$. A qualitative graph of $W'(\tau_1)$ is as follows:\\
\includegraphics[scale=0.60]{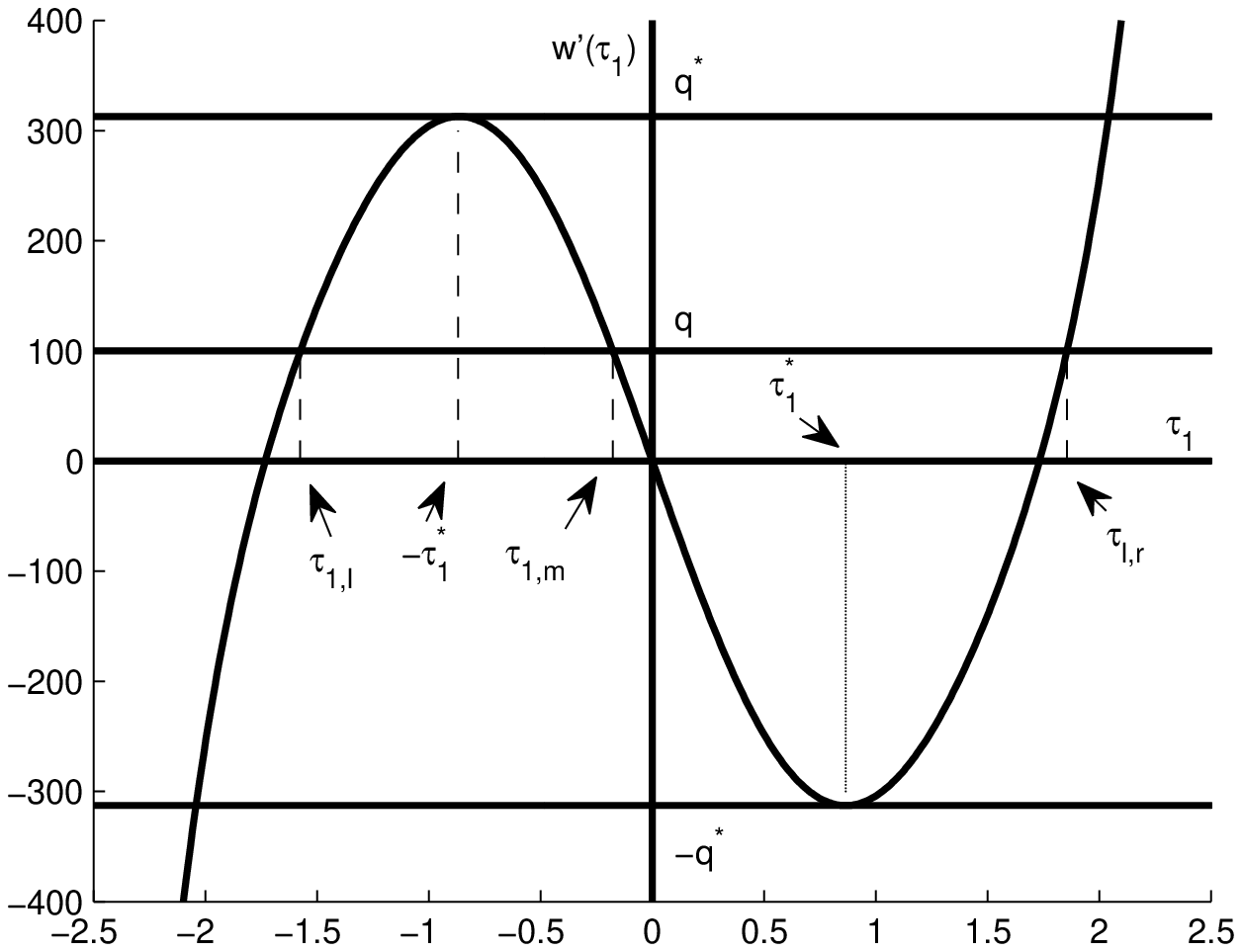}

The corresponding graph for the potential $V(\tau_1,\tau_{1,-})$ is
as follows\\
\includegraphics[scale=0.60]{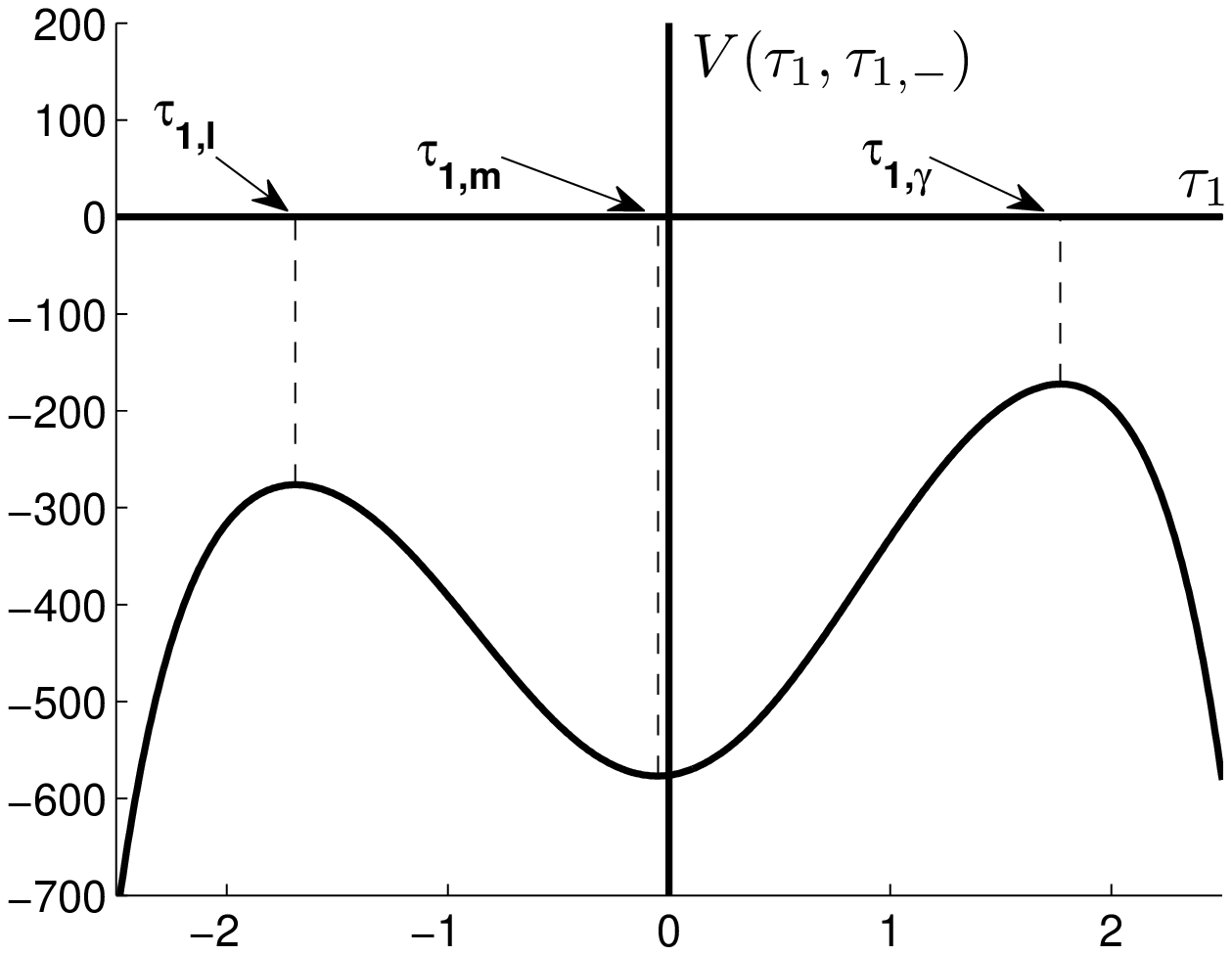}

\begin{proposition} The function $W'(\tau)$ has exactly two critical
points.\end{proposition}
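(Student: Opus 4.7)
The plan is to reduce the count of critical points of $W'$ (i.e., zeros of $W''$) to the count of positive roots of an auxiliary cubic and then finish via Vieta's formulas. Since $W$ is an even polynomial of degree $8$, $W''$ is an even polynomial of degree $6$; its zeros come in symmetric pairs $\pm\tau_0$, and since $W''(0) = 8(1-\varepsilon^2)\varepsilon^2(\varepsilon^2+2) \neq 0$ for $\varepsilon > 1$, the origin is not one of them. It therefore suffices to show that $W''$ has exactly one positive zero.

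With $y := \tau^2$ and $A := \varepsilon^2 > 1$, I would let $g(y) := W''(\tau)/8$ and expand to
\[
 g(y) = 7y^3 + 15(1-A)y^2 + (9A^2 - 6A + 6)y + A(1-A)(A+2).
\]
The first step is to rule out positive roots outside the interval $(0,(A-1)/3)$. Using the natural decomposition
\[
 g(y) = (3y + 1 - A)\bigl[(y-A)^2 + 2(y+A)\bigr] + 4y(y+1-A)^2,
\]
both summands are nonnegative for $y \geq (A-1)/3$, the bracket $(y-A)^2 + 2(y+A)$ is strictly positive, and a quick check at the only candidates for simultaneous vanishing ($y=0$ and $y=A-1$) shows $g > 0$ throughout $[(A-1)/3,\infty)$. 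Combined with $g(0) = A(1-A)(A+2) < 0$, the intermediate value theorem then produces a root in $(0,(A-1)/3)$.

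To conclude uniqueness of the positive root I would argue by contradiction using Vieta's formulas. The cubic $g$ has positive product of roots $A(A-1)(A+2)/7$ and positive sum $15(A-1)/7$. If $g$ had three positive real roots (counting multiplicity), they would all lie in $(0,(A-1)/3)$ by the previous step, forcing their sum to be strictly less than $A-1$; but $15(A-1)/7 > A - 1$, a contradiction. The only other real configuration compatible with a positive product of roots consists of one positive root $p$ and two negative roots $-a,-b$; then the Vieta sum gives $p > a+b$ while the Vieta sum of products gives $ab > p(a+b)$, and combining these with the AM--GM bound $(a+b)^2 \geq 4ab$ yields $ab > 4ab$, also impossible. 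Hence $g$ has exactly one real root, necessarily the positive root found above, and $W''$ has exactly two zeros $\pm\tau_0$, the two critical points of $W'$.

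The main obstacle, and also the decisive maneuver, is confining positive roots to the sharp interval $(0,(A-1)/3)$ via the decomposition above: a direct monotonicity argument on $g$ breaks down for large $\varepsilon$ because $g'$ can acquire two positive real roots, and a direct discriminant calculation for the cubic is unwieldy. By contrast, the decomposition makes the elementary Vieta sum comparison $15(A-1)/7 > A-1$ do all the real work.
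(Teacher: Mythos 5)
Your proof is correct, and it shares its first half with the paper's: both arguments pass to $y=\tau^2$, use the decomposition $g(y)=(3y+1-A)\bigl[(y-A)^2+2(y+A)\bigr]+4y(y+1-A)^2$ to confine any positive root of $g$ to $(0,(A-1)/3)$, and get existence there from $g(0)<0$ and $g((A-1)/3)>0$. Where you diverge is the uniqueness step. The paper stays with the unexpanded $f=g$ and argues by monotonicity: it computes $f'(X)=3[7X^2+10(1-a)X+a^2+2a+2(1-a)^2]$ and shows that either its discriminant is nonpositive (so $f'\ge 0$ everywhere) or both roots of $f'$ lie at or to the right of $(a-1)/3$, so that $f$ is increasing on the relevant interval; contrary to your closing remark, the monotonicity route does not actually break down when $g'$ has two positive real roots --- the paper handles exactly that case by the inequality $\frac{10(a-1)-\sqrt\Delta}{14}\ge\frac{a-1}{3}$, which reduces to $112a^2+784a-140>0$. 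Your alternative is a global root-counting argument on the expanded cubic $7y^3+15(1-A)y^2+(9A^2-6A+6)y+A(1-A)(A+2)$ via Vieta: the sum bound $15(A-1)/7>A-1$ kills three positive roots (all of which would have to sit in $(0,(A-1)/3)$), and the combination of $p>a+b$, $ab>p(a+b)$ (using $9A^2-6A+6>0$), and AM--GM kills the one-positive/two-negative configuration, leaving exactly one real root. Both are elementary and of comparable length; yours has the advantage of avoiding the discriminant/root-location computation for $f'$ and of yielding the slightly stronger conclusion that $g$ has a unique \emph{real} root, while the paper's keeps the factored form of $W''$ throughout and gives the additional qualitative information that $f$ is monotone on $(0,(A-1)/3)$.
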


\begin{proof}
By symmetry, we do the following computations: Denote
$\tau_1^2:=X$ and $\varepsilon^2:=a>1$. We want to show that the function

$$f(X):=(3X+1-a)[2(X+a)+(X-a)^2]+4X(X+1-a)^2$$
has exactly one zero when $X>0$.

First, noting that $f(0)<0$ and $f(\frac{a-1}{3})>0$, we know
that $f(X)$ has a root on $(0,\frac{a-1}{3})$. Also note that $f(X)>0$ on
$[\frac{a-1}{3},\infty)$, hence we just need to show that $f(X)$ admits
a unique zero on $(0,\frac{a-1}{3})$. Computing the derivative, we
have

$$f'(X)=3[7X^2+10(1-a)X+a^2+2a+2(1-a)^2].$$

Denote
$$\Delta=100(1-a)^2-28[a^2+2a+2(1-a)^2].$$

If $\Delta\leq 0$, we know that $f'(X)\geq 0$, hence $f(X)$ is monotone
increasing, which implies that $f(X)$ admits a unique zero;

If $\Delta>0$, we will have two positive roots for $f'(X)=0$ and the
smaller one is $\frac{10(a-1)-\sqrt{\Delta}}{14}$. However, we can
show that $\frac{10(a-1)-\sqrt{\Delta}}{14}\geq\frac{a-1}{3}$, hence the
function $f(X)$ is monotone increasing on the interval
$(0,\frac{a-1}{3})$, which also implies the uniqueness of the zero.
\end{proof}

Now we have a clear picture on the potential $W'(\tau)$ (see the graph for $W'(\tau_1)$).\\

\begin{proposition} When $\varepsilon>1$, the function $W'(\tau)$ is
an odd function with 3 zeros and 2 critical points and goes to
infinity when $\tau\rightarrow+\infty$.\end{proposition}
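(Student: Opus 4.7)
My plan is to verify the four claims one at a time, reading them directly off the explicit factorization
\[
W'(\tau)=8\tau\,(\tau^2+1-\varepsilon^2)\bigl\{2(\tau^2+\varepsilon^2)+(\tau^2-\varepsilon^2)^2\bigr\}.
\]
The oddness is immediate: the first factor $\tau$ is odd, while $\tau^2+1-\varepsilon^2$ and the brace $\{2(\tau^2+\varepsilon^2)+(\tau^2-\varepsilon^2)^2\}$ are polynomials in $\tau^2$, hence even. The product of one odd and two even factors is odd.

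For the zero count, I would observe that the brace is a sum of two nonnegative quantities, with $2(\tau^2+\varepsilon^2)\ge 2\varepsilon^2>0$, so the brace is bounded below by a strictly positive constant and contributes no real zeros. Therefore the real zeros of $W'$ coincide with the zeros of $8\tau(\tau^2+1-\varepsilon^2)$. Since we are assuming $\varepsilon>1$, the equation $\tau^2=\varepsilon^2-1$ has two distinct real roots $\pm\sqrt{\varepsilon^2-1}$, which together with $\tau=0$ give exactly three zeros. For the asymptotic behavior, the same factorization shows $W'(\tau)$ is a polynomial of degree $7$ in $\tau$ with positive leading coefficient (the leading term is $8\tau\cdot\tau^2\cdot\tau^4=8\tau^7$), so $W'(\tau)\to+\infty$ as $\tau\to+\infty$.

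The count of critical points is the only substantive step, but I would not reprove it: it is exactly the content of the immediately preceding Proposition, where it was shown that the positive function $f(X)$ arising from $\tfrac{d}{d\tau}W'(\tau)$ (with $X=\tau^2$) has a unique zero on $(0,\infty)$. By oddness of $W'$, this unique positive critical point gives rise to exactly two critical points $\pm\sqrt{X^*}$ on $\mathbb{R}$, which finishes the proposition.

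Since every assertion is either immediate from the factorization or already established, there is no real obstacle; the proof is essentially a packaging step. If anything, the one place to be slightly careful is to point out \emph{why} the brace factor never vanishes for real $\tau$ (so that it plays no role in counting zeros), since without this observation one might worry that the cubic-in-$\tau^2$ expression inside the brace could contribute additional real roots; the bound $2(\tau^2+\varepsilon^2)\ge 2\varepsilon^2>0$ makes this a one-line check.
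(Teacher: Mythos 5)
Your proposal is correct and follows essentially the same route as the paper, which states this proposition as a summary of facts already established in the surrounding discussion: oddness and the three zeros $0,\pm\sqrt{\varepsilon^2-1}$ are read off the factorization (with the bracketed factor strictly positive), the two critical points are exactly the content of the immediately preceding proposition on $f(X)$, and the growth at infinity comes from the leading term $8\tau^7$. Your explicit remark that the bracket is bounded below by $2\varepsilon^2>0$ is a small but worthwhile addition that the paper leaves implicit.
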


Denote the two critical values
of $W'(\tau)$ as $q^*>0$ and $-q^*$, for convenience denoting $Q=q^*$.
Then we have the following property:\\

\begin{proposition} Assume $\varepsilon>1$. When $|q|> Q$, the
equation $W'(\tau)=q$ has exactly one solution; When $|q|= Q$, the
equation $W'(\tau)=q$ has exactly 2 solutions; When $|q|< Q$, the
equation $W'(\tau)=q$ has exactly 3 solutions.\end{proposition}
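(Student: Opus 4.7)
The plan is to reduce the claim to a count of horizontal-line intersections with the graph of the odd function $W'$, whose shape has already been pinned down by the preceding three propositions. Specifically, we know: (i) $W'$ is odd; (ii) $W'$ has exactly three zeros, namely $-\sqrt{\varepsilon^2-1},\,0,\,\sqrt{\varepsilon^2-1}$; (iii) $W'$ has exactly two critical points; and (iv) $W'(\tau)\to+\infty$ as $\tau\to+\infty$. By oddness it suffices to treat $q\ge 0$; the case $q<0$ then follows from the equivalence $W'(\tau)=q\iff W'(-\tau)=-q$, which preserves the number of solutions.

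First I would pin down the precise monotonicity structure on $(0,\infty)$ using the sign analysis of $W'(\tau)=8\tau(\tau^{2}+1-\varepsilon^{2})\{2(\tau^{2}+\varepsilon^{2})+(\tau^{2}-\varepsilon^{2})^{2}\}$, whose last brace is strictly positive. For $\varepsilon>1$ the factor $\tau^{2}+1-\varepsilon^{2}$ changes sign at $\sqrt{\varepsilon^{2}-1}$, so on $(0,\infty)$ the function $W'$ is negative on $(0,\sqrt{\varepsilon^{2}-1})$, vanishes at the endpoints, and is positive on $(\sqrt{\varepsilon^{2}-1},\infty)$. Combined with the previous proposition giving exactly one critical point on $(0,\infty)$, this forces the single critical point $\tau^{*}\in(0,\sqrt{\varepsilon^{2}-1})$ to be a strict local minimum with $W'(\tau^{*})=-Q<0$, so that $W'$ decreases strictly on $(0,\tau^{*})$ from $0$ to $-Q$ and increases strictly on $(\tau^{*},\infty)$ from $-Q$ to $+\infty$. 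By oddness the corresponding picture on $(-\infty,0)$ is: $W'$ strictly increases on $(-\infty,-\tau^{*})$ from $-\infty$ to $+Q$ and strictly decreases on $(-\tau^{*},0)$ from $+Q$ to $0$.

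With this shape in hand, the counting is immediate from strict monotonicity on each branch together with the Intermediate Value Theorem. For $q>Q$, the line $y=q$ meets $W'$ only on the unbounded increasing branch $(\sqrt{\varepsilon^{2}-1},\infty)$, giving exactly one solution; the negative branch reaches maximum $Q<q$, so contributes nothing. For $q=Q$, there is one solution on the unbounded positive branch and the tangential solution $\tau=-\tau^{*}$ on the negative side, giving exactly two. For $0<q<Q$, the positive branch again contributes one solution, while on the negative side $y=q$ crosses both the strictly increasing piece from $-\infty$ to $+Q$ and the strictly decreasing piece from $+Q$ to $0$, giving two more, for a total of three. The case $q=0$ is given directly by the three zeros already identified, matching $|q|<Q$. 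The remaining cases $q<0$ then follow from the odd symmetry noted above. No real obstacle arises beyond a careful bookkeeping of which monotone branch meets each level set.
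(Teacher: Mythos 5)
Your proof is correct and follows exactly the route the paper intends: the paper states this proposition without proof as an immediate consequence of the preceding propositions (oddness of $W'$, its three zeros, its two critical points, and its growth at infinity), and your argument is simply the careful write-out of that intersection count, with the monotone-branch structure on $(0,\tau^*)$, $(\tau^*,\infty)$ and their reflections correctly justified from the sign analysis and the uniqueness of the critical point on $(0,\infty)$. No gaps; the bookkeeping of which branch meets each level $q$ is accurate in all three regimes $|q|>Q$, $|q|=Q$, $|q|<Q$.
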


As the solutions of $W'(\tau)=q$ correspond to the critical points
of $V(\tau;\tau_-)$, we have:\\

\begin{theorem} For $|q|\geq Q$, the Hamiltonian system admits
no periodic orbit; For $|q|<Q$, the Hamiltonian system admits a
family of nontrivial periodic orbits. Further if $q=0$, the
Hamiltonian system also admits a heteroclinic orbit.\end{theorem}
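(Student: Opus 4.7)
My plan is to use the planar Hamiltonian structure directly. Writing the system as $\tau''=-V'(\tau)$ with $V(\tau)=q\tau-W(\tau)$ and conserved energy $H(\tau,\tau')=\tfrac{1}{2}|\tau'|^2+V(\tau)$, nontrivial periodic orbits correspond exactly to simple closed level sets of $H$, which encircle strict local minima of $V$ (the centers of the flow); a heteroclinic orbit appears when two distinct saddles share a common level set of $H$.

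First I would classify all equilibria by combining the preceding proposition (on the number of solutions of $W'(\tau)=q$) with the sign of $V''=-W''$. For $|q|<Q$, the three roots of $W'(\tau)=q$ split into two ``outer'' roots lying in the monotone-increasing branches of $W'$ (where $W''>0$, so $V''<0$: local maxima of $V$, i.e.\ saddles of the flow) and one ``middle'' root in the decreasing branch of $W'$ (where $W''<0$, so $V''>0$: a local minimum of $V$, i.e.\ a center). For $|q|>Q$, the sole root of $W'(\tau)=q$ lies in an outer monotone branch and produces a lone saddle with no center anywhere. For $|q|=Q$, the two roots are one nondegenerate outer saddle together with a degenerate critical point at which $V'$ touches zero without changing sign, so $V$ is monotone through that point and contributes no extremum.

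Second, for $|q|<Q$ I would invoke the classical fact that in a real-analytic planar Hamiltonian system every nondegenerate center is surrounded by a one-parameter family of simple closed level sets of $H$, parametrized by energy $E$ running from the center value up to (but not including) the minimum of the two saddle energies $V(\tau_1^\pm)$. This yields the asserted family of periodic orbits. Conversely, for $|q|\geq Q$ the absence of any local minimum of $V$, coupled with $V(\tau)\to-\infty$ as $|\tau|\to\infty$ (consequence of the polynomial growth of $W$), forces every non-equilibrium level set of $H$ to be unbounded, ruling out nontrivial periodic orbits.

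Finally, for $q=0$ I would use the evenness of $W$ (equivalently, oddness of $W'$) to place the two saddles at $\pm\sqrt{\varepsilon^2-1}$ on the \emph{same} energy level $E^{*}=-W(\sqrt{\varepsilon^2-1})$. Since $V(\tau)=-W(\tau)<E^{*}$ on the open interval $(-\sqrt{\varepsilon^2-1},\sqrt{\varepsilon^2-1})$ (verified directly from the monotonicity of $W$ on $[0,\sqrt{\varepsilon^2-1}]$ together with its evenness), the level set $\{H=E^{*}\}$ contains a trajectory on which $\tau'=\pm\sqrt{2(E^{*}-V(\tau))}>0$ between the saddles, vanishing precisely at the endpoints; this is the sought heteroclinic orbit. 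The main technical point will be the borderline case $|q|=Q$: one must verify that the degenerate critical point of $V$ produces no closed level curve, which follows immediately from $V$ being strictly monotone in a punctured neighborhood of that point, so every nearby orbit has $\tau$ monotone on each half $\tau'\gtrless 0$ and is unbounded.
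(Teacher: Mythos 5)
Your proposal is correct and takes essentially the same route as the paper: a potential-well/phase-plane analysis of the planar Hamiltonian $H=\tfrac12|\tau'|^2+V(\tau)$ with $V(\tau)=q\tau-W(\tau)$, classifying the critical points of $V$ via the preceding propositions (two local maxima and one local minimum for $|q|<Q$, a single global maximum for $|q|\geq Q$) and obtaining the periodic family around the center and the heteroclinic from the equal saddle energies at $q=0$. Your write-up merely supplies more detail than the paper's (the borderline case $|q|=Q$, the explicit energy range of the periodic family, and the unboundedness of level sets), but the underlying argument is the same.
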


\begin{proof}
For $|q|\geq Q$, we know that $V(\tau;\tau_-)$ has a global maximum
and hence the Hamiltonian admits no periodic orbit. For the case
$|q|<Q$, we know that the potential $V(\tau;\tau_-)$ must have
exactly 3 critical points with 2 local maxima and 1 local minimum.
Also, we know that $V(\tau;\tau_-)$ has strictly lower energy at the
local minimum than at the two local maxima. Hence the existence of a
family of periodic orbits follows. Further, when $q=0$, the energies
at the two local maxima of $V(\tau;\tau_-)$ are the same, hence we
get an heteroclinic orbit.
\end{proof}

\begin{remark}
We may compare the two energy values of
$V(\tau;\tau_-)$ at the two local maxima. If they are equal (when
$q=0$ in particular), we have a heteroclinic orbit. In general, they
are not equal to each other, which yields a homoclinic
orbit.
\end{remark}

\subsection{1D Shear Model II: $\tau_1\equiv0; \tau_3\equiv1$.}

Recall the elastic potential
$$W(\tau)= \Big(2(\tau_2-\varepsilon)^2+(\tau_2^2-\eps^2)^2
\Big)\times \Big(2(\tau_2+\varepsilon)^2+(\tau_2^2-\eps^2)^2\Big)$$
and the relevant derivatives
\begin{equation*}
D_{\tau_2}W(\tau)=8\tau_{2}(\tau_2^2+1-\varepsilon^2)\{2(\tau_2^{2}+\varepsilon^2)+(\tau_2^{2}-\varepsilon^{2})^{2}\}-32\tau_2\varepsilon^2;
\end{equation*}
\begin{equation*}
D_{\tau_2\tau_2}W(\tau)=8(3\tau_2^{2}+1-\varepsilon^{2})\{2(\tau_2^{2}+\varepsilon^2)+(\tau_2^{2}-\varepsilon^{2})^{2}\}+32\{\tau_{2}^{2}(\tau_2^2+1-\varepsilon^2)^2-\varepsilon^2\}.
\end{equation*}

As above, we list the Hamiltonian system and the potential
$V(\tau;\tau_-)$.  The system is:

\begin{equation}\label{8.4}
\begin{cases}\tau'=\tau';\\
\tau"=W'(\tau)-W'(\tau_-).
\end{cases}
\end{equation}

The potential is $V(\tau;\tau_-)=q\tau -W(\tau)$.

\begin{remark} There is a slight difference with the $1D$ shear model I in
the function $W'(\tau)$. Because of this difference, we do not need
to restrict the positive number $\varepsilon$ to get periodic orbits
for the parameter $q$ in proper range.  The conclusions are
completely the same when $1\geq\varepsilon>0$ as in $1D$ shear model
I when $\varepsilon>1$.\end{remark} We have the following:

\begin{proposition} When $1\geq\varepsilon>0$, the behavior of the function
$W'(\tau)$ is the same as that of the function $W'(\tau)$ in the $1D$ shear
model I when $\varepsilon>1$. In fact, $W''(\tau)$ is monotone
increasing in this case for $\tau>0$.\end{proposition}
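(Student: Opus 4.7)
The plan is to verify directly that the three qualitative features listed for Model I at $\varepsilon>1$ (oddness, three zeros, two critical points, growth to infinity) carry over to Model II at $0<\varepsilon\le 1$, using the stronger assertion that $W''$ is monotone increasing on $\tau>0$ as the key lever.

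First I would record the structural symmetry: the Model II potential $W$ is invariant under $\tau_2\mapsto -\tau_2$ (the two factors in the product formula swap), so $W'$ is odd, $W''$ is even, and it suffices to analyze $\tau>0$. Then I would evaluate $W''(0)=-8\varepsilon^2(2+\varepsilon^2+\varepsilon^4)<0$ (the $-32\varepsilon^2$ from the second derivative formula dominates the non-negative remainder when $\varepsilon\le 1$, since $1-\varepsilon^2\ge 0$ forces the other summands to be non-negative at $\tau=0$), and observe that $W''(\tau)\to +\infty$ as $\tau\to\infty$ because $W''$ is a polynomial in $\tau^2$ of positive leading coefficient.

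The central computation is to prove that $W''(\tau)$ is strictly increasing on $\tau>0$. Setting $y=\tau^2$ and writing $W''(\tau)=g(y)$, I would expand
\begin{equation*}
g(y)=56y^{3}+120(1-\varepsilon^{2})y^{2}+(48-48\varepsilon^{2}+72\varepsilon^{4})y-8\varepsilon^{2}(2+\varepsilon^{2}+\varepsilon^{4}),
\end{equation*}
and compute
\begin{equation*}
g'(y)=168 y^{2}+240(1-\varepsilon^{2})y+(48-48\varepsilon^{2}+72\varepsilon^{4}).
\end{equation*}
The constant term $48-48\varepsilon^{2}+72\varepsilon^{4}$ is a quadratic in $\varepsilon^{2}$ with negative discriminant, hence uniformly positive; the linear coefficient $240(1-\varepsilon^{2})$ is non-negative because $\varepsilon\le 1$; and the leading term is positive. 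Thus $g'(y)>0$ for all $y\ge 0$, which gives $W'''(\tau)=2\tau\,g'(\tau^{2})>0$ for $\tau>0$, i.e.\ $W''$ is strictly increasing on $(0,\infty)$. Combined with $W''(0)<0<W''(\infty)$, this yields a \emph{unique} positive zero of $W''$, hence exactly one critical point of $W'$ on $(0,\infty)$.

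Finally, from $W'(0)=0$, $W''(0)<0$ and the fact that $W'$ has a single local minimum on $(0,\infty)$ (at the zero of $W''$) and tends to $+\infty$, it follows that $W'$ decreases from $0$ to a negative value and then increases monotonically to $+\infty$, crossing zero exactly once on $(0,\infty)$. Using oddness, this gives three zeros ($-\tau^{\ast},0,\tau^{\ast}$) and two critical points ($\pm$ the zero of $W''$) of $W'$, reproducing the qualitative picture established for Model I with $\varepsilon>1$. The only step requiring real work is the polynomial expansion and the sign verification of $g'(y)$; the hypothesis $\varepsilon\le 1$ enters precisely through the non-negativity of $120(1-\varepsilon^{2})$, which is what guarantees the monotonicity of $W''$ and hence the full qualitative picture without further case analysis.
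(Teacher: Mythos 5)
Your computation is correct and complete: with $y=\tau^2$ and $a=\varepsilon^2$ one indeed gets $W''(\tau)=g(y)=56y^3+120(1-a)y^2+(72a^2-48a+48)y-8a(a^2+a+2)$; the constant term of $g'(y)=168y^2+240(1-a)y+(72a^2-48a+48)$ is a quadratic in $a$ with discriminant $2304-13824<0$, hence strictly positive, and $240(1-a)\ge 0$ for $\varepsilon\le 1$, so $g'>0$ on $y\ge0$, giving monotonicity of $W''$ on $(0,\infty)$; combined with $W''(0)=-8\varepsilon^2(2+\varepsilon^2+\varepsilon^4)<0$ and $W''\to+\infty$, this yields a unique positive critical point of $W'$ and, by oddness, the three-zeros/two-critical-points picture matching Model I at $\varepsilon>1$. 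Be aware that the paper states this proposition \emph{without proof}, only remarking beforehand that the second derivatives of the two models differ by the constant $-32\varepsilon^2$; so your argument is a genuine supplement rather than a reproduction. It also differs in structure from the proof the paper does give for the analogous Model I claim (exactly two critical points of $W'$): there the author substitutes $X=\tau_1^2$, reduces to showing a cubic $f(X)$ has a unique zero on $(0,\frac{a-1}{3})$, and argues via the roots of $f'$ and a comparison with $\frac{a-1}{3}$, with a case split on the discriminant. Your route proves the stronger second assertion of the proposition (monotonicity of $W''$ itself) directly, in one stroke, with the hypothesis $\varepsilon\le1$ entering only through the sign of the middle coefficient of $g'$; this is arguably cleaner than the paper's own Model I argument. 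One cosmetic point: your parenthetical ``domination'' justification of $W''(0)<0$ is loosely worded (a negative term plus a non-negative term need not be negative without the quantitative bound $(1-a)a(2+a)<4a$), but the exact value $-8\varepsilon^2(2+\varepsilon^2+\varepsilon^4)$ that you also record settles it, and in fact shows $W''(0)<0$ for every $\varepsilon>0$, not only $\varepsilon\le1$.
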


\begin{remark} For the range $\varepsilon>1$, numerics suggest
that the behaviors are also the same as we may show that the
function $W'(\tau)$ has exactly three solutions and two critical
points. We have a small problem to verify this by direct computation
though we just need to show that $f(X)>0$ evaluated at the larger
root of $f'(X)$ ($f(X)$ is defined similar as in $1D$ shear model I
as the second derivatives of the two potentials differ with a
constant $32\varepsilon^2$. Even without this, we still can conclude
the existence of periodic orbits since the potential
$V(\tau;\tau_-)$ admits a minimum. Together with the existence
obtained by variational argument, we know that there are still
infinitely many nontrivial periodic waves for any
$\varepsilon>0.$\end{remark}

\subsection{1D Compressible Model III} For this model, we need to pay
special attention to the physical restriction $\tau_3>0$
when we do the phase-plane analysis. To find physical waves, we use
a continuity argument and a simple comparison criterion.

In this case $\tau_1=\tau_2\equiv 0$; let $\tau=\tau_3$. The
potential and its derivatives are given below. The elastic potential
becomes $W(\tau)= \Big(2\eps^2 +(\tau_3^2-1-\eps^2)^2 \Big)^2$. Its
first and second order derivative are
$D_{\tau_3}W(\tau)=8\tau_{3}(\tau_3^{2}-1-\varepsilon^{2})\{2\varepsilon^2+(\tau_3^{2}-1-\varepsilon^{2})^{2}\}$
and
\begin{equation*}
w_{33}:=D_{\tau_3\tau_3}W(\tau)=8(3\tau_3^2-1-\varepsilon^2)\{2\varepsilon^2+(\tau_3^{2}-1-\varepsilon^{2})^{2}\}
+32\tau_{3}^2(\tau_3^{2}-1-\varepsilon^2)^2.
\end{equation*}

We write $V=V(\tau,q,\varepsilon):=q\tau-W(\tau)$ in this section to
emphasize the analytical dependence of $V$ on the parameters $q$ and
$\varepsilon$ (because $V$ is a polynomial). As in previous sections,
we see:
\begin{proposition}
(1)$ W'(\tau)= q$  always has one, two or three roots when $|q|>Q$,
$|q|=Q$ or $|q|<Q$ for some positive $Q$. In the case that $ W'(\tau)= q$
has 3 distinct roots, we denote them from small to large by $\tau_l,
\tau_m$ and $\tau_r$.

(2) $W'(\tau)$ has exactly two critical points.
\end{proposition}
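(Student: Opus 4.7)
The plan is to mirror the proof of Proposition 8.3 for 1D Shear Model I, replacing $1-\varepsilon^2$ by $-(1+\varepsilon^2)$. Because $W'(\tau) = 8\tau(\tau^2 - 1 - \varepsilon^2)\{2\varepsilon^2 + (\tau^2 - 1 - \varepsilon^2)^2\}$ is odd in $\tau$, it suffices to analyze $W'$ on $[0,\infty)$. Setting $X := \tau^2$ and $a := 1 + \varepsilon^2 > 0$, the stated formula for $W''(\tau)/8$ becomes
$$g(X) := (3X-a)\bigl[2\varepsilon^2 + (X-a)^2\bigr] + 4X(X-a)^2,$$
and the main task for (2) is to show $g$ has exactly one zero on $(0, \infty)$.

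Direct evaluation gives $g(0) = -a(2\varepsilon^2 + a^2) < 0$ and $g(a/3) = (4a/3)(2a/3)^2 = 16a^3/27 > 0$, so $g$ has at least one zero in $(0, a/3)$; for $X \geq a/3$ both summands of $g$ are nonnegative and cannot vanish simultaneously, hence $g(X) > 0$ on $[a/3, \infty)$. For uniqueness in $(0, a/3)$, compute $g'(X) = 3\bigl(7X^2 - 10aX + 3a^2 + 2\varepsilon^2\bigr)$, whose discriminant is $\Delta = 16a^2 - 56\varepsilon^2$. If $\Delta \leq 0$ then $g' \geq 0$ and we are done; if $\Delta > 0$, the smaller critical point of $g$ is $X_- = (5a - \sqrt{4a^2 - 14\varepsilon^2})/7$, and the inequality $X_- \geq a/3$ reduces after clearing denominators and squaring to $28a^2 + 126\varepsilon^2 \geq 0$, which is automatic. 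Hence $g$ is strictly increasing on $(0, a/3)$ and has a unique zero $X^*$ there, yielding the single critical point $\tau^* := \sqrt{X^*} \in (0, \sqrt{a/3})$ of $W'$ on $(0,\infty)$. By oddness, $W'$ has exactly the two critical points $\pm \tau^*$ on $\mathbb{R}$, proving (2).

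For (1), observe that $W''$ is even (since $W$ is even in $\tau$) and $W''(0) = -8a(2\varepsilon^2 + a^2) < 0$, while $W''$ vanishes on $\mathbb{R}$ only at $\pm \tau^*$; hence $W'' < 0$ on $(-\tau^*, \tau^*)$ and $W'' > 0$ on each of $(-\infty, -\tau^*)$ and $(\tau^*, \infty)$. Thus $W'$ strictly decreases on $(-\tau^*, \tau^*)$ from its local maximum $Q := W'(-\tau^*) > 0$ to its local minimum $-Q = W'(\tau^*)$, and is strictly increasing on each outer interval with $W'(\tau) \to \pm \infty$ as $\tau \to \pm \infty$. Reading off the graph, the horizontal line $y = q$ meets it in exactly one, two, or three points according as $|q| > Q$, $|q| = Q$, or $|q| < Q$; in the last case label them $\tau_l < \tau_m < \tau_r$.

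The only nonroutine step is the algebraic estimate $X_- \geq a/3$ needed to exclude multiple zeros of $g$ in $(0, a/3)$; I expect this to be the main obstacle, and the remaining pieces reduce to polynomial evaluation and reading off the monotonicity picture of $W'$.
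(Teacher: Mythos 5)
Your proof is correct and follows exactly the route the paper intends: the paper states this proposition with only the remark ``As in previous sections, we see,'' deferring to the argument of Proposition 8.3 for Shear Model I, and your adaptation (with $a=1+\varepsilon^2$, the sign flip in $3X-a$, and the modified bracket $2\varepsilon^2+(X-a)^2$) carries that template through correctly, including the key estimate $X_-\geq a/3$ and the observation that here no restriction on $\varepsilon>0$ is needed since $W'$ always has the three zeros $0,\pm\sqrt{1+\varepsilon^2}$. All the algebra checks out ($g'(X)=3(7X^2-10aX+3a^2+2\varepsilon^2)$, $\Delta=16a^2-56\varepsilon^2$, and $X_-\geq a/3$ reducing to $28a^2+126\varepsilon^2\geq0$), so this supplies precisely the details the paper omits.
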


As before, in order to analyze the existence of periodic or
homoclinic/heteroclinic waves, we just need to consider the
potential energy $V(\tau, q,\varepsilon)$. Further, in order to have
physical waves, we need necessarily that $ - Q < q <
 0 $. In this situation, the two roots $\tau_m$ and $\tau_r$ of $W' (\tau)=q$ are positive.
Noticing that $\tau=\tau_m$ is a local minimizer of
$V(\tau,q,\varepsilon)$, there is a periodic annulus around
$\tau_m$. Hence we have the following proposition:

\begin{proposition}
When $ - Q< q < 0 $, there always exists a periodic annulus.
\end{proposition}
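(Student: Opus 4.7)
The plan is to exploit the Hamiltonian structure established in Section \ref{s:ham} together with the preceding proposition's characterization of $\tau_m$ as a local minimizer of $V(\tau,q,\varepsilon) = q\tau - W(\tau)$. Recall that the flow lies on level curves of
\[
H(\tau,\tau') = \tfrac{1}{2}|\tau'|^2 + V(\tau,q,\varepsilon),
\]
so it suffices to show that $(\tau_m,0)$ is a non-degenerate local minimum of $H$ lying in the physical region $\{\tau>0\}$; the periodic annulus will then arise as a one-parameter family of small closed level sets surrounding this center.

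First, I would verify non-degeneracy explicitly. Since $-Q<q<0$, the two positive roots $\tau_m<\tau_r$ of $W'(\tau)=q$ sit respectively on the decreasing and increasing branches of $W'$ between $\tau=0$ and $\tau=\sqrt{1+\varepsilon^2}$; hence $W''(\tau_m)<0$, which yields
\[
V''(\tau_m) = -W''(\tau_m) > 0.
\]
Combined with $\partial^2_{\tau'\tau'}H \equiv 1$, the Hessian of $H$ at $(\tau_m,0)$ is diagonal and positive definite, so $(\tau_m,0)$ is a strict, non-degenerate local minimum of $H$.

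Second, I would invoke the standard Morse-theoretic/Poincaré argument for planar Hamiltonian systems: at a non-degenerate minimum of a smooth Hamiltonian, the level sets $\{H = E\}$ for
\[
V(\tau_m,q,\varepsilon) < E < V(\tau_m,q,\varepsilon) + \delta
\]
are simple closed curves encircling $(\tau_m,0)$, and each such curve is a periodic orbit of the flow (trajectories cannot terminate on it since $\nabla H \neq 0$ there). This gives a one-parameter family of periodic orbits, i.e.\ a periodic annulus. Moreover, one can enlarge $\delta$ until the level curve either meets the separatrix through the adjacent saddle $(\tau_r,0)$ (where $V$ has its local maximum) or touches the boundary $\{\tau=0\}$ of the physical region, whichever occurs first; up to that threshold every level set remains a closed orbit.

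The only potential subtlety, and what I would flag as the ``main obstacle,'' is the physical constraint $\tau>0$: strictly speaking we need the periodic orbits to lie in $\{\tau>0\}$. Since $\tau_m>0$, continuity of the level sets in $E$ ensures that for $E$ sufficiently close to $V(\tau_m,q,\varepsilon)$ the closed orbits are confined to an arbitrarily small neighborhood of $(\tau_m,0)$, hence automatically satisfy $\tau>0$. This gives a nontrivial physical periodic annulus for every $q\in(-Q,0)$, completing the proof.
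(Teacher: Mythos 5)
Your proposal is correct and follows essentially the same route as the paper: the paper simply observes that $\tau_m>0$ is a local minimizer of $V(\tau,q,\varepsilon)=q\tau-W(\tau)$ and concludes that a periodic annulus surrounds the center $(\tau_m,0)$. You merely make explicit the non-degeneracy ($W''(\tau_m)<0$ on the decreasing branch, so $V''(\tau_m)>0$) and the fact that small orbits stay in the physical region $\tau>0$, both of which the paper leaves implicit.
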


To show existence of physical homoclinic orbit, we just need to
compare the values of $V(0, q,\varepsilon)=:V(0)$ and $V (\tau_r,
q,\varepsilon)=:V(r)$. We have

\begin{proposition}
Let $-Q<q<0$. If $V (0) > V (r)$, there is a physical homoclinic
orbit; If $V (0)\leq V (r)$, there is no physical homoclinic orbit.
\end{proposition}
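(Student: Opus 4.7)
The plan is to exploit the Hamiltonian first integral $H(\tau,\tau') = \tfrac{1}{2}|\tau'|^2 + V(\tau,q,\varepsilon)$ already recorded in Section~\ref{s:ham}, so that orbits of \eqref{5.6} are precisely the level sets $\{H\equiv E\}$. Since a homoclinic orbit is, by definition, a non-equilibrium trajectory whose $\alpha$- and $\omega$-limits coincide at a saddle, and saddles of this planar system are exactly the local maxima of $V(\cdot,q,\varepsilon)$, the entire question reduces to understanding the connected component of the level set $\{V(\tau)\le V(\tau_*)\}$ that contains the saddle $\tau_*$ and determining whether that component is contained in the physical half-line $\tau>0$.

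First I would pin down the signs of the three roots of $W'(\tau)=q$ when $-Q<q<0$. From the odd symmetry of $W'$ and the computation $W''(0)=-8(1+\varepsilon^2)\bigl(2\varepsilon^2+(1+\varepsilon^2)^2\bigr)<0$, the middle root $\tau_m$ lies in $(0,\sqrt{1+\varepsilon^2})$, the right root $\tau_r$ lies in $(\sqrt{1+\varepsilon^2},\infty)$, and the left root $\tau_l$ lies in $(-\infty,-\sqrt{1+\varepsilon^2})$, so in particular $\tau_l<0<\tau_m<\tau_r$. The second-derivative test ($V''=-W''$) then identifies $\tau_l$ and $\tau_r$ as local maxima (saddles of the Hamiltonian flow) and $\tau_m$ as a local minimum (center). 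Any homoclinic from $\tau_l$ is already non-physical because $\tau_l<0$, so the only candidate for a physical homoclinic is one based at $\tau_r$.

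The sufficiency direction is then a direct phase-plane argument. Assume $V(0)>V(\tau_r)$. Between the consecutive critical points $\tau_l$ and $\tau_m$, the function $V$ is strictly monotone, hence strictly decreasing from $V(\tau_l)$ to $V(\tau_m)$; since $0\in(\tau_l,\tau_m)$ and $V(\tau_m)<V(\tau_r)<V(0)$, the intermediate value theorem yields a unique $\tau_\ast\in(0,\tau_m)$ with $V(\tau_\ast)=V(\tau_r)$. The component of $\{V\le V(\tau_r)\}$ containing $\tau_r$ is then the closed interval $[\tau_\ast,\tau_r]\subset(0,\infty)$, and the corresponding level curve of $H$ at $E=V(\tau_r)$ consists of $\tau_r$ together with a loop projecting onto $[\tau_\ast,\tau_r]$. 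Because $V'(\tau_\ast)\neq0$, the loop is a smooth homoclinic orbit, entirely contained in the physical region $\tau>0$.

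For the necessity direction, assume $V(0)\le V(\tau_r)$ and consider the connected component of $\{V\le V(\tau_r)\}$ based at $\tau_r$: travelling leftward, $V$ remains $\le V(\tau_r)$ throughout $[\tau_m,\tau_r]$ and, since on $(\tau_l,\tau_m)$ the function $V$ is monotone increasing as $\tau$ decreases and $V(0)\le V(\tau_r)$, the set $\{V\le V(\tau_r)\}$ extends at least down to $\tau=0$ without a turning point in $(0,\tau_m)$. The corresponding level curve of $H$ therefore reaches $\tau=0$ with $\tfrac{1}{2}|\tau'|^2=V(\tau_r)-V(0)\ge 0$, and since $V'(0)\neq 0$ under our hypotheses it reaches the boundary in finite parameter time; hence the orbit leaves the physical half-plane and no physical homoclinic to $\tau_r$ can exist. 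The main subtlety — and the one place I would be careful in writing the full proof — is the marginal case $V(0)=V(\tau_r)$, where one must rule out a degenerate ``homoclinic to the wall''; this follows because $\tau=0$ is not an equilibrium of the Hamiltonian flow, so the orbit does not stay there but exits in finite time.
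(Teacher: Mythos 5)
Your argument is correct and is exactly the potential-well/phase-plane reasoning the paper intends: the paper states this proposition without proof, relying on the preceding observation that one ``just needs to compare $V(0)$ and $V(\tau_r)$,'' and your write-up supplies precisely those details (classification of $\tau_l,\tau_m,\tau_r$ as saddle/center/saddle, the turning point $\tau_\ast$ via the intermediate value theorem, and the finite-time exit through $\tau=0$ including the marginal case $V(0)=V(\tau_r)$).

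Two small inaccuracies should be corrected, though neither affects the logic. First, $\tau_r$ does \emph{not} lie in $(\sqrt{1+\varepsilon^2},\infty)$: since $W'(\tau)=8\tau(\tau^2-1-\varepsilon^2)\{2\varepsilon^2+(\tau^2-1-\varepsilon^2)^2\}$ is negative exactly on $(0,\sqrt{1+\varepsilon^2})$ for $\tau>0$, both positive roots of $W'(\tau)=q<0$ satisfy $0<\tau_m<\tau_r<\sqrt{1+\varepsilon^2}$; what you actually use ($\tau_l<0<\tau_m<\tau_r$ and the sign of $W''$ at each root) remains valid. Second, the connected component of $\{V\le V(\tau_r)\}$ containing $\tau_r$ is $[\tau_\ast,+\infty)$, not $[\tau_\ast,\tau_r]$, because $V=q\tau-W(\tau)$ is strictly decreasing to the right of its last critical point $\tau_r$; correspondingly the level set $\{H=V(\tau_r)\}$ also contains two unbounded branches escaping to $\tau\to+\infty$. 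The homoclinic loop is only the portion of that level set lying over $[\tau_\ast,\tau_r]$, i.e.\ the left-going stable/unstable branches of the saddle, and in the necessity direction one should note (as is immediate) that the right-going branches cannot be homoclinic, so the left branch reaching $\tau=0$ is the only candidate to exclude.
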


In particular, for the case $q =0$, the 3 distinct roots of
$W'(\tau)=q$ are easily seen to be $\tau_l=-\sqrt{1+\varepsilon^2},
\tau_m=0$ and $\tau_r=-\tau_l$. So $V (0,
0,\varepsilon)=-(2\varepsilon^2 + (1 + \varepsilon^2)^2)^2 < V
(\tau_r, 0, \varepsilon)=- (2\varepsilon^2)^2$. By continuity, we
have the following conclusion:
\begin{proposition}
There exists a constant $\eta > 0$ such that if $ - \eta < q\leq
0 $, then there exists no physical homoclinic orbit.
\end{proposition}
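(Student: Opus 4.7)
The plan is to invoke the comparison criterion of Proposition 8.14 and reduce the assertion to a continuity statement in the parameter $q$. By that proposition, nonexistence of a physical homoclinic orbit (for $-Q < q \leq 0$) is equivalent to the inequality $V(0,q,\varepsilon) \leq V(\tau_r(q),q,\varepsilon)$. So it suffices to show that this inequality, which is already known at $q=0$ by the explicit computation in the paragraph preceding the statement, persists on a full left-neighborhood of $q=0$.

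First I would observe that the left-hand side is trivial: $V(0,q,\varepsilon) = -W(0) = -(2\varepsilon^2+(1+\varepsilon^2)^2)^2$ is independent of $q$. Next I would establish continuity of $q \mapsto \tau_r(q)$ near $q=0$ via the implicit function theorem applied to $W'(\tau)-q = 0$. At $q=0$ the branch passes through $\tau_r(0)=\sqrt{1+\varepsilon^2}$, and a direct evaluation of $w_{33}$ at this point (using $\tau^2-1-\varepsilon^2=0$) yields $W''(\tau_r(0)) = 32\varepsilon^2(1+\varepsilon^2) > 0$, which is the nondegeneracy needed to obtain a smooth local branch; moreover by Proposition 8.11 no new zeros of $W'(\tau)-q$ appear for small $|q|$, so $\tau_r(q)$ remains the largest root. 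Consequently $q \mapsto V(\tau_r(q),q,\varepsilon) = q\tau_r(q) - W(\tau_r(q))$ is continuous at $q=0$.

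Combining the two observations with the strict inequality already displayed in the text, $V(0,0,\varepsilon) = -(2\varepsilon^2+(1+\varepsilon^2)^2)^2 < -(2\varepsilon^2)^2 = V(\tau_r(0),0,\varepsilon)$, continuity of the difference $q \mapsto V(\tau_r(q),q,\varepsilon) - V(0,q,\varepsilon)$ delivers some $\eta > 0$ for which
\[
V(0,q,\varepsilon) < V(\tau_r(q),q,\varepsilon) \quad \text{for all } q \in (-\eta,0].
\]
Applying Proposition 8.14 then concludes that no physical homoclinic orbit exists on this range, as desired.

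There is no serious obstacle: essentially all nontrivial content was carried out in the comparison-and-computation at $q=0$ just before the statement, and the only thing to verify is the smoothness of the local branch $\tau_r(q)$, which is a one-line check of $W''(\tau_r(0))\neq 0$. The only technical care required is to identify $\tau_r(q)$ as the correct (largest) root throughout the range in question, which is guaranteed by the two-critical-point picture of $W'$ established in Proposition 8.11.
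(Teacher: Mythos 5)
Your proposal is correct and follows essentially the same route as the paper, which simply notes that $V(0,0,\varepsilon)<V(\tau_r,0,\varepsilon)$ and invokes ``continuous dependence'' together with the root-counting and comparison propositions to propagate the strict inequality to small $q<0$. You merely make the continuity step explicit via the implicit function theorem and the computation $W''(\sqrt{1+\varepsilon^2})=32\varepsilon^2(1+\varepsilon^2)>0$, which is a useful (and correct) elaboration of the paper's terse argument; note only that the comparison criterion you cite is the proposition immediately preceding the statement, not the statement itself.
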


\begin{proof}
When $q = 0 $, $V (0, 0,\varepsilon)<V (\tau_r,
0, \varepsilon)$. Thus, by continuous dependence and Propositions 8.11 and
8.13, we have the relation $V(0)<V(m)$ holds when $q < 0 $ is small and the conclusion holds.
\end{proof}

Next, we study the existence of physical homoclinic waves when
$-Q<q<0$ is large. For this purpose, we first set $\varepsilon=0$
then proceed by a perturbation argument. When $\varepsilon=0$, the
corresponding elastic energy function and its derivatives are:
$$\tilde{W}(\tau)=(\tau^2-1)^4;$$
$$\tilde{W}'(\tau)=8\tau(\tau^2-1)^3;$$
$$\tilde{W}''(\tau)=8(\tau^2-1)^2(7\tau^2-1).$$
Note that $\tilde{W}''(\tau)=0$ has roots $\tau=\pm1,\pm\sqrt{1/7}$
(this can be easily computed).

For this potential
$\tilde{V}(\tau,q,\varepsilon=0):=q\tau-\tilde{W}(\tau)$, we need
$|q|<\tilde{W}'(-\sqrt{1/7})=8(\frac{6}{7})^3\sqrt{\frac{1}{7}}$ to
have a homoclinic wave. For physical ones, we need
$-8(\frac{6}{7})^3\sqrt{\frac{1}{7}}<q<0$. Consider the case
$q\rightarrow-8(\frac{6}{7})^3\sqrt{\frac{1}{7}}$ from the right, we
see the largest root $\tau_m$ of $\tilde{W}'(\tau)=q$ tends to
$\sqrt{1/7}$. Consequently,
$\tilde{V}(\tau=0,q,\varepsilon=0)\rightarrow-\tilde{W}(0)=-1$ and
the right local maximum value of
$\tilde{V}(\tau_m,q,\varepsilon=0)\rightarrow-8(\frac{6}{7})^3\sqrt{\frac{1}{7}}-\tilde{W}(\sqrt{1/7})<-1$.
By Proposition 8.11 and Proposition 8.13, we have:

\begin{proposition} For the compressible 1D model, assume $-Q<q<0$ ($Q$ as
in Proposition 8.11).
Then, when
$\varepsilon>0$ and $q+Q$ are small, we have a physical homoclinic
orbit.
\end{proposition}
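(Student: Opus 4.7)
The plan is to run a simple perturbation argument off the explicit $\varepsilon=0$ calculation sketched just before the statement, invoking Proposition 8.13 as the existence criterion. By that proposition, it suffices to produce a neighborhood of parameters $(\varepsilon,q)$ with $\varepsilon>0$, $-Q(\varepsilon)<q<0$, and $q+Q(\varepsilon)$ small, on which the strict inequality
\[
\Delta(q,\varepsilon) := V(0,q,\varepsilon) - V(\tau_r(q,\varepsilon),q,\varepsilon) > 0
\]
holds; here $\tau_r(q,\varepsilon)$ denotes the largest real root of $W'(\tau;\varepsilon)=q$, and $Q(\varepsilon)$ the positive critical value of $W'(\cdot\,;\varepsilon)$, perturbing from $Q_0 := 8(6/7)^3\sqrt{1/7}$ at $\varepsilon=0$.

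First I would verify strict positivity at the limiting corner $(\varepsilon,q)=(0,-Q_0)$. For $\tilde W(\tau) = (\tau^2-1)^4$ one has $V(0,q,0) = -\tilde W(0) = -1$ for every $q$. As $q\to -Q_0^+$ the two right roots of $\tilde W'(\tau)=q$ coalesce at the critical point $\tau=\sqrt{1/7}$ of $\tilde W'$, so the right-hand local maximum value of $\tilde V(\cdot,q,0)$ limits to $-Q_0\sqrt{1/7} - \tilde W(\sqrt{1/7}) = -8(6/7)^3/7 - (6/7)^4 = -3024/2401$, which is strictly less than $-1$. Hence $\Delta(-Q_0,0) = -1 + 3024/2401 = 623/2401 > 0$.

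Next I would propagate this inequality by continuity. The elastic potential $W(\tau;\varepsilon) = (2\varepsilon^2 + (\tau^2-1-\varepsilon^2)^2)^2$ is smooth in $(\tau,\varepsilon)$, and since $\tilde W'''(\sqrt{1/7})\neq 0$ the critical point of $\tilde W'$ at $\sqrt{1/7}$ is non-degenerate; the implicit function theorem then yields that $W'(\cdot\,;\varepsilon)$ has a nearby critical point for small $\varepsilon\geq 0$ with critical value $Q(\varepsilon)$ depending smoothly on $\varepsilon$. The main (mild) obstacle is controlling the largest root $\tau_r(q,\varepsilon)$ of $W'(\tau;\varepsilon)=q$ up to the boundary $q=-Q(\varepsilon)$, where two roots collide; this is handled by the standard continuity of roots of a real polynomial as a function of its coefficients, which gives that $\tau_r$ extends continuously to this boundary, and hence that $\Delta$ is continuous on the closed region $\{0\leq\varepsilon\leq\varepsilon_0,\ -Q(\varepsilon)\leq q\leq 0\}$. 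Strict positivity at the corner $(-Q_0,0)$ then propagates to an open neighborhood, on which Proposition 8.13 supplies the sought physical homoclinic orbit.
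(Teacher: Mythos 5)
Your proposal is correct and follows essentially the same route as the paper: evaluate $V(0)-V(\tau_r)$ in the degenerate corner $\varepsilon=0$, $q\to -Q_0^+$ (obtaining the same strict inequality, $-1 > -3024/2401$), and then propagate by continuity using the criterion of Proposition 8.13. The only difference is that you make explicit the continuity details the paper leaves implicit (the implicit function theorem for the critical value $Q(\varepsilon)$ via $\tilde W'''(\sqrt{1/7})\neq 0$, and continuous extension of $\tau_r$ to the boundary $q=-Q(\varepsilon)$), which is a welcome sharpening rather than a different argument.
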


\section{Time-evolutionary stability}\label{s:stability}
We conclude by discussing briefly the question of time-evolutionary
stability of elastic traveling waves with strain-gradient effects.

\subsection{Spectral vs. nonlinear stability}

A very useful observation regarding the earlier phenomenological
models
$ \tau_t-u_x=0$, $u_t+ dW(\tau)_x= b u_{xx}-d\tau_{xxx}$,
$b,d>0$ constant, for $1$-D shear flow, made by Schecter and Shearer in
\cite{SS}, was that for a wide range of $b$, $d$, specifically,
$d<b^2/4$, the system
can be transformed by the change of independent variable
$u\to \tilde u:= u-c\tau_x$, $c(b-c)=d$ to the fully parabolic system
$$
\begin{aligned}
 \tau_t-\tilde u_x&=c \tau_{xx},\\
 \tilde u_t+ dW(\tau)_x&= (b-c) \tilde u_{xx},
\end{aligned}
$$
thus allowing the treatment of nonlinear stability by standard parabolic
techniques, taking into account, for example, sectorial structure, parabolic
smoothing, etc.

Quite recently, this observation has been profoundly generalized by
M. Kotschote \cite{K}, who showed that a somewhat different
transformation in similar spirit may be used to convert elasticity
or fluid-dynamical equations with strain-gradient (resp.
capillarity) effects to quasilinear fully parabolic form, {\it in
complete generality}, not only to the cases $4d>b^2$ previously
uncovered for the phenomenological model, but to the entire class of
physical models considered here. For further discussion/description
of this transformation, see Appendix \ref{s:structure}.

This reduces the question of nonlinear stability to a standard format
already well studied.
In particular, it follows that (except possibly in nongeneric boundary cases
of neutrally stable spectrum) {\it nonlinear stability is equivalent to
spectral stability,} appropriately defined.
This follows for heteroclinic and homoclinic waves by the analysis
of \cite{HZ}, and for periodic waves by the analysis of
\cite{JZ}.\footnote{
The analysis of \cite{JZ} concerns modulational stability,
or stability with respect to localized
perturbations on the whole line; co-periodic stability may
be treated by standard semigroup techniques \cite{He}.
Spectral analyses of \cite{OZ,BYZ,PSZ} suggest that modulational
stability occurs rarely if ever for viscoelastic waves.
}
For precise definitions of the notions of spectral stability, we
refer the reader to those references; in the shock wave (heteroclinic
or homoclinic) case, see also the discussion of \cite{BLeZ}.
Spectral stability may be efficiently determined numerically by
Evans function techniques, as in for example \cite{BHRZ,BLeZ,BHZ,BJNRZ1,BJNRZ2}.
We intend to carry out such a numerical study
in a followup work \cite{BYZ}.

\subsubsection{Transformation to strictly parabolic form.}\label{s:structure}

We now show how to apply the approach of Kotschote in our context
and verify that we thereby obtain the structural properties needed
to apply the general theory of \cite{HZ}. So we shall in the
following verify the structural properties of the elastic model with
strain-gradient effect and related modified systems obtained by the
apporach of Kotschote \cite{K}. To be clear, we collect these
related systems. The original system is
\begin{equation}\label{B1}
\begin{cases}\tau_t -u_x=0\\
u_t +\sigma(\tau)_x=(b(\tau)u_x)_x - (d(\tau_x)
\tau_{xx})_x.\end{cases}
\end{equation}

Here $\sigma=-D_{\tau} W(\tau)$, $d(\cdot)=D^2\Psi(\cdot)$,
$d(\cdot)=Id$ and
 \be\label{b3} b(\tau)=\tau_3^{-1}\bp 1&0&0\\0&1&0\\0&0&2\ep\ee

Introducing the phase variable $z:=\tau_x$, we may write $(9.1)$ as
a quasilinear second-order system
\begin{equation}\label{9.2}
\begin{cases}\tau_t +z_x -u_x=\tau_{xx}\\
z_t = u_{xx}\\
u_t +\sigma(\tau)_x=(b(\tau)u_x)_x - (d(z) z_{x})_x.\end{cases}
\end{equation}

\begin{remark} This transformation, introduced in \cite{K}, is similar in
spirit to but more general than the one\footnote{A transformation
$(\tau, u)\to (\tau, u-c \tau_x)$ reducing the model to a parabolic
system of the same size.} introduced by Slemrod \cite{S1,S2,S3} and
used in \cite{OZ} for an artificial viscosity/capillarity model.
\end{remark}

We can slightly modify the above system in the second equation. Then
we have the following system

\begin{equation}\label{B3}
\begin{cases}
\tau_t +z_x -u_x=\tau_{xx}\\
z_t +z_x= u_{xx}+\tau_{xx}\\
u_t +\sigma(\tau)_x=(b(\tau)u_x)_x - (d(z) z_{x})_x.
\end{cases}
\end{equation}
If we write the above system (9.3) and (9.4) in matrix form
$$U_t+f(U)_x=(B(U)U_x)_x$$
using the variable $U:=\begin{pmatrix}\tau \\ z \\ u\end{pmatrix}$,
the corresponding matrix $B$ becomes:
$$\bp I_3&0_3&0_3\\ 0_3&0_3&I_3\\0_3 & -d(z) &b(\tau)\ep;$$
$$\bp I_3&0_3&0_3\\ I_3&0_3&I_3\\0_3 & -d(z) &b(\tau)\ep.$$
Also, the corresponding matrix $Df(U)$ for the two systems are:
$$\begin{pmatrix}
0_3 & I_3 & -I_3\\
0_3 & 0_3 & 0_3\\
D\sigma(\tau) & 0_3 & 0_3
\end{pmatrix};$$

$$\begin{pmatrix}
0_3 & I_3 & -I_3\\
0_3 & I_3 & 0_3\\
D\sigma(\tau) & 0_3 & 0_3
\end{pmatrix}.$$

\begin{proposition} (Strict parabolicity) Systems $(9.3)$ and $(9.4)$ are both strictly parabolic systems in the sense that the spectrum of $B$ have positive real parts.\end{proposition}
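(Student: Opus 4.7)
The plan is to exploit the block structure of $B$ in each case to reduce the eigenvalue problem for the full $9\times 9$ matrix to the spectral analysis of a quadratic matrix pencil of size $3$, and then to invoke the positivity of $b(\tau)$ and $d(z)$ to conclude.

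First I would observe that in both systems the first block-row of $B$ has the form $(I_3,\,0_3,\,0_3)$, so the characteristic polynomial factors immediately as
\begin{equation*}
\det(\mu I_9-B)=(\mu-1)^3\,\det\!\begin{pmatrix}\mu I_3 & -I_3\\ d(z) & \mu I_3-b(\tau)\end{pmatrix},
\end{equation*}
contributing a triple eigenvalue $\mu=1$ and leaving a $6\times 6$ block to analyze. This block is the same for systems $(9.3)$ and $(9.4)$ because the $(2,1)$ block of $B$ (which is $0_3$ in one case and $I_3$ in the other) is annihilated when the first block-row is zeroed out; so the two propositions will reduce to a single computation, which is the main simplification.

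Second, I would apply the Schur complement formula to the remaining $6\times 6$ determinant. For $\mu\neq 0$ one obtains
\begin{equation*}
\det\!\begin{pmatrix}\mu I_3 & -I_3\\ d(z) & \mu I_3-b(\tau)\end{pmatrix}=\mu^3\det\!\bigl(\mu I_3-b(\tau)+\mu^{-1}d(z)\bigr)=\det\!\bigl(\mu^2 I_3-\mu\, b(\tau)+d(z)\bigr),
\end{equation*}
a polynomial identity in $\mu$ that extends to $\mu=0$ by continuity. Thus the nonunit spectrum of $B$ coincides with the roots of the quadratic matrix pencil $P(\mu):=\mu^2 I_3-\mu\,b(\tau)+d(z)$.

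Third, I would use the explicit structure $d(z)=I_3$ and $b(\tau)=\tau_3^{-1}\operatorname{diag}(1,1,2)$ from \eqref{b3} to simultaneously diagonalize $b$ and $d$ in the standard basis. The characteristic equation then splits into three scalar quadratics
\begin{equation*}
\mu^2-\tau_3^{-1}\mu+1=0\quad(\text{twice}),\qquad \mu^2-2\tau_3^{-1}\mu+1=0,
\end{equation*}
whose roots have positive real parts by inspection: since $\tau_3>0$ by the physical constraint, the sum of the roots $b_i>0$ and the product equals $1>0$, so if the discriminant is nonnegative both roots are positive real, and otherwise the roots form a complex conjugate pair with real part $b_i/2>0$.

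The ``hard'' step here is really only the bookkeeping needed to identify the relevant pencil; once one is aware that $B$ is block lower triangular with the $b,d$ data concentrated in a $6\times 6$ sub-block with a symplectic-like form, the Schur reduction and the Routh--Hurwitz check are mechanical. One remark worth adding in the write-up: the reduction to $P(\mu)$ uses only that the first block-row of $B$ is $(I_3,0,0)$ and the lower-right $6\times 6$ sub-block has the companion-like shape $\bigl(\begin{smallmatrix}0 & I\\ -d & b\end{smallmatrix}\bigr)$, so the same argument would cover further admissible modifications of the second equation, not just $(9.3)$ and $(9.4)$, provided $b(\tau)$ remains positive definite and $d(z)$ remains positive definite.
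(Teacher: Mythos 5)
Your proposal is correct and follows essentially the same route as the paper: both factor out the triple eigenvalue $1$ from the first block-row, reduce the remaining $6\times 6$ block to the quadratic pencil $\det\bigl(\mu^2 I_3-\mu\,b(\tau)+d(z)\bigr)$, and conclude from $\tau_3>0$ that the resulting scalar quadratics $\mu^2-\tau_3^{-1}\mu+1$ and $\mu^2-2\tau_3^{-1}\mu+1$ have roots with positive real part. Your write-up is in fact slightly more careful than the paper's on two points it leaves implicit: why the $(2,1)$ block difference between $(9.3)$ and $(9.4)$ is irrelevant, and the Routh--Hurwitz-style justification that the roots lie in the open right half-plane.
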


\begin{proof}
Comparing the two matrices $B$ above, we know they have the same
spectrum. We prove this proposition for $\tau, z, u\in
\mathbb{R}^3$. The lower dimension cases becomes easier and the
computations are totally the same . Pick one of the $B's$, say
$$\bp I_3&0_3&0_3\\ 0_3&0_3&I_3\\0_3 & -d(z) &b(\tau)\ep.$$ To
compute the spectrum of $B$, consider the characteristic polynomial
$\det(\lambda I_9-B)=0$, which is

$$\det\bp (\lambda-1) I_3&0_3&0_3\\0_3&\lambda I_3&-I_3\\0_3 & +d(z) &\lambda-b(\tau)\ep=0.$$
Doing Laplace expansion and elementary column transformation , we
get
$$\det\{(\lambda-1)I_3\}\det\{(\lambda^{2}I_3-\lambda b(\tau)+d(z))\}=(\lambda-1)^3(\lambda^2-\frac{\lambda}{\tau_3}+1)^2(\lambda^2-\frac{2\lambda}{\tau_3}+1)=0.$$

From the first factor of the above degree 9 polynomial, we get three
equal root 1 which has positive real parts . The other 6 roots also
have positive real parts noticing that $\tau_3>0$.
\end{proof}

\begin{proposition} (Nonzero characteristic speeds) The corresponding first order systems of (C.4) has nonzero characteristic speed at $\tau$ where the matrix $D^2W(\tau)$ are strictly positive definite.\end{proposition}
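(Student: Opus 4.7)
The plan is to interpret the first-order reduction of the modified system \eqref{B3} as the hyperbolic part obtained by dropping the second-order terms on the right-hand sides, namely
\begin{equation*}
\begin{cases}
\tau_t + z_x - u_x = 0,\\
z_t + z_x = 0,\\
u_t + \sigma(\tau)_x = 0.
\end{cases}
\end{equation*}
Writing $U = (\tau, z, u)^\top$ and putting this in the form $U_t + Df(U)\, U_x = 0$, one reads off
\begin{equation*}
Df(U) = \begin{pmatrix} 0_3 & I_3 & -I_3 \\ 0_3 & I_3 & 0_3 \\ D\sigma(\tau) & 0_3 & 0_3 \end{pmatrix},
\end{equation*}
and the characteristic speeds are, by definition, the eigenvalues of $Df(U)$. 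The task reduces to showing these eigenvalues are nonzero whenever $D^2 W(\tau) > 0$.

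The key computation is to evaluate $\det(\lambda I_9 - Df(U))$. I would exploit the block-triangular structure visible after expanding along the middle block row: this row is $(0_3,\ (\lambda-1)I_3,\ 0_3)$, so a Laplace expansion along it yields
\begin{equation*}
\det(\lambda I_9 - Df(U)) = \det((\lambda-1)I_3)\,\det\!\begin{pmatrix} \lambda I_3 & I_3 \\ -D\sigma(\tau) & \lambda I_3 \end{pmatrix} = (\lambda-1)^3 \det\!\bigl(\lambda^2 I_3 + D\sigma(\tau)\bigr).
\end{equation*}
Since $\sigma = -D_\tau W$, we get $D\sigma(\tau) = -D^2 W(\tau)$, so the characteristic polynomial factors as
\begin{equation*}
(\lambda-1)^3 \det\!\bigl(\lambda^2 I_3 - D^2 W(\tau)\bigr).
\end{equation*}
Hence the nine characteristic speeds are $\lambda = 1$ (triple) and $\lambda = \pm\sqrt{\mu_j}$, where $\mu_1,\mu_2,\mu_3$ are the eigenvalues of $D^2 W(\tau)$. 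Under the hypothesis that $D^2 W(\tau)$ is strictly positive definite, all $\mu_j > 0$, so each of the eight roots $\pm\sqrt{\mu_j}$ is real and nonzero, and the value $\lambda=1$ is obviously nonzero.

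There is no real obstacle here; the argument is essentially the same block-matrix computation used in the strict parabolicity proposition just above. The substantive point, rather than any technical difficulty, is to observe why the modification from \eqref{9.2} to \eqref{B3} is needed: in \eqref{9.2} the middle equation $z_t = u_{xx}$ contributes a row of zeros to the flux Jacobian, producing $\lambda = 0$ as a triple root of the first-order symbol and hence violating nondegeneracy. Replacing $z_t = u_{xx}$ by $z_t + z_x = u_{xx} + \tau_{xx}$ injects the block $I_3$ into the middle position of $Df(U)$, which is exactly what shifts the triple zero eigenvalue to $\lambda = 1$ and restores the hyperbolicity needed to apply the general framework of \cite{HZ,JZ}.
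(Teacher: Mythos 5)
Your proposal is correct and follows essentially the same route as the paper: both compute the characteristic polynomial of the flux Jacobian $Df(U)$ of the modified system, expand along the middle block row to obtain $(\lambda-1)^3\det\bigl(\lambda^2 I_3 - D^2W(\tau)\bigr)$, and conclude that all characteristic speeds are nonzero (indeed real and nonzero) when $D^2W(\tau)$ is positive definite. Your closing observation about why the modification of the $z$-equation is needed to remove the zero eigenvalue is a useful addition but does not change the argument.
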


\begin{proof}
Again, we prove this for $\tau,z,u\in\mathbb{R}^3$. To prove the
corresponding first order system is non-characteristic, we consider
the spectrum of the matrix $\begin{pmatrix}0_3 & I_3 & -I_3\\ 0_3 &
I_3 & 0_3\\D\sigma(\tau) & 0_3 & 0_3\end{pmatrix}$. We get the
following system for the characteristic speed:

$$\det(\lambda I_9-Df(U))=\begin{pmatrix}\lambda I_3 & -I_3 & I_3\\
0_3 &(\lambda-1)I_3 & 0_3\\-D\sigma(\tau) & 0_3 & \lambda
I_3\end{pmatrix}=0.$$ By direct computation, we know:
\begin{equation}\det(\lambda
I_9-Df(U))=\det\Big((\lambda-1)I_3\Big)\det\Big(-D\sigma(\tau)-\lambda^2
I_3\Big)=0.\end{equation} It is easy to see we have three roots 1
which is not 0. The other roots satisfy the algebraic equation:
$\det\Big(-D\sigma(\tau)-\lambda^2
I_3\Big)=\det\Big(D^2W(\tau)-\lambda^2 I_3\Big)=0$. Hence the
proposition follows.
\end{proof}

\begin{proposition} (Same spectrum) For system $(9.4)$ and its first order system, the matrix $Df(U)$ and $B^{-1}Df(U)$ have the same spectrum.\end{proposition}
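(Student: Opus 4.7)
The plan is to show that $Df(U)$ and $B^{-1}Df(U)$ have identical characteristic polynomials, from which equality of spectra with multiplicities follows at once. Since $B$ will turn out to be invertible with $\det B = 1$, we have the identity
\[
\det(\lambda I_9 - B^{-1}Df(U)) = \det(B)^{-1}\det(\lambda B - Df(U)) = \det(\lambda B - Df(U)),
\]
so the task reduces to computing $\det(\lambda B - Df(U))$ and matching it with the formula for $\det(\lambda I_9 - Df(U))$ already established in the preceding proposition on nonzero characteristic speeds.

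First I would establish $\det B = 1$. The block row operation $R_2 \to R_2 - R_1$ (which does not change the determinant) transforms $B$ into a matrix whose middle block row is $(0, 0, I_3)$; expansion along that row, combined with the standing hypothesis $d(z) = \Id$, gives $\det B = \det(d(z)) = 1$ after sign bookkeeping. Applying the same elementary operation to $\lambda B - Df(U)$ produces a matrix whose middle block row is $(0, 0, (\lambda-1)I_3)$. A swap of block rows $2$ and $3$ (introducing a sign $(-1)^3 = -1$ from three disjoint transpositions of scalar rows) puts the matrix in block upper triangular form with respect to the splitting $9 = 6+3$, whose determinant factors as $\det\bigl((\lambda-1)I_3\bigr)$ times the $6 \times 6$ Schur-complement determinant
\[
\det \begin{pmatrix} \lambda I_3 & -I_3 \\ -D\sigma(\tau) & -\lambda\, d(z) \end{pmatrix} = \det\bigl(-\lambda^2 d(z) - D\sigma(\tau)\bigr),
\]
where the commuting-block determinant formula applies because $d(z) = \Id$ commutes with $D\sigma(\tau)$. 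Substituting $d = \Id$ and $D\sigma = -D^2 W$, and collecting all the sign contributions, yields
\[
\det(\lambda B - Df(U)) = (\lambda-1)^3\, \det\bigl(\lambda^2 I_3 - D^2 W(\tau)\bigr),
\]
which agrees with the factorization of $\det(\lambda I_9 - Df(U))$ recorded in the previous proposition. Equality of the two characteristic polynomials follows, and hence the two matrices share the same spectrum.

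The principal technical obstacle is careful sign-bookkeeping through the block row swaps and cofactor expansions, since the matching of the two characteristic polynomials is sensitive to an overall sign that must line up consistently on both sides of the comparison; errors at this stage give a spurious mismatch of the spectra by factors of $\pm 1$ or $\pm i$. A secondary point worth flagging is the tacit use of the commuting-block determinant identity $\det\begin{pmatrix} A & B \\ C & D \end{pmatrix} = \det(AD - BC)$, which requires $C$ and $D$ to commute; here this is automatic because $d(z)$ is a scalar multiple of the identity, and this is precisely the structural role played by the simplification $d(\cdot) = \Id$ adopted throughout the section. Under these assumptions the argument is a clean piece of block linear algebra with no further analytic input required.
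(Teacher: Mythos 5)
Your argument is correct, and it reaches the same characteristic polynomial $(\lambda-1)^3\det\bigl(\lambda^2 I_3-D^2W(\tau)\bigr)$ as the paper, but by a genuinely different route. The paper computes $B^{-1}$ explicitly, forms the product $B^{-1}Df(U)$ (which turns out to be block upper triangular, with $(\lambda-1)I_3$ coming from the lower-right block), and then expands $\det(\lambda I_9-B^{-1}Df(U))$ directly, matching the result against the factorization of $\det(\lambda I_9-Df(U))$ in the preceding proposition. You instead never invert $B$: you pass to the pencil via $\det(\lambda I_9-B^{-1}Df(U))=\det(B)^{-1}\det(\lambda B-Df(U))$, check $\det B=1$, and reduce $\det(\lambda B-Df(U))$ by block row operations and the commuting-block identity. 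Your route is somewhat more conceptual and more general --- it works for any invertible $B$ without needing a closed form for $B^{-1}$, and it makes transparent that the spectrum of $B^{-1}Df(U)$ is exactly the root set of the generalized eigenvalue problem $\det(\lambda B-Df)=0$, which is the natural object attached to the relaxation structure --- whereas the paper's explicit computation has the side benefit of exhibiting the useful triangular form of $B^{-1}Df(U)$ itself. Your sign bookkeeping is right; note only that the paper's displayed factorization $\det\bigl((\lambda-1)I_3\bigr)\det\bigl(-D\sigma(\tau)-\lambda^2 I_3\bigr)$ in the previous proposition carries an overall sign $(-1)^3$ relative to the monic polynomial $(\lambda-1)^3\det\bigl(\lambda^2 I_3-D^2W(\tau)\bigr)$ that you (correctly) obtain; since both expressions are set equal to zero, this does not affect the conclusion that the spectra coincide.
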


\begin{proof}
We prove this for the variables $\tau, z, u\in \mathbb{R}^3$. It is
easy to verify that
$$B^{-1}=\begin{pmatrix}
I_3 & 0_3 & 0_3\\
-b(\tau) & b(\tau) & -I_3\\
-I_3 & I_3 & 0_3
\end{pmatrix}.$$
Since
$$Df(U)=\begin{pmatrix}
0_3 & I_3 & -I_3\\
0_3 & I_3 & 0_3\\
D\sigma(\tau) & 0_3 & 0_3
\end{pmatrix},$$
we immediately get
$$B^{-1}Df(U)=\begin{pmatrix}
0_3 & I_3 & -I_3\\
-D\sigma(\tau) & 0_3 & b(\tau)\\
0_3 & 0_3 & I_3
\end{pmatrix}.$$
Considering the corresponding eigenvalue problem, we have:
$$\det\Big(\lambda I_9-B^{-1}Df(U)\Big)=\det\begin{pmatrix}
\lambda I_3 & -I_3 & I_3\\
D\sigma(\tau) & \lambda I_3 & -b(\tau)\\
0_3 & 0_3 & (\lambda-1)I_3
\end{pmatrix}=0.$$

Doing Laplace expansion and performing basic transformation, we get:
$$\det\Big(\lambda I_3-I_3 \Big)\det\Big(\lambda^2 I_3+D\sigma(\tau)
\Big)=\det\Big(\lambda I_3-I_3 \Big)\det\Big(\lambda^2
I_3-D^2W(\tau) \Big)=0,$$ which implies the conclusion by noticing
$(9.5)$.
\end{proof}

\subsection{Variational vs. time-evolutionary stability}

More fundamentally, perhaps, there is a relation between variational
stability of periodic waves and their time-evolutionary stability as
solutions of \eqref{2.1}. In particular, the energy functional that
we minimized in constructing periodic solutions is essentially the
self-same functional that defines the mechanical energy of the
system, a Lyapunov functional that decreases with the flow of
\eqref{2.1}; as we show below in Section 9.2.1. This gives a strong
link between the two notions of stability. Indeed, it can be used to
directly show that the periodic waves constructed as minimizers of
the associated variational problem are time-evolutionarily stable
with respect to co-periodic perturbations (Remark \ref{rmkC2}).

Moreover, as discussed further in \cite{BYZ}, the indirect spectral arguments
of \cite{Z2} on $1$-D heteroclinic and homoclinic waves extend to
to the general-dimensional and or periodic case, yielding the much
stronger result that variational stability in each of these contexts
is {\it necessary and sufficient} for time-evolutionary stability
(co-periodic stability, in the case of periodic waves, and variational
stability constrained by a prescribed mean).
Moreover, these arguments yield at the same time the curious fact that
unstable spectra of the linearized operator about the wave {\it must, if it
exists, be real}.
These properties give additional insight,
and additional avenues by which time-evolutionary stability
may be studied.

{\it In particular, this shows that
the waves we have constructed are the} (co-periodically) {\it stable ones.}
However, these are not necessarily the only stable waves, as we
did not construct all minimizers of the variational problem,
but only those with mean satisfying a nonconvexity condition.
Just recently (in particular, after the completion of the analysis
of this paper),
there has been introduced in \cite{PSZ} a different, more direct
argument showing equivalence of variational and time-evolutionary
stability, which yields at the same time concise conditions for
variational stability.
These yield in particular that the sharp condition for stability
is not the condition of nonconvexity of $W$ at $m$,
defined as the mean over one period of $\tau$, but rather
the ``averaged'' condition of nonconvexity of the
Jacobian with respect to $m$ of the mean over one period of $DW(\tau)$.
See \cite{PSZ} for further details.

\subsubsection{Calculus of Variation and relative mechanical
energy}\label{s:rel}

In this section, we discuss further the relation between variational
stability and time-evolutionary stability with respect to
co-periodic perturbations of periodic waves. It is well-known that
the physical foundation of calculus of variation is the principle of
least action (also known as Hamilton's principle, Maupertuis'
principle, see \cite{Lan, MW, PB} and the references therein). Hence
the functional in calculus of variation represents some sort of
``energy" accordingly.

To be self-contained, we recall the system \eqref{3.2} (which is
\eqref{C1} below now)
\begin{equation}\label{C1}
\begin{cases}\tau_t-u_x=0;\\
u_t+\sigma(\tau)_x=(b(\tau)u_x)_x-(d(\tau_x)\tau_{xx})_x
\end{cases}
\end{equation}

Here the functions $b,d$ are the same as before. From [BLeZ] we know
the following is the associated entropy in the sense of hyperbolic
system of conservation laws:
\begin{equation}\label{C2}
\eta(\tau,u)=\frac{u^2}{2}+W(\tau)+\Psi(\tau_x).
\end{equation}

Consider the following mechanical energy for a given positive period
$T$

\begin{equation}\label{C3}
\mathcal{E}=\int_0^T\eta(\tau,u)dx=\int_0^T\frac{u^2}{2}+W(\tau)+\Psi(\tau_x)\,dx
\end{equation}

In any case, the mechanical energy $\mathcal{E}$ decreases along the
flow for periodic boundary condition, with dissipation as follows
\begin{align*}
\frac{d}{dt}\mathcal{E}(\tau,u)=&\int_0^T
\big(uu_t+DW(\tau)\tau_t+D\Psi(\tau_x)\tau_{xt}\big)dx\\
=&\int_0^T -u_x b(\tau)u_x+u_x d(\tau_x)\tau_{xx}+u_x
\sigma(\tau)+DW(\tau)u_x+D\Psi(\tau_x)u_{xx})dx\\
=&-\int_0^T u_x b(\tau)u_x dx\leq 0.
\end{align*}
Hence we see immediately that periodic traveling wave solutions must
have $u\equiv\mbox{constant vector}$, from which we then find easily
the speed $s=0$ in view of the relation $u_t=-su_x=0$ and
$\tau_t-u_x=0$.\\

Next, we adopt the periodic Sobolev space framework to discuss the
relation between the least action functional $(6.5)$ and the
\textit{relative mechanical energy}
\begin{equation}\label{C4}
\mathcal{E}(\tau,u;<\tau>,
<u>):=\mathcal{E}(\tau,u)-\mathcal{E}(<\tau>,
<u>)-D\mathcal{E}(<\tau>, <u>)\cdot(\tau, u),
\end{equation}
where $<\tau>:=\frac{1}{T}\int_0^T \tau(x)\,dx$ as in physics
literature and similarly for $u$.

After a brief computation, we get that the relative mechanical
energy (``relative entropy" in the sense of hyperbolic systems of
conservation law) is given by
\begin{align*}
\mathcal{E}(\tau,u; <\tau>,<u>)=&\int_0^T
\Psi(\tau_x)+W(\tau)-DW(<\tau>)\cdot\tau-W(<\tau>)\,dx\\+&\int_0^T\frac{1}{2}|u|^2-\frac{1}{2}|<u>|^2-<u>\cdot
u\,dx.
\end{align*}

From the system \eqref{3.2} we know that the structure is preserved
under the transformation $u\rightarrow u+c$ where $c$ is an
arbitrary constant vector. Hence without loss of generality, we can
let $u\equiv 0$. Hence we get the following expression by further
choosing $<u>=m$ as before
\begin{align*}
\mathcal{E}(\tau,0; 0,m)=&\int_0^T
\Psi(\tau_x)+W(\tau)-DW(m)\cdot\tau-W(m)\,dx\\
=&\int_0^T \frac{1}{2}|\tau_x|^2+W(\tau)-DW(m)\cdot\tau-W(m)\,dx
\end{align*}

Defining the translated variable $v(x)=\tau(x)-m$, we get the
relation between the least action functional and the relative
entropy

$$\mathcal{E}(v,0; 0,m)=\mathcal{I}(v)+\mbox{constant},$$
where the constant is given by $T(DW(m)\cdot m)$.

\begin{remark} The relative entropy in the sense of hyperbolic system of conservation law is a rather common construction,
meant to be stationary about the reference configuration (in our
case $<\tau>=m$ and $<u>=0$).
\end{remark}

\begin{remark}\label{rmkC2}
The discussion above also sheds some light on the relation between
time-evolutionary properties and variational structure of the
Hamiltonian structure of our problem (See \cite{Z2} for further
discussions). By the method of \cite{GSSI, GSSII}, we see that the
mean-constraint minimizers we constructed are necessarily
\textit{stable in the time-evolutionary sense} with respect to
co-periodic perturbation if they are stable in the variational
sense.
\end{remark}

\appendix
\section{Appendix: Phase-transitional elasticity}\label{s:phase_app}

In this appendix, we collect some computations for the
phase-transitional elasticity (\cite{BLeZ},\cite{FP} and references
therein).\footnote{For the convenience of the reader, we note that the the
vector $(a_1,a_2,a_3)$ in \cite{BLeZ} corresponds to
$(\tau_1,\tau_2,\tau_3)$ here.}
$$
W(F):= \Big| F^T F -C_-\big|^2 \Big| F^T F -C_+\big|^2,
$$
where
$$
C_\pm= (F^T F)_\pm:=
\begin{pmatrix}
1 & 0 & 0\\
0 & 1 & \pm \eps\\
0 & \pm \eps & 1+\eps^2\\
\end{pmatrix},
\qquad F_\pm=
\begin{pmatrix}
1 & 0 & 0\\
0 & 1 & \pm \eps\\
0 & 0 & 1\\
\end{pmatrix}.
$$
Evidently, $W$ is minimized among planar deformations at the two
values $ A_\pm=
\begin{pmatrix}
0 & 0 & 0\\
0 & 0 & \pm \eps\\
0 & 0 & 0\\
\end{pmatrix}.
$ Indeed, we have then
$$
W(F)= \Big(2\tau_1^2 + 2(\tau_2-\eps)^2 +(|\tau|^2-1-\eps^2)^2
\Big)\Big(2\tau_1^2 + 2(\tau_2+\eps)^2 +(|\tau|^2-1-\eps^2)^2 \Big),
$$
where, as in \cite{BLeZ}, $ A=
\begin{pmatrix}
0 & 0 & \tau_1\\
0 & 0 & \tau_2\\
0 & 0 & \tau_3\\
\end{pmatrix}$
and $F=
\begin{pmatrix}
1 & 0 & \tau_1\\
0 & 1 & \tau_2\\
0 & 0 & \tau_3\\
\end{pmatrix},
$
so that
\be\label{ftf}
F^TF= \begin{pmatrix}
1 & 0 & \tau_1\\
0 & 1 & \tau_2\\
\tau_1 & \tau_2 & |\tau|^2\\
\end{pmatrix}
\ee
and
\be\label{ftfcalc} W(F)= \Big|\begin{pmatrix}
0 & 0 & \tau_1\\
0 & 0 & \tau_2+\eps\\
\tau_1 & \tau_2+\eps & |\tau|^2-1 -\eps^2\\
\end{pmatrix}
\Big|^2 \Big|\begin{pmatrix}
0 & 0 & \tau_1\\
0 & 0 & \tau_2-\eps\\
\tau_1 & \tau_2-\eps & |\tau|^2-1 -\eps^2\\
\end{pmatrix}
\Big|^2
\ee

\be\label{ftf2} (F^TF-C_{\pm})^2= \begin{pmatrix}
\tau_{1}^2 & * & *\\
* & (\tau_2 \mp\varepsilon)^2 & *\\
* & * &
\tau_{1}^{2}+(\tau_{2}\mp\varepsilon)^{2}+(|\tau|^2-1-\varepsilon^2)^2\\
\end{pmatrix}.
\ee

\section*{Acknowledgement.} The author is grateful to his thesis
advisor, Professor Kevin Zumbrun,  for the illuminating discussions,
precious guidance and encouragement during this work. He would like
also to thank Blake Barker for helpful discussions.

\end{document}